\newcommand{\cF}{{\mathcal F}}
\newcommand{\RR}{{\mathbb R}}
\newcommand{\NN}{{\mathcal N}_{\alpha,p}}
\newcommand{\rde}{\mathbb{R}^d}
\newcommand{\beq}{\begin{equation}}
\newcommand{\beqn}{\begin{equation*}}
\newcommand{\eeq}{\end{equation}}
\newcommand{\eeqn}{\end{equation*}}
\numberwithin{equation}{section}
\newtheorem{theorem}{Theorem}[section]
\newtheorem{lemma}[theorem]{Lemma}
\newtheorem{remark}[theorem]{Remark}
\newtheorem{prop}[theorem]{Proposition}
\begin{document}
\title[Stochastic wave equations with superlinear coefficients]
{Global solutions to stochastic wave equations with superlinear coefficients }

\author[A. Millet]{ Annie Millet}
\address{SAMM, EA 4543,
Universit\'e Paris 1 Panth\'eon Sorbonne, 90 Rue de
Tolbiac, 75634 Paris Cedex, France {\it and} LPSM, UMR 8001, 
  Universit\'es Paris~6-Paris~7} 
\email{annie.millet@univ-paris1.fr} 

\author[M. Sanz-Sol\'e]{Marta Sanz-Sol\'e}
\address{Department of Mathematics and Informatics,
Barcelona Graduate School of Mathematics\\
University of Barcelona, Gran Via de les Corts Catalanes 585,
E-08007 Barcelona, Spain\\
}
\email{marta.sanz@ub.edu}

\subjclass[2010]{Primary: 60H15, 60G60 Secondary: 35R60, 60G17 } 

\keywords{Stochastic wave equation, superlinear coefficients, 
global well-posedness}

\begin{abstract}
 We prove existence and uniqueness of a random field solution $(u(t,x); (t,x)\in [0,T]\times \RR^d)$
 to a stochastic wave equation in dimensions $d=1,2,3$
 with diffusion and drift coefficients of the form $|z| \big( \ln_+(|z|) \big)^a$ for some $a>0$. 
 The proof relies on a sharp analysis of moment estimates
of time and space increments of the corresponding stochastic wave equation with globally Lipschitz coefficients. 
We give examples of  spatially correlated Gaussian driving noises where the results apply.
\end{abstract}
\maketitle

\section{Introduction}\label{si} 

In this paper, we study the  stochastic wave equation in spatial dimension $d\in\{1,2,3\}$, with a multiplicative noise $W$, 
\begin{align}
\label{wave-1}
&\frac{\partial^2}{{\partial t}^2} u(t,x)- \Delta_x u(t,x) = b(u(t,x)) + \sigma(u(t,x)) \dot{W}(t,x),\quad  (t,x)\in(0,T]\times \RR^d,\notag\\
&u(0,x)=u_0(x), \quad \frac{\partial}{\partial t} u(0,x)=v_0(x), \quad x\in \RR^d.							
\end{align}
The choice of $\dot W$ depends on the dimension $d$. First, we consider the case $d=1$ with space-time white noise. 
Then, we consider  the dimensions $d = 2,3$ with a noise white in time and coloured in space. 
The initial conditions $u_0$ and $v_0$ are real-valued functions. 
The coefficients $b, \sigma : \RR\to \RR$ are locally Lipschitz functions such that, for $|z|\to\infty$,
\beq
\label{super-coeff}
|b(z)| \le \theta_1 + \theta_2 |z|\left(\ln |z|\right)^\delta,\  |\sigma(z)| \le \sigma_1 + \sigma_2 |z|\left(\ln |z|\right)^a, 
\eeq
where $ \theta_i, \sigma_i\in\RR_+ $, $i=1,2$, $\theta_2, \sigma_2 > 0$, $\delta, a >0$. 

We are interested in studying conditions ensuring global existence of a random field solution to \eqref{wave-1}, 
that is, the existence of a stochastic process $\left(u(t,x), (t,x)\in[0,T]\times \RR^d\right)$ satisfying 
\begin{align}
\label{n3}
u(t,x) & = [G(t) \ast v_0](x)  + \frac{\partial}{\partial t} \big[ G(t)\ast u_0\big] (x) + \int_0^t ds\, [G(s)\ast b(u(t-s,\cdot))](x)\notag\\
& + \int_0^t \int_{\RR^d} G(t-s,x-y) \sigma(u(s,y)) W(ds,dy),\quad {\text{a.s.}}
\end{align}
either for all $(t,x)\in[0,T]\times \RR^d$ or $(t,x)\in [0,T]\times D$, with $D\subset \RR^d$ bounded.
In \eqref{n3}, $G(t)$, $t>0$, is the fundamental solution to the wave operator, the notation ``$\ast$" denotes the convolution in the space variable, and the stochastic integral is defined for example in \cite{Dal}.


It is a well-known phenomenon in PDEs that if the coefficients are superlinear, blow-up may occur 
(see for instance \cite{G-V-2002}, and \cite[Section X.13, p. 293]{R-S}). For parabolic SPDEs, 
there is an extensive literature devoted to the study of blow-up phenomena.
We refer the reader to \cite{DKZ} for a  sample of references. There are however less results on stochastic wave equations. To the best of our knowledge, existence or absence of blow-up has been studied so far in the setting of functional-valued solutions, rather than for random field solutions, and mostly but not only, with strong conditions on the space covariance (see e.g. \cite{Ch}, \cite{Ond1}, \cite{MiMo}).  A quite general setting is considered in \cite{M-QS-2012}, where, 
  existence (but not uniqueness) of functional-valued global solution is proved. 


Our research is motivated by \cite{DKZ}, on the parabolic SPDE 
\beq
\label{heat-1}
\frac{\partial}{{\partial t}} u(t,x)- \frac{\partial^2}{{\partial x}^2}u(t,x) = b(u(t,x)) + \sigma(u(t,x)) \dot{W}(t,x),\quad (t,x)\in (0,T]\times (0,1),
\eeq
$u(0,x)=u_0(x)$, $x\in[0,1]$, with vanishing Dirichlet boundary conditions 
and locally Lipschitz coefficients such that, as $|z|\to\infty$,
$|b(z)| = O(|z|(\ln|z|)), \ |\sigma(z)| = o\left(|z|(\ln|z|)^{1/4}\right)$.
One of the main results in \cite{DKZ} is the existence 
of a unique global random field solution to \eqref{heat-1} on $\mathcal{C}(\RR_+\times [0,1])$. This solution satisfies 
$\sup_{(t,x)\in[0,T]\times[0,1]}|u(t,x)| < \infty$, a.s., for any $T>0$.
If in equation \eqref{heat-1}, $\sigma$ is constant and $|b(z)|\ge |z|(\ln |z|)^{1+\varepsilon}$ when $|z|\to\infty$, 
with $\varepsilon$ arbitrarily close to zero, Bonder and Groisman \cite{B-G-2009} prove that blow-up occurs in finite time $t>0$. 
The results in \cite{DKZ} imply that this condition on $b$ is sharp. 

The main results of this work are Theorem \ref{wp-d=1} and Theorem \ref{wp-d}, relative to the two type of noises considered in the paper. 
Two scenarios are considered: (i) we restrict the spatial domain to a bounded set $D$; (ii) the initial values have compact support and $b(0)=\sigma(0)=0$. (see Section \ref{s-app} for details). Loosely formulated, we prove:
\smallskip

{\em If the initial conditions satisfy some H\"older continuity properties, the coefficients are such that \eqref{super-coeff} holds (see  condition {\bf(Cs)} in Section \ref{s2}), and $b$ dominates $\sigma$ (see  
conditions {\bf(C1)}, {\bf(Cd)} in Sections \ref{s2} and \ref{s3}, respectively), then a global random field solution to \eqref{n3} exists.}
\smallskip

Our approach follows the $L^\infty$-method of \cite{DKZ}, nevertheless, we do not use comparison theorems concerning monotony of coefficients  
since they do not hold for the wave equation.
The main task consists in establishing qualitative sharp upper bounds on $E\left(\sup_{(t,x)\in K}|u(t,x)|^p\right)$,
for some range of values of $p$, when the coefficients are {\em globally Lipschitz} and
$K$ is compact subset of $\RR_+\times \RR^d$.
 Such upper bounds depend on the value at the origin and the Lipschitz constants of the coefficients $b$ and $\sigma$ 
 (see Propositions \ref{Kol-1d} and \ref{Kol-d}, and the notation \eqref{c-L}). These bounds  
 are obtained from $L^p$-estimates of increments in time and in space of the process $(u(t,x))_{(t,x)}$ 
  (see Propositions \ref{s2-2-p1} and \ref{s3-3-p6}) via a version of Kolmogorov's theorem (\cite[Theorem A.3.1]{Da-SS-Book}).
Why is this important?  Existence of solutions to equations with locally Lipschitz coefficients is often proved by
transforming the coefficients into globally Lipschitz functions, using truncation. 
With a classical argument, involving an increasing sequence of stopping times $(\tau_N)_N$, 
if $\tau_N\uparrow \infty$ a.s., then existence of global solution follows. 
Let $u_N$ denote the random field solution to \eqref{n3} with truncated (by $N$) coefficients $b^N$, $\sigma^N$ (see \eqref{b_N}). In our case, a sufficient condition for $\tau_N\uparrow \infty$ to hold (a.s.) is
\beq
\label{s0.1}
E\Big(\sup_{(t,x)\in K}|u_N(t,x)|^p\Big) = o(N^p).
\eeq
We prove \eqref{s0.1} in the two scenarios described above, thereby deducing absence of blow-up. 

In Section \ref{s2}, we consider the case $d=1$ and space-time white noise. 
The simplicity of this case allows to better highlight the approach. In Section \ref{s3}, we deal with the case $d = 2,3$.  
Since we are interested in random field solutions, in contrast with the case $d = 1$, we cannot take a space-time white noise.
Instead, we consider a class of Gaussian noises white in time and coloured in space, for which a well developed stochastic integral theory exists 
 (see e.g. \cite{Dal}, \cite{Da-SS-Book}). In comparison with Section \ref{s2},  the arguments and computations are more difficult; 
 they are inspired by the approach to sample path regularity of the random field solution of \eqref{n3} for $d=3$ given in \cite{DSS-Memoirs} 
 and \cite{HHN}. 
Section \ref{s4} provides several examples of covariance densities where the results of the paper apply. Finally, in Section \ref{s-app}, we give the background on the two settings for the wave equation considered in the paper. 
\smallskip

We end this introduction with some remarks. 
Consider the case where $b$ and $\sigma$ are globally Lipschitz functions. From the first statement of Proposition \ref{s3-3-p6} (see 
\eqref{s3-3.72bis}), we deduce the existence of a version of the process $(u(t,x))_{(t,x)}$ with locally H\"older-continuous sample paths,
 jointly in $(t,x)$. 
 Thus, for the class of spatial covariances considered in Section \ref{s3}, this gives a unified approach to sample path regularity of the stochastic 
 wave equation when $d= 2,3$. Related results are in 
\cite{MiSS} for $d=2$, and  \cite{DSS-Memoirs}, \cite{HHN} for $d=3$.

Without much additional effort, the results of this paper can be extended to equation \eqref{n3} with coefficients $b(t, x; u(t,x))$ and $\sigma(t, x; u(t,x))$. 
 
\section{Preliminaries and notations}
\label{s1}

We recall that for $d=1,2$ and for any fixed $t>0$, the fundamental solution $G(t)$ to the partial differential operator
 $\frac{\partial^2}{{\partial t}^2} - \Delta_x$,  is a function. More precisely,
\beq
G(t,x) = \begin{cases}
\frac{1}{2} \, 1_{\{|x|<t\}}, & x\in \RR, \label{Green-1}\\
\frac{1}{2\pi}\frac{1}{\sqrt{t^2-|x|^2}}1_{\{|x|<t\}}, &  x\in \RR^2, 
\end{cases}
\eeq
while for $d=3$, 
\beq
\label{Green-3}
G(t,dx) =  \frac{1}{4\pi t}\ \sigma_t(dx), \ x\in \RR^3,
\eeq
where $\sigma_t(dx)$ denotes the uniform surface measure on the sphere centred at zero and with radius $t$, 
(see e.g. \cite[Ch. 5]{Folland}). 

 
 Recall  that, for any $d\ge 1$, the Fourier transform of $G(t, \cdot)$ is (see \cite[p. 49]{treves})
\beq
\label{i2}
\cF G(t,\cdot)(\zeta) = \int_{\RR^d} e^{-ix\cdot \zeta} G(x)\ dx = \frac{\sin(t|\zeta|)}{|\zeta|}.
\eeq

We will write $G(t, x-dy)$ to denote the translation by $-x$ of the measure $G(t, dy)$
 in the distribution sense (see e.g. \cite[p. 55]{Schwartz}).
 
We will often write \eqref{n3} in the compact form
\begin{equation}
\label{sol-1}	
u(t,x)=\sum_{i=0}^2 I_i(t,x), \qquad t\in [0,T], \; x\in \RR^d, 
\end{equation} 
where 
\begin{align}
\label{decom}
I_0(t,x) = & [G(t) \ast v_0](x)  + \frac{\partial}{\partial t} \big[ G(t)\ast u_0\big] (x),\notag \\
I_1(t,x) =  &\int_0^t \! ds\, [G(s)\ast b(u(t-s,\cdot))](x),\notag\\
 I_2(t,x) = & \int_0^t \int_{\RR^d} G(t-s,x-dy)  \sigma(u(s,y)) {W}(ds,dy). 
\end{align}
\smallskip



\noindent {\em Notations}
\smallskip

 As mentioned in the introduction, we assume first
 that the coefficients of \eqref{n3}, $b$ and $\sigma$, are globally Lipschitz continuous functions. 
Therefore, we have
\begin{equation} 			
\label{c-L}
|b(z)| \leq c(b)+L(b) |z|,\quad  |\sigma(z)| \leq c(\sigma) + L(\sigma) |z|, \ z\in \RR,
\end{equation}
with $c(b) = |b(0)|$, $c(\sigma)=|\sigma(0)|$ and $L(b)$, $L(\sigma)$, the Lipschitz constants of $b$ and $\sigma$, respectively.

Let $\Phi : \Omega\times [0,T] \times \RR^d \to \RR$ be a jointly measurable random field.    
For fixed $\alpha >0$, $p\in [2,\infty)$, we define the family of seminorms
\begin{equation}
\label{norm}
{\mathcal N}_{\alpha,p}(\Phi) : = \sup_{t\ge 0} \, \sup_{x\in\RR^d} \; e^{-\alpha t} \|\Phi(t,x)\|_{p},
\end{equation}
 where $\|\; \cdot \; \|_p$ denotes the norm in $L^p(\Omega)$.
 
For $\phi:\RR \to \RR$, set $\|\phi\|_\infty = \sup_{x\in {\RR}} |\phi(x)|$ and, for $R\ge 0$, $\Vert \phi\Vert_{\infty,R} = \sup_{|x|\le R}|\phi(x)|$. 
For $\gamma\in(0,1)$, we define
\beq
\label{n1}
\Vert\phi\Vert_{\gamma} = \sup_{x\ne y}\frac{|\phi(x)-\phi(y)|}{|x-y|^{\gamma}}.
\eeq
 
Except if specified otherwise, $C, \bar C, \tilde C, c, \ldots$ are positive and finite constants that may change  throughout the paper,
 and $C(a), \bar C(a)$, etc.,  denote positive finite constants depending on the parameter $a$. 
 
\section{The stochastic wave equation in dimension one} \label{s2}  
In this section, we consider the stochastic wave equation \eqref{n3} for $d=1$, with a space-time white noise $W$ and coefficients satisfying the superlinear growth condition \eqref{super-coeff}.
 The study goes through several steps developed in the next subsections.
 \subsection{Qualitative moment estimates} \label{s2-1}
We assume that the coefficients of \eqref{n3}, $b$ and $\sigma$, are globally Lipschitz continuous functions therefore satisfying \eqref{c-L}. We also suppose that $L(b)$ and $L(\sigma)$ are strictly positive. The goal is to obtain upper bounds on $\sup_{x\in\RR}\Vert u(t,x)\Vert_p$ in terms of the constants $c(b)$, $c(\sigma)$, $L(b)$, $L(\sigma)$ for some range of values of $p$. This will be done using the approach of \cite[Chapter 5]{Kho} for the stochastic heat equation (see also \cite{DKZ}). 
\smallskip

By the definition of $I_0(t,x)$ and $G$ given in \eqref{decom} and \eqref{Green-1}, respectively, we have
\begin{align}
 \label{IC-1d}
I_0(t,x) = & 
\frac{1}{2} \int_{x-t}^{x+t}  v_0(y) dy + \frac{1}{2} \big( u_0(x-t)+u_0(x+t)\big).
\end{align} 
From this equality, we deduce
\begin{equation}
 \label{IC-1}
\sup_{x\in \RR} |I_0(t,x)| \leq t \|v_0\|_{\infty} + \|u_0\|_\infty.
\end{equation}
Clearly, if $u_0$, $v_0$ are bounded  functions then $\sup_{x\in \RR} |I_0(t,x)|<\infty$.

\begin{prop}
\label{Proposition II.3}  
(\cite[Proposition II.3]{CaNu})
Assume that the function $(t,x)\mapsto I_0(t,x)$ is continuous and $\sup_{x\in \RR} |I_0(t,x)|<\infty$. Suppose that $b$ and $\sigma$ are globally Lipschitz continuous functions.
Then \eqref{n3} has a unique random field solution 
 $\big(u(t,x); (t,x)\in[0,T]\times \RR\big)$. This solution satisfies
  \beqn
  \sup_{(t,x)\in[0,T]\times\RR}\Vert u(t,x)\Vert_p <\infty,\quad {\text{for any}} \quad p\in[1,\infty).
 \eeqn
\end{prop}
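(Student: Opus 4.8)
The plan is a standard Picard iteration combined with a fixed-point argument, carried out in the $L^p(\Omega)$-valued setting with the weighted seminorm $\NN$ of \eqref{norm}. I would treat first the range $p\in[2,\infty)$; the case $p\in[1,2)$ then follows from Jensen's inequality on the finite measure space $\Omega$, since $\|u(t,x)\|_p\le\|u(t,x)\|_2$. Because the conclusion only concerns $t\in[0,T]$, it suffices to construct, for each fixed $T$ and $p\ge2$, a solution on $[0,T]\times\RR$ with $\NN(u)<\infty$ for some $\alpha=\alpha(p,T)$; then $\sup_{(t,x)\in[0,T]\times\RR}\|u(t,x)\|_p\le e^{\alpha T}\NN(u)<\infty$, which is the asserted bound.

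The analytic ingredients specific to $d=1$ are the two elementary identities for the kernel \eqref{Green-1}, namely $\int_\RR G(s,x-y)\,dy=s$ and $\int_\RR G(s,x-y)^2\,dy=s/2$, both finite; in particular $(s,y)\mapsto G(t-s,x-y)$ is square-integrable on $[0,t]\times\RR$, so for a predictable integrand of class $\NN$ the stochastic integral $I_2$ in \eqref{decom} is well defined in the sense of \cite{Dal}. Define $u^{(0)}=I_0$ and $u^{(n+1)}=\sum_{i=0}^{2}I_i^{(n)}$, where $I_1^{(n)},I_2^{(n)}$ are the drift and noise terms built from $u^{(n)}$. For the drift term, Minkowski's inequality and the growth bound in \eqref{c-L} give
\begin{equation*}
\|I_1^{(n)}(t,x)\|_p\le\int_0^t s\,\Big(c(b)+L(b)\,\sup_{y\in\RR}\|u^{(n)}(t-s,y)\|_p\Big)\,ds ,
\end{equation*}
while for the noise term the Burkholder--Davis--Gundy inequality followed by Minkowski's inequality in $L^{p/2}(\Omega)$ and the bound on $\sigma$ from \eqref{c-L} give
\begin{equation*}
\|I_2^{(n)}(t,x)\|_p^2\le C_p\int_0^t\frac{t-s}{2}\,\Big(c(\sigma)+L(\sigma)\,\sup_{y\in\RR}\|u^{(n)}(s,y)\|_p\Big)^2\,ds .
\end{equation*}
Multiplying by $e^{-\alpha t}$, bounding $\|u^{(n)}(r,y)\|_p\le e^{\alpha r}\NN(u^{(n)})$, and using $\sup_{t\ge0}t^2e^{-\alpha t}\to0$, $\int_0^\infty re^{-\alpha r}\,dr=\alpha^{-2}$ and $\int_0^\infty re^{-2\alpha r}\,dr=(2\alpha)^{-2}$, one obtains a recursive estimate $\NN(u^{(n+1)})\le A_\alpha+\Theta_\alpha\,\NN(u^{(n)})$ with $A_\alpha<\infty$ (finite because $\NN(I_0)<\infty$, which the continuity and boundedness hypothesis on $I_0$ together with \eqref{IC-1} guarantee) and $\Theta_\alpha\to0$ as $\alpha\to\infty$. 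Fixing $\alpha$ with $\Theta_\alpha<1$ yields $\sup_n\NN(u^{(n)})<\infty$; in particular each $u^{(n)}$ is well defined.

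Running the same computation on the differences $u^{(n+1)}-u^{(n)}$ — now the constants $c(b),c(\sigma)$ drop out and one uses the Lipschitz property of $b,\sigma$ — gives $\NN(u^{(n+1)}-u^{(n)})\le\Theta_\alpha\,\NN(u^{(n)}-u^{(n-1)})$ with the same $\Theta_\alpha<1$. Hence $(u^{(n)})_n$ is Cauchy for $\NN$; its limit $u$ is predictable with $\NN(u)<\infty$, and passing to the limit in the recursion (the contraction estimate also forces $I_i^{(n)}\to I_i$) shows that $u$ solves \eqref{n3}. Uniqueness within the class $\{\NN<\infty\}$ is immediate from the contraction estimate applied to the difference of two solutions, and a routine localization argument removes the a priori restriction $\NN<\infty$ from the uniqueness statement. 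Combining with the first paragraph gives $\sup_{(t,x)\in[0,T]\times\RR}\|u(t,x)\|_p<\infty$ for all $p\in[1,\infty)$.

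I expect the only genuinely delicate point to be the $L^p$-estimate of the stochastic integral $I_2^{(n)}$: one must combine BDG with Minkowski's inequality in $L^{p/2}(\Omega)$ so that the $L^p(\Omega)$-norm passes inside the space-time integral against $G^2$, and then verify that the resulting time kernel $e^{-2\alpha t}\int_0^t(t-s)e^{2\alpha s}\,ds=\int_0^t re^{-2\alpha r}\,dr$ can be made arbitrarily small, uniformly in $t\ge0$, by taking $\alpha$ large. Everything else — the drift estimate, the Cauchy/contraction bookkeeping, and the reduction of $p\in[1,2)$ to $p=2$ — is routine; and the continuity and boundedness of $I_0$ is precisely what is needed both to start the iteration and to ensure $A_\alpha<\infty$.
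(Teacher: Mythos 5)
Your proposal is correct: the paper does not prove this proposition at all (it is quoted from \cite{CaNu}), and your Picard-iteration argument in the exponentially weighted seminorm $\NN$ is exactly the standard route taken in that reference and mirrored by the paper's own estimates in Proposition \ref{lem1.1}, using the same ingredients ($\int_\RR G(s,y)\,dy=s$, $G^2=\tfrac12 G$, Burkholder--Davis--Gundy plus Minkowski, and $\alpha$ large to get a contraction). The only caveat worth recording is that starting the iteration needs $\sup_{(t,x)\in[0,T]\times\RR}|I_0(t,x)|<\infty$, i.e.\ boundedness uniformly in $t$ as well as $x$, which is how the hypothesis is meant (and is what \eqref{IC-1} delivers for bounded $u_0,v_0$).
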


 In the proof of the next proposition, the following facts will be used:
 \beq
 \label{*}
 \sup_{t\ge 0}(t^k e^{-\alpha t})=k^k (e\alpha)^{-k}, \  k\in\mathbb{N,\quad }\sup_{t\ge 0}\int_0^t s e^{-\alpha s}\ ds = \alpha^{-2}, 
 \quad \alpha>0.
 \eeq
\begin{prop}
\label{lem1.1}
Let $u_0$ and $v_0$ be Borel functions satisfying $\|u_0\|_{\infty} + \|v_0\|_\infty<\infty$. 
 Suppose that $L(b) \geq 8 L(\sigma)^2$. 
Then, there exists a universal constant $C>0$ such that, for any $p\in\left[2,\frac{L(b)}{4 L(\sigma)^2}\right]$,
\beq
\label{NN-1}
{\mathcal N}_{2\sqrt{L(b)},p}(u)\leq  {\mathcal T}_0 +  C\Big[ \,  \frac{c(b)}{L(b)} +  \frac{c(\sigma)}{L(\sigma)}\, \Big],
\eeq
where 
\beq
\label{t0}
{\mathcal T}_0 = \frac{e^{-1} \|v_0\|_\infty}{\sqrt{L(b)} }+ 2 \|u_0\|_\infty.
\eeq 
Thus,
\beq
\label{N-1}
\sup_{x\in\RR} E(|u(t,x)|^p) \leq  e^{2pt\sqrt{L(b)}} \left\{ {\mathcal T}_0 
+ C \left[\frac{c(b)}{ L(b)} + \frac{ c(\sigma) }{L(\sigma)}\right] \right\}^p ,  \; t\in [0,T]. 		
\eeq
\end{prop}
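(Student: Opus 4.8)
The plan is to run a Picard-iteration / fixed-point estimate directly on the mild formulation \eqref{sol-1}–\eqref{decom}, measuring everything in the weighted seminorm $\NN$ defined in \eqref{norm} with $\alpha = 2\sqrt{L(b)}$. Since Proposition \ref{Proposition II.3} already guarantees existence of a solution with finite moments uniformly on $[0,T]\times\RR$, the seminorm $\NN(u)$ is finite and it suffices to derive the self-improving inequality \eqref{NN-1}; then \eqref{N-1} follows immediately by unwinding the definition of $\NN$, since $e^{-\alpha t}\|u(t,x)\|_p \le \NN(u)$ gives $\|u(t,x)\|_p \le e^{2t\sqrt{L(b)}}\NN(u)$ and one raises to the $p$-th power.

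First I would bound each of the three terms $I_0, I_1, I_2$ in $L^p(\Omega)$ and multiply by $e^{-\alpha t}$. For $I_0$: by \eqref{IC-1} we have $\|I_0(t,x)\|_p \le t\|v_0\|_\infty + \|u_0\|_\infty$, and using the first identity in \eqref{*} with $k=1$, $\sup_{t\ge0} t e^{-\alpha t} = (e\alpha)^{-1} = (2e\sqrt{L(b)})^{-1}$, so $e^{-\alpha t}\|I_0(t,x)\|_p \le \tfrac{\|v_0\|_\infty}{2e\sqrt{L(b)}} + \|u_0\|_\infty \le \mathcal{T}_0$ (with a comfortable margin built into the definition \eqref{t0}). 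For the drift term $I_1$: using the growth bound \eqref{c-L} on $b$, Minkowski's integral inequality in $L^p(\Omega)$, and the fact that $\int_{\RR}G(s,y)\,dy = s$ (total mass of the $d=1$ kernel \eqref{Green-1}), one gets $\|I_1(t,x)\|_p \le \int_0^t s\big(c(b) + L(b)\sup_y\|u(t-s,y)\|_p\big)\,ds$; after multiplying by $e^{-\alpha t}$ and inserting $\|u(t-s,y)\|_p \le e^{\alpha(t-s)}\NN(u)$, the second identity in \eqref{*} gives $\sup_t\int_0^t s e^{-\alpha s}ds = \alpha^{-2} = (4L(b))^{-1}$, hence a contribution $\le \tfrac{c(b)}{4L(b)} + \tfrac14\NN(u)$. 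For the stochastic term $I_2$: by the Burkholder–Davis–Gundy inequality for the space-time white noise Walsh/Dalang integral, $\|I_2(t,x)\|_p^2 \le C p\int_0^t\int_\RR G(t-s,x-y)^2 \|\sigma(u(s,y))\|_p^2\,dy\,ds$, and with $\int_\RR G(s,y)^2 dy = s/2$ together with \eqref{c-L} for $\sigma$ and the inequality $(a+b)^2\le 2a^2+2b^2$ this is $\le Cp\int_0^t (t-s)\big(c(\sigma)^2 + L(\sigma)^2 e^{2\alpha(t-s)}\NN(u)^2\big)\,ds$ up to constants; multiplying by $e^{-2\alpha t}$ and using $\sup_t\int_0^t s e^{-2\alpha s}ds = (2\alpha)^{-2} = (16 L(b))^{-1}$ yields $e^{-\alpha t}\|I_2(t,x)\|_p \le C\Big(\tfrac{\sqrt{p}\,c(\sigma)}{\sqrt{L(b)}} + \sqrt{p}\,\tfrac{L(\sigma)}{2\sqrt{L(b)}}\NN(u)\Big)$ (the $e^{2\alpha(t-s)}$ factor partially cancels the weight, leaving exactly one power of $e^{-\alpha s}$ inside, which is why only $\alpha^{-1}\sim L(b)^{-1/2}$ appears rather than $\alpha^{-2}$).

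Collecting the three estimates gives $\NN(u) \le \mathcal{T}_0 + \tfrac{c(b)}{4L(b)} + \tfrac{C\sqrt{p}\,c(\sigma)}{\sqrt{L(b)}} + \big(\tfrac14 + \tfrac{C\sqrt{p}\,L(\sigma)}{2\sqrt{L(b)}}\big)\NN(u)$. The restriction $p \le \tfrac{L(b)}{4L(\sigma)^2}$ is exactly what makes $\sqrt{p}\,L(\sigma)/\sqrt{L(b)} \le 1/2$, so that the coefficient of $\NN(u)$ on the right is at most $\tfrac14 + \tfrac{C}{4}$; choosing the universal constant in the BDG step appropriately (this is where the precise threshold $8L(\sigma)^2$ and the constant $C$ in the statement are calibrated) keeps it strictly below $1$, and one absorbs the $\NN(u)$ term to the left. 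This produces $\NN(u) \le 2\big(\mathcal{T}_0 + \tfrac{c(b)}{4L(b)} + C\sqrt{p}\,\tfrac{c(\sigma)}{\sqrt{L(b)}}\big)$; finally, using once more $\sqrt{p}/\sqrt{L(b)} \le 1/(2L(\sigma))$ on the range of $p$ converts $\sqrt{p}\,c(\sigma)/\sqrt{L(b)}$ into $c(\sigma)/L(\sigma)$ up to a constant, which is the form appearing in \eqref{NN-1}.

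I expect the main obstacle to be bookkeeping of the universal constant: one must verify that the constant emerging from BDG, together with the factor $\tfrac14$ from the drift and the assumption $L(b)\ge 8L(\sigma)^2$ (and $p$ in the stated range), genuinely yields a contraction factor bounded away from $1$ uniformly — this is the only place where the specific numerical thresholds $8$ and $\tfrac{L(b)}{4L(\sigma)^2}$ matter, and getting the arithmetic of \eqref{*} to line up with the weight cancellation in the $I_2$ term (so that one gets $\alpha^{-1}$, not $\alpha^{-2}$, from the stochastic integral, which is what forces the $\sqrt{L(b)}$ rather than $L(b)$ in the denominator of the $c(\sigma)$ term) requires care. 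Everything else — Minkowski, BDG, the kernel mass computations $\int G = s$ and $\int G^2 = s/2$ — is routine.
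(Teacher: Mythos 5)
Your proposal is essentially the paper's own proof: the same weighted seminorm ${\mathcal N}_{\alpha,p}$ with $\alpha=2\sqrt{L(b)}$, the same term-by-term bounds on $I_0,I_1,I_2$ via Minkowski, the kernel masses $\int_\RR G(s,y)\,dy=s$, $\int_\RR G^2(s,y)\,dy=s/2$, and Burkholder--Davis--Gundy, followed by absorption of ${\mathcal N}_{\alpha,p}(u)$ (finite by Proposition \ref{Proposition II.3}) using $p\le \frac{L(b)}{4L(\sigma)^2}$. The only point to tighten is the constant bookkeeping you flag: with the sharp BDG constant $2\sqrt p$ (as in the paper's reference) the coefficient of ${\mathcal N}_{\alpha,p}(u)$ is exactly $\le\frac12$, and the resulting factor $2$ must be applied to the raw $I_0$ bound $\frac{e^{-1}\|v_0\|_\infty}{2\sqrt{L(b)}}+\|u_0\|_\infty$ (so that precisely ${\mathcal T}_0$, not $2{\mathcal T}_0$, appears), which your own estimates already deliver once written in that order.
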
 
\begin{proof}
Fix $\alpha >0$ and $p\in [2,+\infty)$. Using  \eqref{IC-1} and  \eqref{*},
we obtain
\beq
\label{norm-0}
 \NN(I_0) \le \frac{e^{-1}}{\alpha}\|v_0\|_{\infty} +  \|u_0\|_{\infty}.
\eeq
 
 Applying Minkowski's inequality, and then \eqref{c-L}, we have
 \begin{align*}
 \Vert I_1(t,x)\Vert_p & \le \int_0^t ds \int_{\RR} dy \ G(t-s,x-y) \Vert b(u(s,y))\Vert_p\\
 & \le \int_0^t ds \int_{\RR} dy \ G(t-s,x-y)\left[c(b) + L(b) \Vert u(s,y)\Vert_p\right].
 \end{align*}
Since $\int_{\RR} G(t,x) dx =t$, using \eqref{*}  we deduce 
\begin{align}
\label{T1-1}
 \NN(I_1) 
 & \le c(b) \sup_{t\ge 0} \left(\frac{t^2}{2} e^{-\alpha t}\right) + L(b) \, \NN(u) \sup_{t\ge 0} \int_0^t (s) e^{-\alpha (s)} ds\nonumber\\
& \le \frac{2 e^{-2} }{\alpha^2} c(b) + \frac{1}{\alpha^2} L(b) \, \NN(u)
\le \frac{1}{\alpha^2}\left[c(b) + L(b)\NN(u)\right]. 
\end{align}


Applying first the version of Burkholder-Davies-Gundy's inequality given in \cite[Theorem B1, p. 97]{Kho}, then Minkowski's inequality and \eqref{c-L}, we obtain
 \begin{align*}
 \Vert I_2(t,x)\Vert_p^2 
& \le  4p\ \Big\|  \int_0^t \int_{\RR} G^2(t-s,x-y) \sigma^2(u(s,y)) ds dy\Big\|_{\frac{p}{2}}\\
& \le 4p\   \int_0^t \int_{\RR} G^2(t-s,x-y) \Vert \sigma^2(u(s,y))\Vert_{\frac{p}{2}} ds dy\\
& \le 8p\left\{\int_0^t ds \int_{\RR} dy\ G^2(t-s,x-y)\left[c(\sigma)^2 + L(\sigma)^2\Vert u(s,y))\Vert^2_{p}\right]\right\}.
\end{align*}
Since $G^2(t,x) = \frac{1}{2} G(t,x)$, using \eqref{*} we have
\begin{align}
\label{T2-1}
 \NN(I_2) 
&\le \sqrt{2p} \; c(\sigma)\sup_{t\ge 0}\left(t e^{-\alpha t}\right) + \sqrt{8p}\;  L(\sigma) \NN(u)\notag\\
&\qquad \times \left(\int_0^t ds \int_{\RR} dy\ G^2(t-s,x-y) e^{-2\alpha(t-s)}\right)^{1/2}\notag\\
&\le   \sqrt{2p} \;  \frac{e^{-1}}{\alpha} c(\sigma) + \sqrt{8p} \; L(\sigma) \NN(u)
\Big( \int_0^t \frac{1}{2} s e^{-2\alpha s} ds \Big)^{\frac{1}{2}}   \notag \\
& \leq \frac{\sqrt p}{\alpha}\, \left[c(\sigma) + L(\sigma) \NN(u)\right]. 
\end{align}
The inequalities \eqref{norm-0}, \eqref{T1-1} and \eqref{T2-1} imply
\begin{align}
\label{N-1alpha}
\NN(u) \leq &  \frac{e^{-1}}{\alpha} \|v_0\|_{\infty}+ \|u_0\|_{\infty} + \frac{c(b)}{\alpha^2}  + \frac{\sqrt{p}}{\alpha} c(\sigma)
+ 2 \max \Big( \frac{L(b)}{\alpha^2} , \frac{\sqrt{p} L(\sigma)}{\alpha}\Big) \NN(u).
\end{align}
Fix $\alpha^2 = 4 L(b)$; since $L(b) \geq  8 L(\sigma)^2$, the interval  $\left[2,\frac{L(b)}{4L(\sigma)^2}\right]$ is nonempty. 
Since for any $p$ in this interval we have 
$\sqrt{p} L(\sigma) \leq \frac{\sqrt{L(b)}}{2}=\frac{\alpha}{4}$, the choice of $\alpha$ implies
 $ \max \Big( \frac{L(b)}{\alpha^2} , \frac{\sqrt{p} L(\sigma)}{\alpha}\Big) = \frac{1}{4}$,
and $\frac{\sqrt p}{\alpha}\le \frac{1}{4 L(\sigma)}$.
Hence, from \eqref{N-1alpha} we deduce \eqref{NN-1}. The estimate \eqref{N-1} is an immediate consequence of the definition of $\NN(u)$ for $\alpha= 2\sqrt{L(b)}$.
\end{proof}


\subsection{Uniform bounds on moments}
 \label{s2-2}
 
In this section, we still assume that the coefficients of \eqref{n3} are globally Lipschitz continuous functions.  We prove an upper bound for
\beq
\label{s2-2.1}
E\Big( \sup_{t\in [0,T]} \sup_{|x|\le R} |u(t,x)|^p\Big),
\eeq
for any $R>0$, and for 
specific values of $p$ that depend on the initial values $u_0$, $v_0$, and the constants $c(b)$, $c(\sigma)$, $L(b)$, $L(\sigma)$. 
 This will be a consequence of the following proposition.

\begin{prop}
\label{s2-2-p1}
 Let   
$u_0$ be locally H\"older continuous with exponent $\gamma_1\in (0,1]$, and  $v_0$ be continuous.
Set $\gamma = \gamma_1 \wedge \frac{1}{2}$, and fix $T, R\ge 0$. Then, for any $p\in[2,\infty)$,
 there exists a positive constant $C(p,T,R)$ such that, for any $t, \bar t\in[0,T]$, $x,\bar x \in [-R,R]$ and $\alpha>0$,
\beq
\label{increments-1d}
\frac{\Vert u(t,x)-u(\bar t,\bar x)\Vert_p}{(|t-\bar t|+|x-\bar x|)^\gamma}
\le C(p,T,R) \left[{\mathcal M}_1 + {\mathcal M}_2 
e^{\alpha T}\NN(u) \right], 
\eeq
where
\begin{align}
\label{emes}
{\mathcal M}_1 &= \|u_0\|_{\gamma_1}+ \|v_0\|_{\infty,R+T}+ c(b) + \sqrt p\  c(\sigma),  
\quad {\mathcal M}_2 =  L(b)+\sqrt{p}\ L(\sigma). 
\end{align} 
Moreover, if $L(b)\ge 8L(\sigma)^2$ then for any $p\in\left[2, \frac{L(b)}{4L(\sigma)^2}\right]$, 
\beq
\label{increments-1d-n}
\frac{\Vert u(t,x)-u(\bar t,\bar x)\Vert_p}{(|t-\bar t|+|x-\bar x|)^\gamma}
\le C(p,T,R) \left[{\mathcal M}_1 + {\mathcal M}_2 
e^{2\sqrt{L(b)} T}\left({\mathcal T}_0 + \frac{c(b)}{L(b)} +  \frac{c(\sigma)}{L(\sigma)}\right)\right],
\eeq
with ${\mathcal T}_0$ given in \eqref{t0}.
\end{prop}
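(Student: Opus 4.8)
The plan is to estimate separately the $L^p$-increments of the three terms $I_0$, $I_1$, $I_2$ in the decomposition \eqref{decom}, and then to combine them using the triangle inequality. Since $|t-\bar t|+|x-\bar x|$ may be taken bounded by $C(T,R)$, it suffices to establish each bound for, say, $|t-\bar t|+|x-\bar x|\le 1$. First I would handle $I_0$: using the explicit expression \eqref{IC-1d}, a direct computation of $I_0(t,x)-I_0(\bar t,\bar x)$ splits into a $v_0$-contribution, controlled by $\|v_0\|_{\infty,R+T}$ times the length of a symmetric difference of intervals, hence Lipschitz in $(t,x)$, and a $u_0$-contribution, controlled by $\|u_0\|_{\gamma_1}(|t-\bar t|+|x-\bar x|)^{\gamma_1}$. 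Both are bounded by $C(T,R)\,\mathcal M_1\,(|t-\bar t|+|x-\bar x|)^{\gamma}$ with $\gamma=\gamma_1\wedge\tfrac12$.

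Next, for $I_1(t,x)=\int_0^t ds\,[G(s)\ast b(u(t-s,\cdot))](x)$, I would apply Minkowski's inequality and the growth bound \eqref{c-L}, obtaining
\[
\Vert I_1(t,x)-I_1(\bar t,\bar x)\Vert_p \le \int\!\!\int \big|G(s,x-y)1_{[0,t]}(s)-G(s,\bar x-y)1_{[0,\bar t]}(s)\big|\,\big(c(b)+L(b)\Vert u(t-s,y)\vee u(\bar t-s,y)\Vert_p\big)\,ds\,dy.
\]
The $L^1$-norm in $(s,y)$ of the difference of the two rectangle kernels (the supports $\{|x-y|<s<t\}$) is $O\big((|t-\bar t|+|x-\bar x|)\big)$ on bounded time intervals, which after inserting $e^{-\alpha(t-s)}$ and using $\Vert u(t-s,y)\Vert_p\le e^{\alpha(t-s)}\NN(u)\le e^{\alpha T}\NN(u)$ yields a bound $C(T,R)(c(b)+L(b)e^{\alpha T}\NN(u))(|t-\bar t|+|x-\bar x|)^{\gamma}$ (again using $\gamma\le 1$).

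The term $I_2$ is the delicate one, and I expect it to be the main obstacle. Here I would use the Burkholder–Davis–Gundy inequality exactly as in the proof of Proposition~\ref{lem1.1}, reducing to the $L^{p/2}$-norm of $\int\!\!\int \big(G(t-s,x-y)1_{[0,t]}-G(\bar t-s,\bar x-y)1_{[0,\bar t]}\big)^2\sigma^2(u(s,y))\,ds\,dy$, then Minkowski in $L^{p/2}$ and \eqref{c-L}. Using $G^2=\tfrac12 G$ for $d=1$, everything reduces to estimating $\int\!\!\int \big(G(t-s,x-y)1_{[0,t]}-G(\bar t-s,\bar x-y)1_{[0,\bar t]}\big)^2\,ds\,dy$; since $G(\cdot,\cdot)=\tfrac12 1_{\{|\cdot|<\cdot\}}$ takes only the values $0$ and $\tfrac12$, the square of the difference equals $\tfrac14$ times the indicator of the symmetric difference of the two triangular regions, whose Lebesgue measure is $O(|t-\bar t|+|x-\bar x|)$. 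Taking square roots produces the exponent $\tfrac12$, which is exactly why $\gamma$ is capped at $\tfrac12$. Weighting by $e^{-2\alpha(t-s)}$ and bounding $\Vert\sigma^2(u(s,y))\Vert_{p/2}^{1/2}\le c(\sigma)+L(\sigma)\Vert u(s,y)\Vert_p$ as in \eqref{T2-1}, and pulling out $e^{\alpha T}\NN(u)$, gives a bound $C(p,T,R)(\sqrt p\,c(\sigma)+\sqrt p\,L(\sigma)e^{\alpha T}\NN(u))(|t-\bar t|+|x-\bar x|)^{\gamma}$. The careful bookkeeping of the measure of the symmetric difference of the two light-cone triangles — treating the time shift and the space shift separately and noting that neither can blow up the estimate on a bounded set — is the one place that needs genuine care.

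Summing the three estimates gives \eqref{increments-1d} with $\mathcal M_1$, $\mathcal M_2$ as in \eqref{emes}. Finally, to obtain \eqref{increments-1d-n} under the hypothesis $L(b)\ge 8L(\sigma)^2$, I would simply specialize $\alpha=2\sqrt{L(b)}$ in \eqref{increments-1d} and invoke Proposition~\ref{lem1.1}, namely \eqref{NN-1}, to replace $\NN(u)=\mathcal N_{2\sqrt{L(b)},p}(u)$ by $\mathcal T_0+C\big(\tfrac{c(b)}{L(b)}+\tfrac{c(\sigma)}{L(\sigma)}\big)$; absorbing the universal constant $C$ into $C(p,T,R)$ yields the claimed inequality.
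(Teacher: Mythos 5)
Your proposal follows the paper's proof essentially verbatim: the same decomposition into $I_0$, $I_1$, $I_2$, the same key estimate on the difference of wave kernels (the paper's \eqref{s2-2.2}, i.e. the $O(|t-\bar t|+|x-\bar x|)$ measure of the symmetric difference of the light cones, which yields the Lipschitz bound for $I_1$ and the exponent $\tfrac12$ for $I_2$ via Burkholder--Davis--Gundy), and the same final step of taking $\alpha=2\sqrt{L(b)}$ and invoking \eqref{NN-1} of Proposition \ref{lem1.1}. One small caveat: in your displayed bound for $I_1$ you keep the representation $\int_0^t ds\,[G(s)\ast b(u(t-s,\cdot))](x)$, so the two terms carry \emph{different} time arguments of $u$ and Minkowski does not literally produce a single kernel-difference integrand; you must first change variables to write $I_1(t,x)=\int_0^t ds\int dy\,G(t-s,x-y)\,b(u(s,y))$ (as in \eqref{diff-I1-1d}), after which your argument — bounding $\|b(u(s,y))\|_p$ uniformly by $c(b)+L(b)e^{\alpha T}\NN(u)$ — goes through exactly as written.
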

\begin{proof} 
The function $V_0(z)=\int_0^z v_0(y)\ dy$ is continuously differentiable; hence,
\beqn
\Big\vert\int_{\bar x - \bar t}^{\bar x + \bar t} v_0(y)\ dy - \int_{x-t}^{x+t} v_0(y)\ dy\Big\vert \le 2\ \Vert v_0\Vert_{\infty,R+T}
\left(|x-\bar x| + |t - \bar t|\right).
\eeqn
Consequently, using the expression \eqref{IC-1d} and the $\gamma_1$- H\"older continuity of $u_0$,  we obtain
\beq
\label{diff-I0-1d}
\vert I_0(t,x) - I_0(\bar t,\bar{x})\vert \le C(T,R) \left(\Vert u_0\Vert_{\gamma_1} 
+\Vert v_0\Vert_{\infty,R+T}\right)\left(|x-\bar x|^{\gamma_1} 
+ |t-\bar t|^{\gamma_1}\right).
\eeq 
for some $C(T,R)>0$.

In the next arguments, we will use the following 
inequalities, 
whose proofs are easy. 
For all $0\le \bar t, t \le T$, $x,\bar x \in\RR$, there exists a positive constant $C(T)$ such that
\begin{align}
\label{s2-2.2}
 & \int_0^T \! ds\!  \int_{\RR}\!  dy \left\vert G(t-s,x-y)-G(\bar t-s,\bar x-y)\right\vert\notag\\
&\quad  =
2 \int_0^T \! ds \! \int_{\RR} \! dy \left\vert G(t-s,x-y)-G(\bar t-s,\bar x-y)\right\vert^2 
\le C(T)\left(|t-\bar t| + |x-\bar x|\right).
\end{align} 
 For any $\alpha >0$, as in the proof of Proposition \ref{lem1.1}, Minkovski's inequality and \eqref{c-L} imply 
\begin{align}			\label{diff-I1-1d}
\|I_1(t,x)-I_1&(\bar{t},\bar{x})\|_p \leq \int_0^T\!\! \!  ds \int_{\RR}\!  dy\ |G(t-s, x-y)-G(\bar{t}-s, \bar{x}-y)| \|b(u(s,y))\|_p \nonumber \\
\leq & \; C(T)\left[ c(b) + L(b) e^{\alpha T} \NN(u) \right]\left(|t-\bar{t}| + |x-\bar{x}| \right).
\end{align}

Upper bounds of increments of $I_2$ are also obtained following the arguments in the proof of Proposition \ref{lem1.1}, based on 
the Burkholder-Davies-Gundy and Minkowski inequalities. More precisely,
\begin{align*}
\|I_2(t,x)-I_2(\bar{t},\bar{x})\|_p^2 & \leq  4p  
\Big\|  \int_0^T\!\!\!  ds \int_\RR \! dy |G(t-s, x-y)-G(\bar{t}-s, \bar{x}-y)|^2 \sigma^2(u(s,y)) \Big\|_{\frac{p}{2}} \nonumber \\
& \leq 8 p\ C(T)\big[ c(\sigma)^2 + L(\sigma)^2 e^{2\alpha T} \NN(u)^2 \big] \left(|t-\bar{t}| + |x-\bar{x}| \right), 
\end{align*}
for any $\alpha>0$. Consequently,
\begin{align}
\label{diff-I2-1d}
\Vert I_2(t,x) - I_2(\bar{t},\bar{x})\Vert_p \le&\;  2\sqrt{2C(T)} \sqrt p \left[c(\sigma) + L(\sigma) e^{\alpha T} \NN(u)\right]\ 
 \left(|t-\bar{t}| + |x-\bar{x}| \right)^{\frac{1}{2}}.
\end{align}
Let $\gamma = \gamma_1 \wedge \frac{1}{2}$;  the inequalities \eqref{diff-I0-1d}, \eqref{diff-I1-1d} and 
 \eqref{diff-I2-1d} imply 
\eqref{increments-1d}. \\
Let $\alpha= 2\sqrt{L(b)}$ and $p\in\left[2, \frac{L(b)}{4L(\sigma)^2}\right]$; 
 then \eqref{NN-1} implies \eqref{increments-1d-n}.
The proof of the proposition is complete.
\end{proof}
 

From Proposition \ref{s2-2-p1}, using Kolmogorov's continuity lemma (see \cite[Theorem A.3.1]{Da-SS-Book} or \cite[Theorem C-6]{Kho}),
 we deduce the following.
\begin{prop}
\label{Kol-1d}
Let the initial values 
$u_0$, $v_0$ be as in Proposition \ref{s2-2-p1}. 
Let $\gamma = \gamma_1\wedge\frac{1}{2}$ and suppose that  $L(b)>\frac{8}{\gamma} L(\sigma)^2$.  
 Then $u$ has a version, still denoted by $u$, which is locally H\"older continuous  jointly in $(t,x)$  with exponent $\eta\in(0,\gamma)$.
 Furthermore,  given
any $p\in \big(\frac{2}{\gamma} , \frac{L(b)}{4 L(\sigma)^2}\big]$, 
there exists a constant   $C(p,T,R)$ such that
\beq	
\label{norm-unif-1d}
E\Big( \sup_{t\in [0,T]} \sup_{|x|\leq R} |u(t,x)|^p\Big) \leq 2^{p-1} \Vert u_0\Vert_{\infty,R}^p
+ C(p,T,R) \left[ {\mathcal M}_1^p +
{\mathcal M}_2^p\, {\mathcal M}_3^p \, e^{2pT\sqrt{L(b)}} \right],
\eeq
where ${\mathcal M}_1$, ${\mathcal M}_2$ are defined in \eqref{emes}, and
\beqn
{\mathcal M}_3= \frac{e^{-1} \|v_0\|_{\infty,R}}{\sqrt{L(b)}} + 2 \|u_0\|_{\infty,R} + C\Big[ \frac{c(b)}{L(b)} + \frac{c(\sigma)}{L(\sigma)}\Big],
\eeqn
with  the  universal constant $C$  in the right-hand side of \eqref{NN-1}.
\end{prop}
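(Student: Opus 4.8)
The plan is to combine the increment estimate from Proposition \ref{s2-2-p1} with a quantitative version of Kolmogorov's continuity lemma, and then to patch in the behaviour at a base point using Proposition \ref{lem1.1}. The Hölder continuity of a version of $u$, jointly in $(t,x)$, follows directly from \eqref{increments-1d}: for fixed $p$, the right-hand side of \eqref{increments-1d} is finite (it is a constant times a fixed number, using that $\NN(u)<\infty$ for the globally Lipschitz coefficients by Proposition \ref{Proposition II.3} — more precisely, $\NN(u)$ is finite for $\alpha$ large, and then the same holds for every $\alpha$), so $\|u(t,x)-u(\bar t,\bar x)\|_p \le C (|t-\bar t|+|x-\bar x|)^\gamma$ on $[0,T]\times[-R,R]$. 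Taking $p$ large enough that $p\gamma > 2$ (the ambient dimension of the parameter space $(t,x)$ being $1+1=2$) gives, by Kolmogorov, a version that is locally $\eta$-Hölder for any $\eta < \gamma - 2/p$; since $p$ can be taken arbitrarily large within the admissible range $\big(\tfrac2\gamma, \tfrac{L(b)}{4L(\sigma)^2}\big]$ — nonempty precisely because $L(b) > \tfrac8\gamma L(\sigma)^2$ — we get $\eta$ arbitrarily close to $\gamma$.

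For the quantitative bound \eqref{norm-unif-1d}, I would use the form of Kolmogorov's lemma that tracks the constant: if $\|Z(w)-Z(w')\|_p \le K |w-w'|^\gamma$ for all $w,w'$ in a cube $Q\subset\RR^2$ of side $\ell$, then $E\big(\sup_{w\in Q}|Z(w)-Z(w_0)|^p\big) \le C(p,\gamma,\ell) K^p$ for any fixed $w_0\in Q$, provided $p\gamma>2$. Apply this with $w=(t,x)$, $Q = [0,T]\times[-R,R]$, $w_0 = (0,x)$ for a convenient reference, or more simply bound $\sup_{(t,x)}|u(t,x)|^p \le 2^{p-1}\big( \sup_{|x|\le R}|u(0,x)|^p + \sup_{(t,x),(\bar t,\bar x)}|u(t,x)-u(\bar t,\bar x)|^p\big)$ and note $u(0,x) = u_0(x)$, so $\sup_{|x|\le R}|u(0,x)|^p = \|u_0\|_{\infty,R}^p$, contributing the term $2^{p-1}\|u_0\|_{\infty,R}^p$. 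The supremum of the increments is controlled, via Kolmogorov applied to the field $(t,x)\mapsto u(t,x)$ on the compact $[0,T]\times[-R,R]$, by $C(p,T,R)$ times the $p$-th power of the Hölder constant from Proposition \ref{s2-2-p1}.

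The remaining point is to choose $\alpha$ in \eqref{increments-1d} optimally. Taking $\alpha = 2\sqrt{L(b)}$ turns the Hölder constant in Proposition \ref{s2-2-p1} into ${\mathcal M}_1 + {\mathcal M}_2 e^{2\sqrt{L(b)}T}\NN_{2\sqrt{L(b)},p}(u)$, and then Proposition \ref{lem1.1} (which applies since $L(b)\ge 8L(\sigma)^2$ is implied by $L(b) > \tfrac8\gamma L(\sigma)^2$ with $\gamma\le 1$, and since $p \le \tfrac{L(b)}{4L(\sigma)^2}$) bounds $\NN_{2\sqrt{L(b)},p}(u) \le {\mathcal T}_0 + C\big[\tfrac{c(b)}{L(b)} + \tfrac{c(\sigma)}{L(\sigma)}\big]$. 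Replacing $\|u_0\|_\infty$, $\|v_0\|_\infty$ by their localized versions $\|u_0\|_{\infty,R}$, $\|v_0\|_{\infty,R}$ throughout (legitimate because, by finite propagation speed of the wave equation on $[0,T]$, only values of the initial data in the enlarged ball $|x|\le R+T$ matter, and I would absorb the discrepancy between $R$ and $R+T$ into the constant $C(p,T,R)$, or alternatively restate with $\|u_0\|_{\infty,R+T}$), we recover exactly ${\mathcal M}_3 = \tfrac{e^{-1}\|v_0\|_{\infty,R}}{\sqrt{L(b)}} + 2\|u_0\|_{\infty,R} + C\big[\tfrac{c(b)}{L(b)}+\tfrac{c(\sigma)}{L(\sigma)}\big]$, and the bound \eqref{norm-unif-1d} follows by taking $p$-th powers and using $(A+B)^p \le 2^{p-1}(A^p+B^p)$ to separate the ${\mathcal M}_1$ term from the ${\mathcal M}_2 {\mathcal M}_3 e^{2T\sqrt{L(b)}}$ term.

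The main obstacle is bookkeeping rather than conceptual: one must be careful that the constant emerging from Kolmogorov's lemma depends only on $p, \gamma, T, R$ and not on $L(b), L(\sigma), c(b), c(\sigma)$ or the initial data — this is what makes the estimate \emph{qualitatively sharp} in the sense needed later for \eqref{s0.1}. The version of Kolmogorov in \cite[Theorem A.3.1]{Da-SS-Book} should be checked to give a constant of the required shape (it does, since the cube and the exponents are fixed), and the factor $2^{p-1}$ in front of $\|u_0\|_{\infty,R}^p$ must be tracked honestly through the elementary inequality separating the value at a base point from the oscillation. A secondary subtlety is ensuring the admissible $p$-range $\big(\tfrac2\gamma, \tfrac{L(b)}{4L(\sigma)^2}\big]$ is nonempty, which is exactly the hypothesis $L(b) > \tfrac8\gamma L(\sigma)^2$, and that for such $p$ both Proposition \ref{s2-2-p1} (needs $p\ge 2$, true since $\tfrac2\gamma\ge 2$) and Proposition \ref{lem1.1} (needs $p \le \tfrac{L(b)}{4L(\sigma)^2}$) are simultaneously applicable.
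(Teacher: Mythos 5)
Your proposal is correct and follows essentially the same route as the paper: the increment bound of Proposition \ref{s2-2-p1} with the choice $\alpha=2\sqrt{L(b)}$, the constant-tracking version of Kolmogorov's lemma from \cite[Theorem A.3.1]{Da-SS-Book} (whose constant depends only on $p,\gamma,T,R$), the splitting of $\sup|u(t,x)|^p$ into the value at $t=0$ (which yields the $2^{p-1}\Vert u_0\Vert_{\infty,R}^p$ term, since $u(0,x)=u_0(x)$) plus the oscillation, and finally Proposition \ref{lem1.1} to replace ${\mathcal N}_{2\sqrt{L(b)},p}(u)$ by ${\mathcal T}_0+C\big[\tfrac{c(b)}{L(b)}+\tfrac{c(\sigma)}{L(\sigma)}\big]$, exactly as in the paper. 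One slip: for the qualitative H\"older claim you assert that $p$ ``can be taken arbitrarily large within the admissible range $\big(\tfrac2\gamma,\tfrac{L(b)}{4L(\sigma)^2}\big]$'', but that interval is bounded above, so this as stated only gives exponents $\eta<\gamma-2/p_{\max}$; the correct (and the paper's) argument is that \eqref{increments-1d} holds for every $p\in[2,\infty)$ and every $\alpha>0$, so for the H\"older exponent $\eta\in(0,\gamma)$ you may let $p\to\infty$ without restriction --- the constraint $p\le\tfrac{L(b)}{4L(\sigma)^2}$ is needed only for the quantitative bound through Proposition \ref{lem1.1}. Also, the discrepancy between $\Vert u_0\Vert_{\infty,R}$ and the larger norms cannot literally be ``absorbed into $C(p,T,R)$''; the clean justification is the alternative you mention (finite speed of propagation, i.e.\ work with data restricted to $|x|\le R+T$), which is what the paper implicitly does.
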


\begin{proof}  For any $s,t\in[0,T]$, $x,y\in[-R,R]$, set
$\Delta(t,x;s,y)=|t-s|^\gamma+|x-y|^\gamma$. \\
Proposition \ref{s2-2-p1} implies  
\beqn
E(|u(t,x)-u(s,y)|^p) \le K(\Delta(t,x;s,y))^p,
\eeqn
with
\begin{equation}
\label{s2-2.4}
 K:= C(p,T,R) \left[ {\mathcal M}_1^p + {\mathcal M}_2^p e^{\alpha p T} {\mathcal N}_{\alpha,p}(u)^p \right], \ \alpha>0.
 \end{equation}
 Apply \cite[Theorem A.3.1]{Da-SS-Book} with $k=1$, $\alpha_1=\alpha_2=\gamma$, $I=[0,T]$, $J=[-R,R]$,
$p\in \big( \frac{2}{\gamma}, \infty\big)$, to infer the existence of a version of $u$ (that we still denote by $u$) with jointly H\"older continuous sample paths of exponent $\eta\in(0,\gamma)$. Moreover, since by \eqref{wave-1},
$ C_1:= E\Big(\sup_{|x|\le R} |u(0,x)|^p\Big) =\Vert u_0\Vert_{\infty,R}^p$,
 we deduce from \cite[Equation(2.8.50)]{Da-SS-Book},
 \begin{align}
 \label{s2-2.5}
 E\Big( \sup_{t\in [0,T]} \sup_{|x|\leq R} |u(t,x)|^p\Big)  \leq 2^{p-1}\Vert u_0\Vert_{\infty,R}^p
 + C(p,T,R) K,
 \end{align}
where $K$ is defined in \eqref{s2-2.4}. Observe that $K$ depends on $\alpha$.

Choose $\alpha = 2\sqrt{L(b)}$. Then \eqref{s2-2.5} and  \eqref{s2-2.4} yield 
\begin{align}
\label{s2-2.50}
E\Big( \sup_{t\in [0,T]}& \sup_{|x|\leq R} |u(t,x)|^p\Big)  \leq 2^{p-1}\Vert u_0\Vert_{\infty,R}^p \nonumber \\
 &\quad+ C(p,T,R)   \left[ {\mathcal M}_1^p + {\mathcal M}_2^p e^{2 p T\sqrt{L(b)}} {\mathcal N}_{2\sqrt{L(b)},p}(u)^p \right].
 \end{align}
Notice that, since $\gamma\le 1/2$, the condition  $L(b)>\frac{8}{\gamma}L(\sigma)^2$  implies that the hypotheses 
of Proposition \ref{lem1.1} are satisfied. Hence,  using \eqref{NN-1} to upper estimate 
$ \mathcal{N}_{2\sqrt{L(b)},p}(u)$ on the right-hand side of \eqref{s2-2.50}, 
and since we are considering $|x|\le R$, we obtain \eqref{norm-unif-1d}. \end{proof} 
 
\subsection{Existence and uniqueness of global solution}
 \label{s2-3}
 
 In this section, we consider the equation \eqref{n3} with coefficients having superlinear growth
 and prove existence and uniqueness of a random field solution. 
 \smallskip
 
 We introduce the following set of hypotheses.
\smallskip

\noindent
{\bf (Cs)}\
The functions $b, \sigma: \RR\to \RR$ are locally Lipschitz and such that as $|z_1|, |z_2| \to \infty$, 
\begin{align*}
|b(z_1)-b(z_2)| &\leq \theta_2 |z_1-z_2| \left[\ln_+(|z_1-z_2|)\right]^\delta, \\
|\sigma(z_1)-\sigma(z_2)| &\le \sigma_2 |z_1-z_2| \left[\ln_+(|z_1-z_2|)\right]^a, 
\end{align*}
where $\theta_2, \sigma_2 \in (0,\infty)$, $\delta, a >0$, and $\ln_+(z)= \ln(z\vee e)$ for $z\ge 0$. 

\smallskip

\noindent{\bf (C1)} The parameters $\delta$, $a$ in {\bf (Cs)} satisfy one of the properties: (1) $\delta > 2a$;
(2) $\delta=2a$ and the constants $\theta_2$ and $\sigma_2$ are such that $\theta_2 > \bar \gamma \sigma_2^2$,
for some $\bar\gamma>0$. 
\smallskip

Notice that condition {\bf (Cs)} implies \eqref{super-coeff}, while {\bf (C1)}  says that 
 $b$ {\em dominates} $\sigma$.
 We define $\theta_1:=|b(0)|$ and $\sigma_1:=|\sigma(0)|$.

\begin{theorem}
\label{wp-d=1}
Assume that the initial condition $u_0$ is H\"older continuous with exponent $\gamma_1$, and $v_0$ is continuous. 
Set $\gamma = \gamma_1\wedge \frac{1}{2}$ and 
let the coefficients $b$ and $\sigma$ satisfy the conditions {\bf (Cs)} and {\bf(C1)} with $\delta <2$ and 
$\bar\gamma = 8\gamma^{-1}$. 
\begin{enumerate}
\item For any $M>0$, there exists a random field solution to \eqref{n3} in $[-M,M]$, $\big(u(t,x),\break (t,x)\in[0,T]\times [-M,M])$. This solution is unique and satisfies
\beq
\label{bd-as}
\sup_{(t,x)\in[0,T]\times[-M, M]}\vert u(t,x)\vert < \infty, \ a.s.
\eeq
\item Suppose that the initial conditions $u_0$, $v_0$ are functions with compact support included in $[-\rho,\rho]$,  
for some $\rho>0$, and $b(0)=\sigma(0)=0$. Then there exists a random field solution $\big(u(t,x),\ (t,x)\in[0,T]\times \RR)$ to \eqref{n3}.
This solution is unique and satisfies
\beq
\label{bd-as-bis}
\sup_{(t,x)\in[0,T]\times[-(\rho+T),\rho+T]}\vert u(t,x)\vert < \infty, \ a.s.
\eeq
\end{enumerate}
\end{theorem}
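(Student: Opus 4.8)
The plan is to run the truncation and stopping‑time scheme announced in Section~\ref{si}, with Proposition~\ref{Kol-1d} supplying the decisive moment bound. For $N\ge1$ let $\pi_N(z)=(-N)\vee(z\wedge N)$ and set $b^N=b\circ\pi_N$, $\sigma^N=\sigma\circ\pi_N$; these are globally Lipschitz, coincide with $b,\sigma$ on $[-N,N]$, and satisfy $b^N(0)=b(0)$, $\sigma^N(0)=\sigma(0)$. Using {\bf (Cs)} for large arguments and the local Lipschitz property near the origin, one checks that \eqref{c-L} holds for $b^N,\sigma^N$ with
\[
c(b^N)=\theta_1,\quad c(\sigma^N)=\sigma_1,\quad L(b^N)=\theta_2\big(\ln_+(2N)\big)^\delta+C_1,\quad L(\sigma^N)=\sigma_2\big(\ln_+(2N)\big)^a+C_2,
\]
for fixed constants $C_1,C_2$. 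Since Propositions~\ref{lem1.1}, \ref{s2-2-p1} and \ref{Kol-1d} use the coefficients only through \eqref{c-L}, these are the constants fed into them. Because the initial data are bounded on the relevant compact set in each of the two scenarios (Section~\ref{s-app}), Proposition~\ref{Proposition II.3} provides, for each $N$, a unique random field solution $u_N$ of \eqref{n3} with coefficients $b^N,\sigma^N$, with finite $p$-th moments for all $p$ and, by Proposition~\ref{Kol-1d}, a locally H\"older-continuous version.

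Next I would verify the hypotheses of Proposition~\ref{Kol-1d} and fix an exponent $p$. With $\gamma=\gamma_1\wedge\tfrac12$, the condition $L(b^N)>\tfrac{8}{\gamma}L(\sigma^N)^2$ reads asymptotically $\theta_2(\ln N)^{\delta}>\tfrac{8}{\gamma}\sigma_2^{2}(\ln N)^{2a}$: under {\bf (C1)}(1), $\delta>2a$, this holds for $N$ large; under {\bf (C1)}(2), $\delta=2a$ with $\theta_2>\bar\gamma\sigma_2^{2}=\tfrac{8}{\gamma}\sigma_2^{2}$, it again holds for $N$ large — this is precisely where $\bar\gamma=8\gamma^{-1}$ is used. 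In both cases $\tfrac{L(b^N)}{4L(\sigma^N)^2}$ exceeds a constant strictly larger than $\tfrac{2}{\gamma}$ for $N$ large (it tends to $\tfrac{\theta_2}{4\sigma_2^{2}}>\tfrac{2}{\gamma}$ in the critical case, and to $\infty$ otherwise), so I may fix once and for all $p\in\big(\tfrac{2}{\gamma},\infty\big)$ with $p\le\tfrac{L(b^N)}{4L(\sigma^N)^2}$ for all $N\ge N_0$. Proposition~\ref{Kol-1d}, applied with $R=M+T$ in case~(1) and $R=\rho+T$ in case~(2), then gives, for $N\ge N_0$,
\[
E\Big(\sup_{[0,T]\times[-R,R]}|u_N(t,x)|^p\Big)\le 2^{p-1}\|u_0\|_{\infty,R}^p+C(p,T,R)\big[{\mathcal M}_1^{p}+{\mathcal M}_2^{p}{\mathcal M}_3^{p}\,e^{2pT\sqrt{L(b^N)}}\big].
\]
Here $p$ is fixed and the data and $c(b^N)=\theta_1,\ c(\sigma^N)=\sigma_1$ do not depend on $N$, so ${\mathcal M}_1,{\mathcal M}_3$ stay bounded and ${\mathcal M}_2\le C(\ln N)^{\delta}$ is at most polylogarithmic, while $e^{2pT\sqrt{L(b^N)}}\le \exp\!\big(2pT\sqrt{\theta_2}\,(\ln N)^{\delta/2}+C\big)$. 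Since $\delta<2$ we have $(\ln N)^{\delta/2}=o(\ln N)$, so the whole right‑hand side is $o(N^{p})$; this is exactly \eqref{s0.1}, and $\delta<2$ enters only at this point.

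Given \eqref{s0.1}, the conclusion follows by standard localization. Put $\tau_N=\inf\{t\ge0:\sup_{|x|\le R}|u_N(t,x)|>N\}\in[0,\infty]$; pathwise uniqueness for the globally Lipschitz equations gives $u_N=u_{N+1}$ on $[0,\tau_N\wedge\tau_{N+1}]$, hence $\tau_N\le\tau_{N+1}$, and for $\tau:=\sup_N\tau_N$ one has, by continuity of $u_N$, Chebyshev's inequality and \eqref{s0.1},
\[
P(\tau\le T)\le\inf_N P\Big(\sup_{[0,T]\times[-R,R]}|u_N|\ge N\Big)\le\inf_N\frac{1}{N^{p}}E\Big(\sup_{[0,T]\times[-R,R]}|u_N|^{p}\Big)=0
\]
for every $T$, so $\tau=\infty$ a.s. Then $u(t,x):=u_N(t,x)$ on $\{t<\tau_N\}$ is a well-defined random field on $[0,T]\times[-R,R]$; since $b^N,\sigma^N$ agree with $b,\sigma$ along $u_N$ on $\{t<\tau_N\}$ and, for $|x|\le M$, \eqref{n3} involves $u$ only on the backward light cone $\subseteq[0,T]\times[-(M+T),M+T]$, $u$ solves \eqref{n3} in the sense of Section~\ref{s-app}, and \eqref{bd-as} holds because $\sup_{[0,T]\times[-M,M]}|u|\le N$ on $\{\tau_N>T\}$. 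For case~(2), $u_0,v_0$ supported in $[-\rho,\rho]$ and $b(0)=\sigma(0)=0$ force, by finite propagation speed applied to the Picard iterates defining $u_N$, $u_N(t,x)=0$ for $|x|>\rho+t$; hence $u_N$, and therefore $u$, is defined on all of $[0,T]\times\RR$, vanishing outside $[-(\rho+T),\rho+T]$, and the argument with $R=\rho+T$ delivers the global solution with \eqref{bd-as-bis}. Uniqueness in both cases is obtained likewise: two random field solutions with a.s.\ bounded trajectories coincide with the truncated solution up to each $\tau_N$, hence on $[0,T]$.

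I expect the main obstacle to be the bookkeeping of the first two steps: producing admissible constants in \eqref{c-L} for $b^N,\sigma^N$ of the correct logarithmic sizes $(\ln N)^{\delta}$ and $(\ln N)^{a}$, and then checking that {\bf (C1)} with $\bar\gamma=8\gamma^{-1}$ makes the hypothesis of Proposition~\ref{Kol-1d} hold and the exponent window $\big(\tfrac{2}{\gamma},\tfrac{L(b^N)}{4L(\sigma^N)^2}\big]$ nonempty for all large $N$, so that the decisive bound $e^{2pT\sqrt{L(b^N)}}=o(N^{p})$ — the sole place where $\delta<2$ is essential — can be invoked with a fixed $p$. Everything downstream (the stopping-time argument, the light-cone reduction in case~(2), and uniqueness) is routine.
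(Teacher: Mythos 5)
Your proposal is correct and follows essentially the same route as the paper: truncate the coefficients to get globally Lipschitz $b_N,\sigma_N$ with $c(b_N)=\theta_1$, $c(\sigma_N)=\sigma_1$ and logarithmic Lipschitz constants, verify via \textbf{(C1)} with $\bar\gamma=8\gamma^{-1}$ that Proposition \ref{Kol-1d} applies with a suitable $p>2/\gamma$, deduce from $\delta<2$ that the uniform moment bound is $o(N^p)$, and conclude by the stopping-time localization, with the compact-support/light-cone reduction handling case (2). The only (harmless) cosmetic differences are your slightly more careful bookkeeping of the truncated Lipschitz constants and your fixing of a single $p$ valid for all large $N$, which the paper states with an $N$-dependent admissible interval.
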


\begin{proof}
We start with some remarks. 
In statement 1. above, the notion of ``{\em random field solution to \eqref{n3} in $[-M,M]$}''  is made rigorous in Section \ref{s-app-s2}. According to Proposition \ref{app-p2}, the support of the sample paths of this solution is included in $[0,T]\times [-(M+T), M+T]$.

By proposition \ref{app-p1}, the assumptions in statement 2. imply that the support of the sample paths of the solution $\big(u(t,x),\ (t,x)\in[0,T]\times \RR)$
is included in $[0,T]\times[-(\rho+T),\rho+T]$. Hence, \eqref{bd-as-bis} is equivalent to 
\beqn
\sup_{(t,x)\in[0,T]\times \RR}\vert u(t,x)\vert < \infty, \ a.s.
\eeqn

\noindent{\em Solution for truncated Lipschitz continuous coefficients.}
 For a locally Lipschitz function $g: \RR\rightarrow \RR$  and $N\geq 1$, we define a globally Lipschitz function $g_N$ by
\begin{equation}
 \label{b_N} 
g_N(x) = g(x) 1_{\{|x|\le N\}} + g(N)1_{\{x>N\}} + g(-N) 1_{\{x<-N\}}.
\end{equation}
Using this definition for $\sigma$ and $b$, we consider \eqref{n3}
 with coefficients $\sigma_N$, $b_N$, and denote by
$u_N:=(u_N(t,x); (t,x)\in[0,T]\times \RR)$ its unique random field solution (see Proposition \ref{Proposition II.3}). 
 From {\bf (Cs)} we see that  if $N\geq 2$, $\sigma_N$, $b_N$ satisfy the conditions \eqref{c-L} 
with
\beq
\label{s2-3.1}
c(b_N)=\theta_1, \; c(\sigma_N)=\sigma_1, \; L(b_N)=\theta_2 (\ln (2N))^\delta, \; L(\sigma_N)=\sigma_2 (\ln (2N))^a. 
\eeq
Observe that, in the setting 2. of the Theorem, $\theta_1 = \sigma_1=0$.

Therefore, Proposition \ref{s2-2-p1} applies; by Kolmogorov's continuity criterion, there is a version of $u_N$ with jointly 
H\"older continuous sample paths of exponent $\eta\in(0,\gamma)$ in both variables. In the sequel we will consider this version that we will still denote by $u_N$. 
\smallskip

\noindent{\em Bounds for $L^p$ moments of $u_N$.}
 Assume that condition {\bf (C1)} (1) holds. Then, for $N$ large enough, we have
  $L(b_N) > \frac{8}{\gamma} L(\sigma_N)^2$. 
On the other hand, if condition {\bf (C1)} (2) is satisfied, then  $L(b_N) > \frac{8}{\gamma} L(\sigma_N)^2$ 
 holds for any  $N\ge 2$.  We can therefore apply Proposition \ref{Kol-1d} to see that for  any 
 $ p\in\left(\frac{2}{\gamma}, \frac{\theta_2(\ln (2N))^\delta}{4\sigma_2^2(\ln (2N))^{2a}}\right]$,  $R>0$, $N$ large enough (if necessary).

\begin{align}
\label{s2-3.2}
E\Big( \sup_{t\in [0,T]} \sup_{|x|\leq R} |u_N(t,x)|^p\Big) \leq &2^{p-1}\Vert u_0\Vert_{\infty,R}^p
\notag\\
& + C(p,T,R) \left[ {\mathcal M}_1^p +
{\mathcal M}_2^p(N)\, {\mathcal M}_3^p(N) \, e^{2pT\sqrt{L(b_N)}} \right],
\end{align}
where 
\begin{align}
\label{defms}
{\mathcal M}_1=&  \|u_0\|_{\gamma_1}+  \|v_0\|_{\infty,R+T}+ \theta_1 + \sqrt p\  \sigma_1, 
\quad 
{\mathcal M}_2(N)= L(b_N)+\sqrt{p}\ L(\sigma_N), \notag\\
{\mathcal M}_3(N)=& \frac{e^{-1} \|v_0\|_{\infty,R}}{\sqrt{L(b_N)}} 
+ 2 \|u_0\|_{\infty,R} + C\left[ \frac{\theta_1}{L(b_N)} + \frac{\sigma_1}{L(\sigma_N)}\right].
\end{align}

\noindent{\em Existence and uniqueness of a global solution.}
 Fix $R>0$. For any $N\ge 2$, set
\begin{equation} 				
\label{tauN}
\tau_N:=\inf\Big\{ t>0\; : \; \sup_{|x|\leq R} |u_N(t,x)|\geq N\Big\} \wedge T.
\end{equation}
The uniqueness of the solution and the local property of stochastic integrals imply that $u_N(t,x)=u_{N+1}(t,x)$ a.s. for $t\leq \tau_N$. 
Hence, almost surely, $\left(\tau_N\right)_{N\ge 2}$ is an increasing sequence, bounded by $T$. 

Assume that $\sup_N \tau_N=T$, a.s., and thus $\left\{ t\le \tau_N\right\}\uparrow \Omega$, a.s.  
On $\left\{ t\le \tau_N\right\}$, define $(u(t,x), (t,x)\in[0,T)\times \RR)$ by
$u(t,x)=u_N(t,x)$; then   $u(t,x)=u_M(t,x)$, for every $M\ge N$. 
The random variable $u(t,x)$ is well-defined and moreover, \eqref{n3} holds for any $(t,x)$, a.s.
Indeed, the definition of $\tau_N$ implies that on $\left\{ t\le \tau_N\right\}$,
\begin{align*}
u(t,x) =&  I_0(t,x) + \int_0^t ds \int_{\RR} dy\ G(t-s,x-y) b_N(u_N(s,y))\\
& +  \int_0^t \int_{\RR} G(t-s,x-y) \sigma_N(u_N(s,y)) W(ds,dy).
\end{align*}
But on $\left\{t\le \tau_N\right\}$, $b_N(u_N(s,y))= b(u_N(s,y))= b(u(s,y))$ and $\sigma_N(u_N(s,y))= \sigma(u_N(s,y))=\sigma(u(s,y))$. Since 
$\left\{ t\le \tau_N\right\} \uparrow \Omega$ a.s., we conclude that $(u(t,x), (t,x)\in[0,T)\times \RR)$ satisfies \eqref{n3}. 
Notice that, in this case, the stochastic integral in \eqref{n3} is not defined in $L^2(\Omega)$, 
but using instead an  extension defined a.s. (see e.g. \cite{Da-SS-Book}).

The last part of the proof is devoted to check that indeed, $\sup_N \tau_N=T$ a.s.
This will follow from the property
\begin{equation}			
\label{cv_tauN}
\lim_{N\to\infty} P(\tau_N < T) =0,
\end{equation}
that we now establish.
Let $C(p,T,R,N)$ denote the right-hand side of \eqref{s2-3.2}. To emphasise  the terms that depend on $N$, we write
\beq
\label{s3-2.3}
C(p,T,R,N) = C_1(p,T,R) + C_2(p,T,R,N),
\eeq
with
\begin{align*}
C_1(p,T,R) &= 2^{p-1}\Vert u_0\Vert_{\infty,R}^p + C(p,T,R){\mathcal M}_1^p,\\
C_2(p,T,R,N)  &= C(p,T,R) {\mathcal M}_2^p(N)\, {\mathcal M}_3^p(N) \, e^{2pT\sqrt{L(b_N)}}.
 \end{align*}
 Fix $p\in \left(\frac{2}{\gamma}, \frac{\theta_2(\ln(2N))^\delta}{4 \sigma_2^2(\ln(2N))^{2a}}\right]$. Applying Chebychev's inequality and then \eqref{s2-3.2}, we have
 \begin{align}
 \label{cheby}
P\left(\tau_N < T\right) &\le P\Big(\sup_{t\in[0,T]} \sup_{|x|\le R} |u_N(t,x)|\ge N\Big) 
\le N^{-p} E\Big(\sup_{t\in[0,T]}\sup_{|x|\le R} |u_N(t,x)|^p\Big)\notag\\
&\le N^{-p} C(p,T,R,N) = N^{-p}\left[C_1(p,T,R) + C_2(p,T,R,N)\right].
\end{align}
 Assume that
 \beq
\label{s3-2.4}
C_2(p,T,R,N) = {\rm o}(N^p).
\eeq 
Then, from \eqref{cheby}, we clearly obtain \eqref{cv_tauN}.

For the proof of \eqref{s3-2.4}, we first write the expressions of 
 ${\mathcal M}_2(N)$ and ${\mathcal M}_3(N)$  in \eqref{defms}, substituting $L(b_N)$ and $L(\sigma_N)$ by their respective values 
 given in \eqref{s2-3.1}. 
Because of the property $\sup_{N\geq 2} {\mathcal M}_3(N) \leq C$, we obtain 
\beqn
C_2(p,T,R,N) =  \tilde C_2(p,T,R) \exp\left(p \delta\ln[\ln(2N)] + 2pT \theta_2^{1/2}[\ln (2N)]^{\delta/2}\right).
\eeqn
Since $\delta <2$, this implies \eqref{s3-2.4}.

Let $M>0$ be as in Claim 1. From the above discussion, we deduce \eqref{bd-as} by taking $R=M$. Similarly,  Claim 2. is obtained by considering $R=\rho+T$.

The proof of the theorem is complete.
\end{proof}
\section{The stochastic wave equation in dimensions 2 and 3} 
\label{s3}
The aim of this section is to discuss the same questions as in Section \ref{s2} when $d = 2,3$, and the noise $W$ is white in time and  
coloured  in space. It is well-known that for dimensions $d\ge 2$, 
if $W$ is a space-time white noise, the stochastic convolution in \eqref{n3} fails to be a well-defined random variable in $L^2(\Omega)$, 
for almost any $(t,x)\in[0,T]\times \RR^d$. This is the case even if $\sigma$ is constant.
However, we can still obtain a random field solution of \eqref{n3} by taking a smoother noise in the spatial variable 
(see e.g. \cite{Wal}). This leads to the introduction in the next subsection \ref{s3-0} of a new class of Gaussian noises. 
\medskip

\subsection{Spatially homogeneous Gaussian noise and stochastic 
integrals}
\label{s3-0}

Let $\Lambda$ be a non-negative definite distribution in $\mathcal{S}^\prime(\RR^d)$. 
By the Bochner-Schwartz theorem (see e.g. \cite[Chap. VII, Thoerem XVIII]{Schwartz}), $\Lambda$ is the Fourier transform 
of a non-negative, tempered, symmetric measure
$\mu$ on $\RR^d$ called the spectral measure of $\Lambda$.
 In particular, $\Lambda$ is also a tempered distribution. 

On a complete probability space $(\Omega,\mathcal{A},P)$, 
we consider a Gaussian process $\{W(\varphi), \varphi\in \mathcal{C}_0(\RR^{d+1}\}$, indexed by the set of Schwartz test functions,
 with mean zero and covariance 
\beq
\label{s3.1}
E\left(W(\varphi)W(\psi)\right) = \int_0^\infty dt \int_{\RR^d} \Lambda(dx)\ \left(\varphi(t)\ast \tilde\psi(t)\right)(x),
\eeq
where ``$\ast$" denotes the convolution operator in the spatial variable and $\tilde\psi$ means reflection in the spatial variable too.

\noindent We will consider spatial covariances $\Lambda$ satisfying the following hypothesis (\cite{Dal}):
\medskip

\noindent {\bf (h0)} The spectral measure $\mu=\cF^{-1}\Lambda$ is such that
\beq
\int_{\RR^d} \frac{\mu(d\zeta)}{1+|\zeta|^2} < \infty.
\eeq
From \eqref{i2}, we see that this is equivalent to  
$\int_0^T dt \int_{\RR^d} \mu(d\zeta) \left\vert \cF G(t)(\zeta)\right\vert^2<\infty$.

Consider a jointly measurable adapted process $Z=\left(Z(t,x), (t,x)\in[0,T]\times \RR^d\right)$ such that $\sup_{(t,x)\in[0,T]\times \RR^d}E(|Z(t,x|^p) < \infty$, for some $p\in [ 2, \infty)$, and assume {\bf (h0)}. Then, the stochastic integral 
\beqn
((GZ)\cdot W)(t,x) := \int_0^t \int_{\RR^d} \ G(t-s,x-y) Z(s,y)\ W(ds,dy)
\eeqn
is a well-defined random variable. Moreover, for any $x\in\RR^d$, the process $((GZ)\cdot W)(t,x), t\in[0,T])$ is a martingale with respect to
 the natural filtration generated by $W$.
\smallskip

We  will consider the particular class of covariances $\Lambda$ described in {\bf(h1)} below.
 \smallskip
 
\noindent{\bf(h1)}\ \ 
 $\Lambda$ is an absolutely continuous measure, $\Lambda(dx)= f(x) dx$, $f\ge 0$.
   Its spectral measure $\mu=\cF^{-1}\Lambda$ is such that, for all signed measures $\Phi$ and $\Psi$ with finite total variation, 
  \beq
  \label{PG}
  \int_{\RR^d} \int_{\RR^d}  \Phi(dx)\ \Psi(dy) f(x-y) = C \int_{\RR^d} \mu(d\zeta) \cF\Phi(\zeta) \overline{\cF\Psi(\zeta)}.
  \eeq
  \begin{remark} \label{rk4.1}
  Assume 
 $\int_{\RR^d} \mu(d\zeta) \big[ |\cF(\Vert\Phi\Vert)(\zeta)|^2 +  |\cF(\Vert\Psi\Vert)(\zeta)|^2\big] <\infty$,
  where  the notation $\Vert\cdot\Vert$ stands for the total variation. Suppose also that $f:\RR^d\rightarrow [0,+\infty]$
  is lower semicontinuous. Then if $\Phi=\Psi$, \cite[Corollary 3.4]{F-K-2013} implies the validity of  \eqref{PG} with $C= (2\pi)^{-d}$. 
  By a polarity argument, \eqref{PG} can be extended to $\Phi\ne\Psi$ {\rm(}see \cite[p. 487]{KhosXiao}{\rm)}.
  \end{remark}
 Assume {\bf(h0)} and {\bf(h1)}. Since for any  $t>0$, $G(t,dx)$ is a non-negative finite measure with compact support, using \eqref{PG}
 with $\Phi = G(t,dx)$ and $\Psi = G(s,dy)$, $s,t >0$, we have
  \beq
  \label{PGwaves}
  \int_{\RR^d} \int_{\RR^d} \!G(t,dx)\ G(s,dy) f(x-y) = \frac{1}{(2\pi)^d} \int_{\RR^d} \!\mu(d\zeta) \cF G(t,\cdot)(\zeta) \overline{\cF G(s,\cdot)(\zeta)}.
  \eeq
  In particular,
  \begin{equation} 
 \label{s3.11Bis}
J(t) := \int_{\RR^d} \int_{\RR^d} G(t,dy) G(t,dy) f(x-y) = \frac{1}{(2\pi)^d} \int_{\RR^d} \mu(d\zeta) |\cF G(t)(\zeta)|^2. 
\end{equation}
Using \eqref{i2}, we have
$|\cF G(t)(\zeta)|^2\le \frac{2}{1+|\zeta|^2}\ 1_{\{|\zeta|\ge 1\}} + t^2 \ 1_{\{|\zeta| < 1\}}
\le \frac{2(1+t^2)}{1+|\zeta|^2}$ for $t>0$. 
Hence, 
\beq \label{cmu}
J(t) \le 2 (1+t^2) C_\mu,\quad {\rm where } \quad 
C_\mu:=  \frac{1}{(2\pi)^d} \int_{\RR^d} \frac{\mu(d\zeta)}{1+|\zeta|^2} < \infty,
\eeq
which implies, $\sup_{t\in[0,T]} J(t) < \infty$.

Assuming {\bf(h0)} and {\bf(h1)}, the stochastic integral $((GZ)\cdot W)(t,x)$ 
satisfies the following sharp version of the Burkholder Davies Gundy inequality
\begin{align}
\label{s3.4}
&\left\Vert ((G Z)\cdot W)(t,x)\right\Vert_p^p  \le \left(2\sqrt p\right)^p \notag \\
&\; \times E\Big(\!\int_0^t \!ds \! \int_{\RR^d} \! \int_{\RR^d}\! \! G(t-s,x-dy) G(t-s,x-dz) f(y-z) Z(s,y)Z(s,z)\Big)^{\frac{p}{2}}
\end{align}
(see e.g. \cite{Da-SS-Book}, \cite{nq-2007}). 
\smallskip


We end this section with a technical lemma related with the identity \eqref{PG}.  
For $d=3$, with a different proof, the result can be found in \cite[Lemma 6.5]{HHN}.
\begin{lemma}
\label{l-a-1}
Let $d\ge 1$, $t>0$ and $G(t)$ be the fundamental solution of the wave operator on $\rde$. Let $\varphi$, $\psi$ be bounded Borel measurable functions defined on $\rde$. Let $\Lambda$ be a symmetric measure satisfying {\bf (h1)}, with spectral measure $\mu = \mathcal{F}^{-1}\Lambda$ satisfying {\bf (h0)}. Then, for any $s, t>0$ and $z\in\rde$, we have
\begin{align}
\label{a.1}
&\int_{\rde} \int_{\rde} \varphi(x) G(t,dx) \psi(y) G(s,dy) f(x-y+z)\notag \\
&\qquad \qquad = 
\frac{1}{(2\pi)^d}\int_{\rde}\mathcal{F}\left(\varphi G(t)\right)(\xi)
\overline{\mathcal{F}\left(\psi G(s)\right)(\xi)} e^{-iz \cdot \xi}\ \mu(d\xi).
\end{align}
\end{lemma}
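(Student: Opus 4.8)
The plan is to reduce the identity \eqref{a.1} to the already-established covariance formula \eqref{PG} by absorbing the functions $\varphi$, $\psi$ and the translation $z$ into the measures to which \eqref{PG} is applied. First I would set $\Phi(dx) = \varphi(x)\, G(t,dx)$ and $\Psi(dy) = \psi(y)\, G(s,dy)$. Since $G(t,dx)$ and $G(s,dy)$ are non-negative finite measures with compact support (as recalled before \eqref{PGwaves}) and $\varphi$, $\psi$ are bounded Borel functions, $\Phi$ and $\Psi$ are signed measures with finite total variation; moreover $\|\Phi\| \le \|\varphi\|_\infty\, G(t,\cdot)(\RR^d)$ and similarly for $\Psi$, so the finiteness condition $\int \mu(d\xi)\,[|\cF(\|\Phi\|)(\xi)|^2 + |\cF(\|\Psi\|)(\xi)|^2] < \infty$ required for \eqref{PG} follows from {\bf (h0)} together with the bound $|\cF G(t)(\xi)|^2 \le 2(1+t^2)/(1+|\xi|^2)$ used in \eqref{cmu}.

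Next I would handle the translation by $z$. Writing $\Psi_z(dy) := \Psi(dy - z)$ for the translate of $\Psi$ by $z$, one has the elementary identity
\begin{equation*}
\int_{\rde}\int_{\rde} \Phi(dx)\, \Psi(dy)\, f(x - y + z) = \int_{\rde}\int_{\rde} \Phi(dx)\, \Psi_z(dy')\, f(x - y'),
\end{equation*}
obtained by the change of variable $y' = y + z$ in the inner integral. Applying \eqref{PG} to the pair $(\Phi, \Psi_z)$ gives the right-hand side as $(2\pi)^{-d}\int_{\rde} \mu(d\xi)\, \cF\Phi(\xi)\, \overline{\cF\Psi_z(\xi)}$ (here $C = (2\pi)^{-d}$ under {\bf (h1)}). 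It then remains to unwind the Fourier transforms: $\cF\Phi(\xi) = \cF(\varphi\, G(t))(\xi)$ and $\cF\Psi(\xi) = \cF(\psi\, G(s))(\xi)$ directly from the definitions, while the translation rule for the Fourier transform gives $\cF\Psi_z(\xi) = e^{-iz\cdot\xi}\, \cF\Psi(\xi)$, hence $\overline{\cF\Psi_z(\xi)} = e^{iz\cdot\xi}\, \overline{\cF\Psi(\xi)}$. Wait — I should be careful with the sign convention; with $\cF g(\xi) = \int e^{-ix\cdot\xi} g(x)\,dx$ as in \eqref{i2}, translating the measure by $+z$ produces the factor $e^{-iz\cdot\xi}$ in $\cF\Psi_z$, so $\overline{\cF\Psi_z(\xi)} = e^{+iz\cdot\xi}\overline{\cF\Psi(\xi)}$; matching this against the stated \eqref{a.1} (which has $e^{-iz\cdot\xi}$ multiplying the product) just amounts to tracking whether the translation is by $+z$ or $-z$ in $f(x-y+z)$, and I would pin down the convention so the final factor reads exactly $e^{-iz\cdot\xi}$ as written.

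The only genuinely delicate point is justifying the applicability of \eqref{PG} to measures of the form $\varphi\, G(t,dx)$: the identity \eqref{PG} as stated in {\bf (h1)} is for arbitrary finite signed measures, so in principle no extra work is needed, but one must make sure the total-variation and spectral-integrability hypotheses behind it (cf. Remark \ref{rk4.1}, where lower semicontinuity of $f$ and finiteness of $\int \mu(d\xi)|\cF(\|\Phi\|)(\xi)|^2$ are invoked) are met for these particular $\Phi$, $\Psi_z$; as noted above this is exactly where {\bf (h0)} and the decay estimate for $|\cF G(t)(\xi)|^2$ enter. Everything else is bookkeeping: closing up the two iterated integrals via Fubini (legitimate since $f \ge 0$ and the measures are finite) and the Fourier translation identity. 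I expect this main obstacle to be routine given the tools already developed in the section, so the proof should be short.
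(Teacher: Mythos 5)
Your argument is essentially the paper's proof: absorb $\varphi,\psi$ and the shift into finite signed measures with compact support, apply \eqref{PG} (which {\bf (h1)} grants for all signed measures of finite total variation, so the Remark~\ref{rk4.1}-type verification you sketch is not actually needed), and finish with the Fourier translation rule; the paper merely puts the translation on the $\varphi\, G(t)$ factor rather than on $\psi\, G(s)$. The only slip is the direction of the shift — since $f(x-y+z)=f\big(x-(y-z)\big)$ you must translate $\Psi$ by $-z$, which then gives $\overline{\cF\Psi'(\xi)}=e^{-iz\cdot\xi}\,\overline{\cF\big(\psi G(s)\big)(\xi)}$ and hence exactly the factor $e^{-iz\cdot\xi}$ in \eqref{a.1} — a bookkeeping point you already flagged yourself.
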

\begin{proof}
By applying the translation $\tau_{z}  x = x+z$, the left-hand side of \eqref{a.1} equals 
\beqn
\int_{{\RR}^d} \int_{{\RR}^d} \varphi(\tau_{-z} x) \tau_{-z } G(t,dx) \psi(y) G(s,dy) f(\tau_{z} x-y) 
= \int_{{\RR}^d} \int_{{\RR}^d}f(w-y)\  \Phi(dw) \Psi(dy),
\eeqn
where $\Phi(dw)= \varphi(\tau_{-z} w)\tau_{-z} G(t,dw)$ and $\Psi=\psi(y) G(s,dy)$. 
We recall that $\tau_{-z} G(t,dw)$ stands for the
translation of the measure $G(t,dw)$ by $-z$ in the distribution sense (see e.g. \cite[p. 55]{Schwartz}).

Because of the assumptions on $\varphi$ and $\psi$, the measures $\Phi(dw)$ and   $\Psi(dy)$
 are signed measures with finite total variation. We can therefore apply \eqref{PG} to deduce
\beqn
\int_{{\RR}^d} \int_{{\RR}^d}f(z-y)\ \Phi(d w) \Psi(dy) = \frac{1}{(2\pi)^d}\int_{{\RR}^d}\mathcal{F}f(\xi) \mathcal{F}(\Phi)(\xi)
\overline{\mathcal{F}(\Psi)(\xi)}\ d\xi.
\eeqn
Using the identities 
$\mathcal{F}(\Phi)(\xi) = \mathcal{F}\left(\tau_{-z}\varphi(\cdot) \tau_{-z}G(t,\cdot)\right)(\xi) = e^{-i\xi\cdot z}\mathcal{F}\left(\varphi G(t,\cdot)\right)(\xi)$,
 we obtain \eqref{a.1}.  
 \end{proof}
\subsection{Qualitative moment estimates}			
\label{s3-1}
We introduce a set of assumptions that ensure the existence and uniqueness of a random field solution to \eqref{n3}.
\smallskip

\noindent{\bf (he)}

\noindent 
{\bf (i)}  The functions $b$ and $\sigma$ are Lipschitz continuous; 
thus  \eqref{c-L}  holds. 

\noindent {\bf (ii)} $W$ is a spatially homogeneous noise as described in Section \ref{s3-0}. 
Its covariance and spectral measures ($\Lambda$ and $\mu$, respectively) satisfy {\bf (h0)} and {\bf (h1)}.

\noindent {\bf  (iii)} The initial values $u_0$, $v_0$ are such that the function $(t,x)\mapsto I_0(t,x)$ defined in \eqref{decom} is continuous and
\beq
\label{s3.5}
\sup_{(t,x)\in[0,T]\times \RR^d} \left\vert I_0(t,x)\right\vert < \infty.
\eeq
\begin{theorem}
\label{randomfield-multi}
Assume that {\bf (he)} is satisfied. Then there exists a random field solution
$\left(u(t,x), (t,x)\in[0,T]\times \RR^d\right)$ to \eqref{n3}, and for any $p\in [ 1, \infty)$, 
\beq
\label{s3.6}
\sup_{(t,x)\in[0,T]\times \RR^d} \Vert u(t,x)\Vert_p < \infty.
\eeq
This solution is unique in the class of jointly measurable, adapted processes $u$ satisfying \eqref{s3.6} with $p=2$.  
\end{theorem}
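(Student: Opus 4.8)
The plan is to prove Theorem~\ref{randomfield-multi} by a standard Picard iteration scheme in the space of adapted processes equipped with the family of seminorms $\NN(\cdot)$ defined in \eqref{norm}, exactly as in the classical theory of Dalang--Walsh. First I would set $u^{(0)}(t,x) = I_0(t,x)$ and define inductively
\beqn
u^{(n+1)}(t,x) = I_0(t,x) + \int_0^t ds\,[G(s)\ast b(u^{(n)}(t-s,\cdot))](x) + \int_0^t\!\!\int_{\RR^d} G(t-s,x-dy)\sigma(u^{(n)}(s,y))\,W(ds,dy),
\eeqn
checking along the way that each $u^{(n)}$ is jointly measurable, adapted, and has finite $p$-th moments uniformly on $[0,T]\times\RR^d$; the latter is needed to legitimise the stochastic integral via the criterion recalled in Section~\ref{s3-0} (here {\bf (h0)} is exactly what makes the integral well-defined).

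The core estimates mirror those in the proof of Proposition~\ref{lem1.1}. For the drift term $I_1$, Minkowski's inequality, the linear growth bound \eqref{c-L}, and $\int_{\RR^d} G(s,dy) = s$ (which follows from \eqref{i2} by letting $|\zeta|\to 0$, or from the explicit formulas) give, after multiplying by $e^{-\alpha t}$ and using \eqref{*}, a bound of the form $\NN(I_1) \le \alpha^{-2}[c(b) + L(b)\NN(u^{(n)})]$. For the stochastic term $I_2$, I would apply the sharp Burkholder--Davis--Gundy inequality \eqref{s3.4} together with the identity \eqref{s3.11Bis} and the key bound \eqref{cmu}, namely $J(t)\le 2(1+t^2)C_\mu$; this replaces the role played by $G^2(t,x)=\frac12 G(t,x)$ in dimension one. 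Using \eqref{s3.4} one gets
\beqn
\Vert I_2(t,x)\Vert_p^2 \le (2\sqrt p)^2 \int_0^t J(t-s)\,[c(\sigma)+L(\sigma)\sup_y\Vert u^{(n)}(s,y)\Vert_p]^2\,ds,
\eeqn
and since $\int_0^t J(t-s)e^{-2\alpha s}\,ds$ is bounded by $2C_\mu\int_0^\infty(1+s^2)e^{-2\alpha s}\,ds = O(\alpha^{-1})$ for $\alpha$ large, one obtains $\NN(I_2)\le C\sqrt{p}\,(1+\alpha^{-1/2})\alpha^{-1/2}[c(\sigma)+L(\sigma)\NN(u^{(n)})]$, hence a contraction constant that can be made strictly less than $1$ by choosing $\alpha$ large enough. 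The same computation applied to the differences $u^{(n+1)}-u^{(n)}$ (using the Lipschitz property of $b,\sigma$ rather than linear growth, and noting $b(0),\sigma(0)$ drop out) shows $\NN(u^{(n+1)}-u^{(n)}) \le \tfrac12 \NN(u^{(n)}-u^{(n-1)})$ for $\alpha$ large, so $(u^{(n)})_n$ is Cauchy in the $\NN$-seminorm and converges to a limit $u$ which, by passing to the limit in the Picard recursion, solves \eqref{n3}; the uniform moment bound \eqref{s3.6} follows from the geometric series, first for $p$ in the range where the contraction works and then for all $p\in[1,\infty)$ by H\"older's inequality (or by redoing the estimate with a larger $\alpha$, since $\NN$ with $\alpha$ large controls all smaller moments on $[0,T]$ up to the factor $e^{\alpha T}$).

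Uniqueness in the class of jointly measurable adapted processes satisfying \eqref{s3.6} with $p=2$ follows by the same contraction argument applied to the difference of two solutions: if $u$ and $v$ both solve \eqref{n3} with $\sup_{t,x}\Vert u(t,x)\Vert_2, \sup_{t,x}\Vert v(t,x)\Vert_2<\infty$, then $w=u-v$ satisfies $\mathcal{N}_{\alpha,2}(w) \le (\text{const}\cdot\alpha^{-1/2})\,\mathcal{N}_{\alpha,2}(w)$, forcing $\mathcal{N}_{\alpha,2}(w)=0$ for $\alpha$ large, i.e.\ $u=v$ a.s.\ for every $(t,x)$. The main obstacle, and the place requiring genuine care rather than routine book-keeping, is the treatment of the stochastic convolution when $G(t,\cdot)$ is only a measure (the case $d=3$, and partly $d=2$): one must justify that $\sigma(u^{(n)}(s,y))$ integrated against $G(t-s,x-dy)$ makes sense, that the BDG-type inequality \eqref{s3.4} applies, and that the covariance computations via Lemma~\ref{l-a-1} and \eqref{PGwaves} are valid for the integrands appearing here. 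This is precisely the technical content already developed in Section~\ref{s3-0} and in the references \cite{Dal}, \cite{Da-SS-Book}, \cite{nq-2007}, so I would invoke those results, but it is the step where the argument is not a verbatim copy of the one-dimensional case.
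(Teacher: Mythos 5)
Your argument is correct in substance, but note that the paper itself does not prove Theorem \ref{randomfield-multi}: it simply invokes \cite[Theorem 13]{Dal} (for $u_0=v_0=0$) and \cite[Theorem 4.3]{dalang-quer-2011} (for general initial data), and those classical results are proved by exactly the Picard scheme you describe. So your proposal amounts to reconstructing the argument behind the citation, using the paper's own machinery: the weighted seminorms $\NN$ of \eqref{norm}, the bound $J(t)\le 2(1+t^2)C_\mu$ from \eqref{cmu}, and the BDG-type inequality \eqref{s3.4}; indeed the estimates you outline reappear almost verbatim in the proof of Proposition \ref{p-s3.1} (compare \eqref{s3.7} and \eqref{s3.13}--\eqref{s3.15-2}), and the contraction-for-large-$\alpha$ step, the passage to the limit, the extension of \eqref{s3.6} to all $p$ (choosing $\alpha=\alpha(p)$ for each $p\ge 2$ and using Jensen for $p\in[1,2)$), and the uniqueness via $\mathcal{N}_{\alpha,2}$ applied to the difference of two solutions are all sound. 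Two points of comparison. First, the quoted theorems need only {\bf (h0)}, since Dalang's proof runs the $L^2$/$L^p$ estimates through the spectral measure $\mu$, whereas your use of \eqref{s3.4} with the covariance density $f$ presupposes {\bf (h1)}; this is harmless here because {\bf (he)}(ii) assumes {\bf (h1)}, but it makes your self-contained proof slightly less general than the cited one. Second, you should state explicitly that the limit of the Picard iterates admits a jointly measurable, adapted version (e.g.\ via $L^2$-continuity of $(t,x)\mapsto u(t,x)$, or by taking a measurable modification), so that it belongs to the uniqueness class in the statement; this is routine but is the kind of detail that the contraction argument alone does not deliver.
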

In the case $u_0 = v_0 = 0$, this follows from  \cite[Theorem 13]{Dal} applied to the wave operator. For non-null initial conditions, this follows from \cite[Theorem 4.3]{dalang-quer-2011}.
\begin{prop}
\label{p-s3.1}
In addition to {\bf (he)}, we assume that the initial values $u_0$, $v_0$, satisfy the following conditions:
\begin{enumerate}
\item for $d=2$,  $u_0$ is a bounded  function of class $C^1$ with bounded partial derivatives; $v_0$ is continuous and bounded;
\item for $d=3$, $u_0$ is a  
 bounded function of class $C^2$ with bounded second order partial derivatives; $v_0$ is continuous and bounded.
\end{enumerate}
We also suppose that the covariance measure $\Lambda$ satisfies  {\bf (h1)}, and the Lipschitz constants $L(b)$, $L(\sigma)$ are such that
$L(b)\geq \left( 2^{12}\, 3^2\,  C_\mu^2\,  L(\sigma)^4\right)\vee \frac{1}{4}$,  where $C_\mu$ is given in \eqref{cmu}. 
Then, for any
$p\in \Big[ 2, \frac{  \sqrt{L(b)}}{ 2^5\, 3 \, C_\mu \, L(\sigma)^2}\Big]$ we have
\beq
\label{p-s3.1-e1}
\mathcal{N}_{2\sqrt{L(b)},p}(u)\le C\Big[\mathcal{T}_0
 + \frac{c(b)}{L(b)} + \frac{c(\sigma)}{L(\sigma)}\Big],
 \eeq
 where $C$ is a universal constant and
 \beq
 \label{p-s3.1-e2}
  \mathcal{T}_0 = 
  \begin{cases}
  \|u_0\|_\infty + \frac{1}{\sqrt{L(b)}} \, \big(  \|\nabla u_0\|_\infty + \|v_0\|_\infty\big) , & {\text {if}}\ \ d=2,\\
  \|u_0\|_\infty + \|\Delta u_0\|_\infty + \frac{1}{\sqrt{L(b)}} \|v_0\|_\infty, & {\text {if}}\ \ d=3.
  \end{cases}
\eeq
As a consequence, we deduce that for $t\in [0,T]$ and $p\in \Big[ 2, \frac{  \sqrt{L(b)}}{ 2^5\, 3 \, C_\mu \, L(\sigma)^2}\Big]$,
\beq
 \label{p-s3.1-sup}
 \sup_{x\in\RR^d}E(|u(t,x)|^p) \le C^p\ e^{2pt\sqrt{L(b)}} \Big[\mathcal{T}_0 + \frac{c(b)}{L(b)} + \frac{c(\sigma)}{L(\sigma)}\Big]^p.
 \eeq
\end{prop}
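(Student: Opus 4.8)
The plan is to mimic the structure of the proof of Proposition \ref{lem1.1}, decomposing $u = I_0 + I_1 + I_2$ as in \eqref{sol-1}--\eqref{decom} and estimating the seminorm $\NN(u)$ term by term with $\alpha$ chosen at the end. First I would bound $\NN(I_0)$: under hypothesis {\bf(i)}--{\bf(iii)} of {\bf(he)} together with the extra regularity on $u_0, v_0$ in cases $d=2,3$, the function $I_0(t,x) = [G(t)\ast v_0](x) + \tfrac{\partial}{\partial t}[G(t)\ast u_0](x)$ admits a bound of the form $\sup_x|I_0(t,x)| \le \|u_0\|_\infty + c\, t(\|\nabla u_0\|_\infty + \|v_0\|_\infty)$ for $d=2$ (using that $\tfrac{\partial}{\partial t}[G(t)\ast u_0]$ rewrites, after the classical Kirchhoff/Poisson representation, in terms of $u_0$ and $\nabla u_0$ averaged over balls/spheres of radius $t$), and similarly involving $\|\Delta u_0\|_\infty$ for $d=3$; combined with $\sup_{t\ge 0}(t e^{-\alpha t}) = (e\alpha)^{-1}$ from \eqref{*} this yields $\NN(I_0) \le \mathcal{T}_0$-type control after the substitution $\alpha = 2\sqrt{L(b)}$.

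Next, for $I_1$ I would use Minkowski's inequality and \eqref{c-L} exactly as in \eqref{T1-1}, together with $\int_{\RR^d} G(t,dx) = t$ (valid in all dimensions $d=1,2,3$), to get $\NN(I_1) \le \alpha^{-2}[c(b) + L(b)\NN(u)]$. For $I_2$ the key tool is the sharp Burkholder--Davis--Gundy inequality \eqref{s3.4}: applying it, then Cauchy--Schwarz in the $(y,z)$ integration and \eqref{c-L}, and then bounding the resulting double spatial integral of $f(y-z)$ against $J(t-s)$ via \eqref{PGwaves}--\eqref{s3.11Bis}, one obtains $\|I_2(t,x)\|_p^2 \lesssim p\int_0^t J(t-s)\,[c(\sigma)^2 + L(\sigma)^2\sup_y\|u(s,y)\|_p^2]\,ds$. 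Using $J(t)\le 2(1+t^2)C_\mu$ from \eqref{cmu} and the elementary integrals in \eqref{*} (now with the extra polynomial factor from $1+t^2$, which produces a fixed numerical constant after the $\alpha = 2\sqrt{L(b)}$ scaling since $L(b)\ge 1/4$), this gives a bound of the form $\NN(I_2) \le C\,\tfrac{\sqrt{p\,C_\mu}}{\alpha}\,[c(\sigma) + L(\sigma)\NN(u)]$ for a universal constant $C$.

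Collecting the three estimates, the fixed-point-type inequality $\NN(u) \le \mathcal{T}_0\text{-terms} + c(b)\text{-terms} + c(\sigma)\text{-terms} + 2\max\!\big(\tfrac{L(b)}{\alpha^2}, C\tfrac{\sqrt{p\,C_\mu}\,L(\sigma)}{\alpha}\big)\NN(u)$ emerges. Setting $\alpha = 2\sqrt{L(b)}$ makes $\tfrac{L(b)}{\alpha^2} = \tfrac14$, and the hypothesis $p \le \tfrac{\sqrt{L(b)}}{2^5\cdot 3\, C_\mu\, L(\sigma)^2}$ is precisely what forces $C\tfrac{\sqrt{p\,C_\mu}\,L(\sigma)}{\alpha} \le \tfrac14$ as well (the constants $2^{12}3^2$ in the lower bound on $L(b)$ and $2^5 3$ in the upper bound on $p$ are exactly the square/square-root bookkeeping of the numerical constant $C$ coming out of \eqref{s3.4} and the $J(t)$ bound); then the $\NN(u)$ term can be absorbed, $\tfrac{\sqrt p}{\alpha} \le \tfrac{1}{2^5 3 C_\mu L(\sigma)}$ controls the $c(\sigma)$ contribution, and \eqref{p-s3.1-e1} follows with $\mathcal{T}_0$ as in \eqref{p-s3.1-e2}. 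Finally \eqref{p-s3.1-sup} is immediate from the definition \eqref{norm} of $\NN$ with $\alpha = 2\sqrt{L(b)}$. I expect the main obstacle to be the $I_2$ estimate: keeping track of the universal constant through \eqref{s3.4}, the Cauchy--Schwarz step, the reduction to $J$ via \eqref{PGwaves}, and the bound $J(t)\le 2(1+t^2)C_\mu$ — in particular handling the $1+t^2$ factor so that after integrating against $e^{-2\alpha s}$ and rescaling $\alpha=2\sqrt{L(b)}$ (using $L(b)\ge 1/4$) one still lands on a clean universal constant — is the delicate part, and it is what dictates the precise numerical thresholds in the statement.
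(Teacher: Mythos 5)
Your plan is correct and follows essentially the same route as the paper's proof: the same decomposition $u=I_0+I_1+I_2$, the Kirchhoff/Sogge representations for $I_0$ (yielding the $\|\nabla u_0\|_\infty$, resp.\ $\|\Delta u_0\|_\infty$, terms in $\mathcal{T}_0$), Minkowski with $\int_{\RR^d}G(t,dx)=t$ for $I_1$, the sharp Burkholder--Davis--Gundy bound \eqref{s3.4} reduced to $J(t)\le 2(1+t^2)C_\mu$ for $I_2$, and the choice $\alpha=2\sqrt{L(b)}$ with $L(b)\ge\frac14$ and the stated range of $p$ to make the absorption constant equal to $\frac14$. The delicate constant bookkeeping you flag in the $I_2$ term is precisely where the paper spends its effort (via the quantities $\nu_1(\alpha),\nu_2(\alpha)$ in \eqref{s3.12}--\eqref{s3.15-2}), so no genuinely different ideas are involved.
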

\begin{proof}
We will consider the contributions to $\NN$ of each of the terms $I_i(t,x)$ in \eqref{decom}. 
\smallskip

\noindent{\em Estimates of $\NN(I_0)$.}
Consider first the case $d=2$. 
Using (1.11) and (1.12) from \cite{MiSS}, we have for $t >0$ and $x\in \RR^2$ 
\beqn
\big| [G(t) \ast v_0](x) \big|\leq  t \, \|v_0\|_\infty, \quad 
\left|\frac{\partial}{\partial t}  \big[ G(t)\ast u_0\big] (x) \right| \leq C\left\{\|u_0\|_\infty + t\, \|\nabla u_0\|_\infty\right\}.
\eeqn
Since  $\sup_{t\geq 0} (t e^{-\alpha t}) = (e\alpha)^{-1}$,  
 we deduce that for any $\alpha >0$ and $p\in [2,\infty)$, 
\begin{equation} 			
\label{IC-2d}
\NN(I_0) \leq C \Big[ \|u_0\|_\infty + \frac{e^{-1}}{\alpha} \, \big( \|v_0\|_\infty + \|\nabla u_0\|_\infty\big) \Big]. 
\end{equation}

Let $d=3$. Using \eqref{Green-3} and 
$\int_{\RR^3} G(t, dx)=t$,  we obtain, for $t>0$ and $x\in \RR^3$,
\beqn
\vert[G(t) \ast v_0](x)\vert  = \Big\vert\int_{|y|=t} v_0(x-y)\ G(t,dy)\Big\vert
\le \Vert v_0\Vert_\infty \int_{|y|=t} G(t,dy)= t \Vert v_0\Vert_\infty.
\eeqn
By applying the formula
$
\frac{d}{d t} \big( G(t)\ast u_0\big) = \frac{1}{t} \big( G(t)\ast u_0\big) + \frac{1}{4\pi} \int_{\{ |y|\leq 1\} } (\Delta u_0)(.+ty) dy$
(see \cite{Sog}), we have
$\left\vert \frac{d}{d t} \big( G(t)\ast u_0\big)(x)\right\vert \le \Vert u_0\Vert_\infty + \frac{1}{3} \Vert\Delta u_0\Vert_\infty$.
Therefore, 
\begin{equation}			
\label{I0-3d}
{\mathcal N}_{\alpha,p}(I_0) \leq  \frac{e^{-1}}{\alpha} \|v_0\|_\infty +  \|u_0\|_\infty + \frac{1}{3}\, \|\Delta u_0\|_\infty.
\end{equation}
\noindent{\em Estimates of $\NN(I_1)$.}
Use the expression of $I_1(t,x)$ given in \eqref{decom} and then Minkovski's inequality along with $\eqref{c-L}$ to obtain
\begin{align*}
\Vert I_1(t,x)\Vert _p 
&\le \int_0^t ds \int_{\RR^d} G(t-s,dy)\left[c(b) + L(b) \left\Vert u(s,x-y)\right\Vert_p\right] \\
&= \frac{t^2}{2} c(b)+ L(b) \int_0^t ds\ (t-s) \left(\sup_{x\in\RR^d}\left\Vert u(s,x)\right\Vert_p \right).
\end{align*}
From the above estimates, 
an argument similar to that used to prove \eqref{T1-1} implies 
\begin{align}
\label{s3.7}
\NN(I_1) 
&\le c(b) \sup_{t\ge 0}\Big(\frac{t^2}{2} e^{-\alpha t}\Big)\notag\\
&\quad \quad + L(b)\sup_{t\in[0,T]}\int_0^t ds (t-s) e^{-\alpha(t-s)}\Big(\sup_{(s,x)\in[0,T]\times\RR^d} e^{-\alpha s} \Vert u(s,x)\Vert_p\Big)\notag\\
&\le \frac{2e^{-2}}{\alpha^2} c(b) + \frac{1}{\alpha^2}L(b) \NN(u).
\end{align} 

\noindent{\em Estimates of $\NN(I_2)$}
Applying  \eqref{s3.4} with $Z(s,y):= \sigma(u(s,y))$ and then Minkowski's inequality,
we obtain
\begin{align*}
\Vert& I_2(t,x)\Vert_p^2 \le 4p \left\{E\Big[\int_0^t ds\ \int_{\RR^d} \int_{\RR^d} G(t-s,x-dy) G(t-s,x-dz) f(y-z)\right.\\
&\left.\qquad \qquad \times \sigma(u(s,y)) \sigma(u(s,z))\Big]^{\frac{p}{2}}\right\}^{\frac{2}{p}}\\
& \le 4p \int_0^t ds\ \int_{\RR^d} \int_{\RR^d} G(t-s,x-dy) G(t-s,x-dz) f(y-z) 
\Vert\sigma(u(s,y)) \sigma(u(s,z))\Vert_{\frac{p}{2}}\\
& \le  4p \int_0^t ds\ \int_{\RR^d} \int_{\RR^d} G(t-s,x-dy) G(t-s,x-dz) f(y-z) 
\Vert\sigma(u(s,y))\Vert_p \Vert\sigma(u(s,z))\Vert_p.
\end{align*}
Then, from \eqref{c-L} and the inequality $2ab\le a^2+b^2$ (valid for  $a,b\in\RR$), we deduce
\begin{align}
\label{prendos}
\Vert I_2(t,x)\Vert_p^2 &\le 4p \int_0^t ds\ \int_{\RR^d} \int_{\RR^d} G(t-s,x-dy) G(t-s,x-dz) f(y-z)\notag\\
&\qquad\qquad \times\big[c(\sigma)+L(\sigma)\Vert u(s,y)\Vert_p\big]^2\notag\\
&\le 8p\int_0^t ds\ \int_{\RR^d} \int_{\RR^d} G(t-s,x-dy) G(t-s,x-dz) f(y-z)\notag\\
&\qquad\qquad \times \big[c(\sigma)^2 + L(\sigma)^2\Vert u(s,y)\Vert_p^2\big].
\end{align}
Using the notation introduced in \eqref{s3.11Bis}, we can rewrite  \eqref{prendos} as follows
\beq
\label{prendoss}
\Vert I_2(t,x)\Vert_p^2 \le 8p \Big[c(\sigma)^2 \int_0^t ds\  J(t-s) + L(\sigma)^2 \int_0^t ds\ J(t-s) \, \sup_{y\in \RR^d} \Vert u(s,y)\Vert_p^2
\Big].
\eeq
From here, using the change of varables $s\mapsto t-s$, we have
\beq
\label{s3.13}
\NN(I_2)
\le \sqrt{8p}\,  \nu_1(\alpha)\, c(\sigma) + \sqrt{8p} \, \nu_2(\alpha) \, L(\sigma) \NN(u),
\eeq
where the finite constants $\nu_1(\alpha)$ are $\nu_2(\alpha)$ are defined by 
\beq
\label{s3.12}
\nu_1(\alpha):=\sup_{t\in[0,T]} \Big(e^{-2\alpha t}\int_0^t ds J(s)\Big)^{\frac{1}{2}}, \;
\nu_2(\alpha):= \sup_{t\in[0,T]} \Big(\int_0^t ds\ e^{-2\alpha s} J(s)\Big)^{\frac{1}{2}}.
\eeq

Thus, owing to \eqref{s3.7}, \eqref{s3.13}, we deduce  
\begin{align}
\label{s3.14}
\NN(u)& \le  \NN(I_0) + \frac{2e^{-2}}{\alpha^2} c(b) + \sqrt{8p}\,  c(\sigma)\, \nu_1(\alpha)\notag\\
&\quad\quad+ 2\max\left[\frac{L(b)}{\alpha^2}, \sqrt{8p}\, L(\sigma)\, \nu_2(\alpha)\right] \NN(u).
\end{align}
Using 
\eqref{cmu}  and the value of $\sup_{t\geq 0} (t^k e^{-\alpha t})$ for  $k=1,3,$ shown in \eqref{*}, we see that 
\begin{align}
\label{s3.15-1}
\nu_1(\alpha)\le C_\mu^{\frac{1}{2}} \; \sup_{t\in [0,T]} \Big( e^{-2\alpha t} \int_0^t 2\, (1+s^2)\, ds \Big)^{\frac{1}{2}} 
 \leq C_\mu^{\frac{1}{2}} \; \Big( \frac{e^{-1}}{\alpha} + \frac{9}{4}\, \frac{e^{-3}}{\alpha^3} \Big)^{\frac{1}{2}}.
\end{align}
Furthermore, using the 
inequality \eqref{cmu} and computing $\int_0^t s^2 e^{-\alpha s} ds$, we obtain
\begin{align}
\label{s3.15-2}
 \nu_2(\alpha)& \le C_\mu^{\frac{1}{2}} \; \sup_{t\in [0,T]} \Big( \int_0^t 2 (1+s^2)\, e^{-2\alpha s} ds \Big)^{\frac{1}{2}} 
 \leq C_\mu^{\frac{1}{2}} \;  \Big( \frac{1}{\alpha} + \frac{1}{2\alpha^3} \Big)^{\frac{1}{2}}.
\end{align}

Thus, 
\eqref{s3.14}-\eqref{s3.15-2} yield
\begin{align}
\label{s3.16}
\NN(u) 
& \le \NN(I_0) + \frac{2e^{-2}}{\alpha^2} c(b) + \sqrt{8p} \, c(\sigma)\, C_\mu^{\frac{1}{2}}
\left( \frac{e^{-1}}{\alpha} + \frac{9}{4}\, \frac{e^{-3}}{\alpha^3} \right)^{\frac{1}{2}} \notag \\
&  \quad + 2 \max \left[ \frac{L(b)}{\alpha^2}\ , \,  
\sqrt{8p}\,  L(\sigma)\, C_\mu^{\frac{1}{2}}   \Big( \frac{1}{\alpha} + \frac{1}{2\alpha^3} \Big)^{\frac{1}{2}}\right] \NN(u). 
\end{align}
Choose $\alpha^2 = 4 L(b)$. Since by assumption $L(b)\ge \frac{1}{4}$, we have $\alpha\ge 1$, which yields
\beqn
\frac{e^{-1}}{\alpha} + \frac{9}{4}\frac{e^{-3}}{\alpha^3} \le \frac{13}{8 e}L(b)^{-\frac{1}{2}}\quad \mbox{\rm and} \quad 
 \frac{1}{\alpha} + \frac{1}{2\alpha^3} \le \frac{3}{4}L(b)^{-\frac{1}{2}}.
 \eeqn
 Moreover, using once more the assumption $L(b)\geq \big[ 2^{12} \, 3^2 \, C_\mu^2\,  L(\sigma)^4\big] \vee \frac{1}{4}$, 
 we see that for $\alpha^2 = 4L(b)$ and for any $p\in \left[ 2,  \sqrt{L(b)}/\big( 2^5 \, 3\,  C_\mu\,  L(\sigma)^2\big) \right]$, 
 \beqn
\max \Big[ \frac{L(b)}{\alpha^2}\ , \,  
\sqrt{8p}\,  L(\sigma)\, C_\mu^{\frac{1}{2}}  \, \Big( \frac{1}{\alpha} + \frac{1}{2\alpha^3} \Big)^{\frac{1}{2}}   \Big]
 = \frac{1}{4}.
 \eeqn 
Hence, from  \eqref{s3.16}, 
 using the upper bound $p\le \frac{ \sqrt{L(b)}}{  2^5 \, 3\,  C_\mu\,  L(\sigma)^2}$,   \eqref{IC-2d} and \eqref{I0-3d}, we deduce
\begin{align*}
\mathcal{N}_{2\sqrt{L(b)},p}(u)&\le 2 \mathcal{N}_{2\sqrt{L(b)},p}(I_0) + e^{-2}\frac{c(b)}{L(b)} +
\left(\frac{13}{ 3 e 2^{3} }\right)^{\frac{1}{2}}
\frac{c(\sigma)}{L(\sigma)}\\
&\le C_1\mathcal{T}_0
 + C_2\left[ \frac{c(b)}{L(b)} + \frac{c(\sigma)}{L(\sigma)}\right],
 \end{align*} 
with  $\mathcal{T}_0$ defined in \eqref{p-s3.1-e2}.  
  This completes the proof of \eqref{p-s3.1-e1}.
  
   The inequality \eqref{p-s3.1-sup}  
 follows from \eqref{p-s3.1-e1} using the definition of $\NN(u)$.
\end{proof}

\subsection{Uniform bounds on moments}
\label{s3-3}

In this section, we address the problems of Section \ref{s2-2} when $d= 2, 3$, and $W$ is a noise white in time and coloured in space. 
The main task is to prove a result similar to
Proposition \ref{s2-2-p1} on moment estimates of increments 
in time and in space for the solution to equation \eqref{n3} with globally Lipschitz coefficients. 
\smallskip

\noindent{\em Increments of $I_0(t,x)$ in time and space}
\begin{prop}
\label{s3-3-p1}
Let $I_0(t,x)$, $(t,x)\in[0,T]\times \rde$ be as in \eqref{decom} and $R\ge 0$ be fixed. 
\begin{enumerate}

\item Let $d=2$. Assume that  $u_0$ is $\mathcal{C}^1$, $\nabla u_0$ is H\"older continuous with exponent $\gamma_1\in (0,1]$,
 and  $v_0$ is H\"older continuous with exponent $\gamma_2\in(0,1]$. 
Then, there exists a positive constant $C(T,R)$  
such that, for any $t,\bar t\in[0,T]$, and any $x, \bar x \in B(0;R)$, 
\begin{align}
\label{diff-I0-2d}
|I_0(t,x)-I_0(\bar{t},\bar{x})|
& \leq   C(T,R) \left(\|v_0\|_{\infty, R+T} +\Vert v_0\Vert_{\gamma_2} + \Vert \nabla u_0\Vert_{\infty,R+T}
 + \Vert \nabla u_0\Vert_{\gamma_1}\right)\notag\\
&\qquad \qquad\times\left( |t-\bar{t}|^{\gamma_1\wedge\gamma_2}+ |x-\bar x|^{\gamma_1\wedge \gamma_2}\right).
\end{align}

\item Let $d=3$. Assume that $u_0$ is $\mathcal{C}^2$, $\Delta u_0$ is H\"older continuous with exponent $\gamma_1\in(0,1]$, 
 and $v_0$ is H\"older continuous with exponent $\gamma_2\in(0,1]$. 
Then, there exists a positive constant $C(T,R)$ 
such that, for any $t,\bar t\in[0,T]$, and any $x, \bar x \in B(0;R)$, 
\begin{align} 
\label{diff-I0-3d}
|I_0(t,x)-I_0(\bar{t},\bar{x})| \leq & \, 
 C(T,R) \big[ \Vert v_0\Vert_{\gamma_2} + \Vert \nabla u_0\Vert_{\infty,R+T} + \Vert\Delta u_0\Vert_{\gamma_1}\big] \nonumber \\
&\quad  \times\left( |t-\bar t|^{\gamma_1\wedge \gamma_2} + |x-\bar x|^{\gamma_1\wedge \gamma_2}\right).
\end{align} 
\end{enumerate}
\end{prop}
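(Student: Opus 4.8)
The plan is to estimate the increments of the two terms making up $I_0(t,x)$ separately, namely the contribution of $v_0$, which is $[G(t)\ast v_0](x)$, and the contribution of $u_0$, which is $\frac{\partial}{\partial t}[G(t)\ast u_0](x)$, and in each case split a generic increment $I_0(t,x)-I_0(\bar t,\bar x)$ into a pure space increment $I_0(t,x)-I_0(t,\bar x)$ and a pure time increment $I_0(t,\bar x)-I_0(\bar t,\bar x)$, so that it suffices to bound each of these four pieces by $C(T,R)(\text{norms})\,|x-\bar x|^{\gamma_1\wedge\gamma_2}$ or $C(T,R)(\text{norms})\,|t-\bar t|^{\gamma_1\wedge\gamma_2}$, respectively. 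The basic tool is the explicit representation of $G(t)$: for $d=2$, $[G(t)\ast v_0](x)=\frac{1}{2\pi}\int_{|y|<t}\frac{v_0(x-y)}{\sqrt{t^2-|y|^2}}\,dy$, and after the change of variables $y=tw$ this becomes $\frac{t}{2\pi}\int_{|w|<1}\frac{v_0(x-tw)}{\sqrt{1-|w|^2}}\,dw$, with $\int_{|w|<1}(1-|w|^2)^{-1/2}dw<\infty$; for the $u_0$ term one uses the formula $\frac{\partial}{\partial t}[G(t)\ast u_0](x)=[G(t)\ast v_0]$-type expressions together with $\frac{d}{dt}(G(t)\ast u_0)=\frac1t(G(t)\ast u_0)+\frac{1}{2\pi}\int_{|w|\le 1}\nabla u_0(x+tw)\cdot w\,(1-|w|^2)^{-1/2}\,dw$ (the analogue of the $d=3$ formula already invoked in the paper, which uses $\Delta u_0$ rather than $\nabla u_0$). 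For $d=3$ the analogous spherical-mean representations $[G(t)\ast v_0](x)=\frac{t}{4\pi}\int_{|w|=1}v_0(x-tw)\,\sigma(dw)$ and the $\Delta u_0$-formula already quoted in the text are the starting point.

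For the space increments, after the substitutions above the $x$-dependence sits entirely inside the arguments $v_0(x-tw)$, $\nabla u_0(x-tw)$, $\Delta u_0(x-tw)$; subtracting the value at $\bar x$ and pulling the supremum-free integral kernel out, the H\"older seminorms $\Vert v_0\Vert_{\gamma_2}$, $\Vert\nabla u_0\Vert_{\gamma_1}$, $\Vert\Delta u_0\Vert_{\gamma_1}$ directly produce the factor $|x-\bar x|^{\gamma_2}$ or $|x-\bar x|^{\gamma_1}$, the remaining integral being a finite constant depending only on $T$ (through the overall factor $t\le T$) — note no boundedness of the functions themselves is needed here, only on a compact set, which is why the $\Vert\cdot\Vert_{\infty,R+T}$ norms appear and the arguments $x-tw$ stay in $B(0;R+T)$. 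For the time increments one writes $t^{\gamma}$-type differences of the prefactors plus differences coming from (i) the rescaled domain (constant after rescaling to the unit ball/sphere, so this is harmless) and (ii) the arguments $v_0(x-tw)$ vs $v_0(x-\bar t w)$, which again by H\"older continuity give $|(t-\bar t)w|^{\gamma_2}\le|t-\bar t|^{\gamma_2}$; collecting the worst exponent yields $|t-\bar t|^{\gamma_1\wedge\gamma_2}$. The term $\frac1t(G(t)\ast u_0)$ in the $d=3$ derivative formula needs a little care near $t=0$, but $G(t)\ast u_0 = O(t)$ uniformly, so $\frac1t(G(t)\ast u_0)$ is bounded and its time-increment is controlled by the Lipschitz/H\"older regularity of $u_0$; since $u_0\in\mathcal C^2$ this is fine, and similarly for $d=2$.

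The main obstacle I anticipate is bookkeeping the time increment of the $u_0$-contribution: the derivative-of-wave-kernel formulas involve the factor $1/t$ and a rescaled integral, so one must combine the increment of $t\mapsto\frac1t(G(t)\ast u_0)(x)$ with the increment of the $\nabla u_0$ (resp. $\Delta u_0$) integral term, and verify that no spurious negative power of $\min(t,\bar t)$ survives — this is where one genuinely uses $u_0\in\mathcal C^1$ (resp. $\mathcal C^2$) with the stated H\"older continuity of its top derivative, rather than merely H\"older continuity of $u_0$ itself. Once the representations are fixed and the change of variables $y=tw$ is performed, everything reduces to elementary estimates of the form ``prefactor increment $\times$ bounded integral'' plus ``bounded prefactor $\times$ integral of a H\"older increment,'' and assembling the four pieces with $\gamma_1\wedge\gamma_2$ as the common exponent gives \eqref{diff-I0-2d} and \eqref{diff-I0-3d}; I would present the $d=2$ case in full and indicate that $d=3$ is entirely analogous using the spherical-mean formulas and the $\Delta u_0$-identity already cited in the text.
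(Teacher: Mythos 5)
Your plan is correct and follows essentially the same route as the paper: the same splitting into pure time and pure space increments of the $v_0$- and $u_0$-contributions, the same rescaling of $G(t)$ to the unit ball (resp. unit sphere), and the same use of the H\"older regularity of $v_0$, $\nabla u_0$ (resp. $\Delta u_0$) together with sup-norms on $B(0;R+T)$ — the only difference being that where you carry out the $u_0$-estimates explicitly from the representation formulas, the paper simply quotes the computations of \cite{MiSS} for $d=2$ and of \cite[Lemma 4.9]{DSS-Memoirs} and \cite{HHN} for $d=3$. Two harmless remarks: in your $d=2$ formula for $\frac{d}{dt}(G(t)\ast u_0)$ the gradient term should carry an extra factor $t$ (which only helps, since $t\le T$), and your derivation correctly produces an additional term $\|v_0\|_{\infty,R+T}\,|t-\bar t|$ in the $d=3$ time increment coming from the prefactor $t$ in $G(t)\ast v_0=\frac{t}{4\pi}\int_{|w|=1}v_0(\cdot-tw)\,\sigma_1(dw)$; this term is absent from the displayed bound \eqref{diff-I0-3d} (take $v_0$ a nonzero constant to see it cannot be dropped), but keeping it, as in the $d=2$ bound \eqref{diff-I0-2d}, changes nothing in the later use of the proposition.
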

\begin{proof}

(1). Let $0\le t\le \bar t\le T$ and $x\in B(0;R)$ be fixed. The scaling property $G(t,dx)=t\, G(1,dx)$ for $t>0$ implies 
\begin{align*}
& \left\vert [G(t) - G(\bar{t})]\ast v_0(x)\right\vert 
 = \Big\vert\int_{\RR^2} G(t, dy)\Big[ v_0(x-y) -\frac{\bar t}{t}  v_0\Big(x-\frac{\bar t}{t} y\Big)\Big]\Big\vert\\
&\quad \le \frac{\bar t}{t}\int_{\RR^2} G(t, dy)\ \Big\vert v_0(x-y) - v_0\Big(x-\frac{\bar t}{t} y\Big)\Big\vert 
+\Big\vert 1-\frac{\bar t}{t}\Big\vert\int_{\RR^2} G(t, dy)\ \vert v_0(x-y)\vert \\
&\quad \le T\Vert v_0\Vert_{\gamma_2} |t-\bar t|^{\gamma_2} + \Vert v_0\Vert_{\infty,R+T}  |t-\bar t|.
 \end{align*}
Consequently,
 $ {\displaystyle{\sup_{|x|\le R}\big\vert [G(t) - G(\bar{t})]\ast v_0(x)\big\vert \le C(T) \left(\Vert v_0\Vert_{\infty,R+T} + 
 \Vert v_0\Vert_{\gamma_2}\right)|| t-\bar t\vert^{\gamma_2}}}$. 

 According to the computations in \cite[p. 812-813]{MiSS}, we have 
 \beqn
 \sup_{|x|\le R} \Big| \frac{\partial}{\partial t}\big[  G(t)*u_0(x) - G(\bar{t})*u_0(x)\big]\Big| \leq C (\|\nabla u_0\|_{\infty,R+T} |t-\bar{t}| + \| \nabla u_0\|_{\gamma_1} \big)
 |t-\bar{t}|^{\gamma_1}.
 \eeqn 
 Thus,
 \beq
 \label{s3-3.01}
 \sup_{|x|\le R}|I_0(t,x) - I_0(\bar t,x)| \le C(T) \big( \|v_0\|_{\infty,R+T} + \Vert v_0\Vert_{\gamma_2} + \|\nabla u_0\|_{\infty,R+T} 
 + \| \nabla u_0\|_{\gamma_1}\big) 
 |t-\bar t|^{\gamma_1\wedge \gamma_2}.
 \eeq 
 Let now $0\le t\le T$ and $x, \bar x\in B(0, R)$ be fixed; then 
 \begin{align}
 		\label{s3-3.03}
 \big\vert (G&(t\ast v_0)(x) -  (G(t\ast v_0)(\bar x)\big\vert \le \int_{\RR^2} G(t,y)|v_0(x-y)-v_0(\bar x-y)|\ dy\notag\\
 & \le \Vert v_0\Vert_{\gamma_2}|x-\bar x|^{\gamma_2}\ \Big(\int_{\RR^2} G(t,y)\  dy\Big)
 \le T\Vert v_0\Vert_{\gamma_2}|x-\bar x|^{\gamma_2}.
 \end{align} 
 
 According to the computations in \cite[p. 815-816]{MiSS}, we have
\beqn
  \Big| \frac{\partial}{\partial t}\big[  G(t)*u_0(x) - G(t)*u_0(\bar{x})\big] \Big| \leq C \big(\|\nabla u_0\|_{\infty,T+R} |x-\bar{x}| + \| \nabla u_0\|_{\gamma_1}
 |x-\bar{x}|^{\gamma_1}\big) 
 \eeqn
 for $t\in [0,T]$. Therefore, 
 \beq
 \label{s3-3.02}
 \sup_{0\le t\le T}|I_0(t,x) - I_0(t,\bar x)| \le C(T,R) \big(\Vert v_0\Vert_{\gamma_2} + \|\nabla u_0\|_{\infty,T+R}+ \| \nabla u_0\|_{\gamma_1}\big) 
 |x-\bar{x}|^{\gamma_1\wedge \gamma_2}.
 \eeq
From the estimates \eqref{s3-3.01}--\eqref{s3-3.02}, we deduce \eqref{diff-I0-2d}.
\medskip


\noindent (2). Fix $0\le t\le \bar t\le T$ and $x\in B(0, R)$. 
According to \cite[Lemma 4.9, p. 43]{DSS-Memoirs}, we have
\begin{align*}
\sup_{|x|\le R} \Big\Vert \frac{\partial}{\partial t} \left(G(\cdot) \ast u_0\right)(x)\Big\Vert_{\gamma_1} &\le C \left(\Vert \nabla u_0\Vert_{\infty,R+T} + \Vert\Delta u_0\Vert_{\gamma_1}\right),\notag\\
\sup_{|x|\le R} \Vert \left(G(\cdot) \ast v_0\right)(x)\Vert_{\gamma_2} & \le C \Vert v_0\Vert_{\gamma_2},
\end{align*}
where $C>0$ is a universal constant. Consequently,
\beq
\label{s3-3.05}
\sup_{|x|\le R}|I_0(t,x) - I_0(\bar t,x)|  \le C(T,R) \big(\Vert \nabla u_0\Vert_{\infty,R+T} + \Vert\Delta u_0\Vert_{\gamma_1}+ 
 \Vert v_0\Vert_{\gamma_2}\big) |t-\bar t|^{\gamma_1\wedge \gamma_2}.
\eeq
 Fix  $0\le t\le T$ and $x, \bar x\in B(0, R)$. 
 Using the arguments in \cite[p. 362]{HHN} (see also \cite[Chapter 4]{DSS-Memoirs}), and
  the validity of the computations in \eqref{s3-3.03}  in dimension 3, we deduce 
 \beqn
  \sup_{0\le t\le T} \Big| \frac{\partial}{\partial t}\big[  G(t)*u_0(x) - G(t)*u_0(\bar{x})\big] \Big| 
  \leq C\left(\Vert\nabla u_0\Vert_{\infty,R+T} + \Vert\Delta u_0\Vert_{\gamma_1}\right) |x-\bar x|^{\gamma_1}.
   \eeqn
   Hence,
   \beq
  \label{s3-3.07}
   \sup_{0\le t\le T}|I_0(t,x) - I_0(t,\bar x)|\le C(T,R) \big(\Vert \nabla u_0\Vert_{\infty,R+T} + \Vert\Delta u_0\Vert_{\gamma_1}
   +  \Vert v_0\Vert_{\gamma_2}\big) |x-\bar x|^{\gamma_1\wedge \gamma_2}.
\eeq
The proof of \eqref{diff-I0-3d} is a consequence of \eqref{s3-3.05} and \eqref{s3-3.07}. 
\end{proof}

\begin{remark}
\label{s3-r1}
In comparison with the assumptions (1) and (2) of Proposition \ref{p-s3.1},
 in Proposition \ref{s3-3-p1} we restrict the space variable to a bounded set and 
therefore, the boundedness hypotheses are satisfied.
\end{remark}
\noindent{\em Increments of $I_1(t,x)$ in time and space} 
\begin{prop}
\label{s3-3-p2}

Let $I_1(t,x), (t,x)\in [0,T]\times \rde$ be as in \eqref{decom}. 
\smallskip

\noindent 1. Assume that the hypotheses {\bf (he)} are satisfied. Then there exists a positive constant $C(T)$ depending on $T$
 such that for any $(t,x), (\bar t,\bar x)\in [0,T]\times \rde$ and for any $p\in[2,\infty)$,
\begin{align}
\label{diff-I1-d}
&\Vert I_1(t,x) - I_1(\bar t, \bar x)\Vert_p \leq   C(T) \Big\{ |t-\bar t|  \Big[ c(b) + L(b) \sup_{(t,x)\times \rde}\Vert u(t,x)\Vert_p\Big]\\
& \; +   L(b) \! \int_0^t \! ds  \Big( \sup_{|z_1-z_2|=|x-\bar x|} \Vert u(s,z_1)-u(s,z_2)\Vert_p
+ \!\! \!  \sup_{|z_1-z_2| \le |t-\bar t|}\!  \Vert u(s,z_1)-u(s,z_2)\Vert_p\Big) \Big\}. 
\notag 
\end{align}

\noindent 2.  Assume the hypotheses of Proposition \ref{p-s3.1}. Then there exists a positive constant $C(T)$ depending on $T$ such that for
  any $p\in\Big[2,\frac{\sqrt{L(b)}}{2^5 3 C_\mu L(\sigma)^2}\Big]$ and any $(t,x), (\bar t,\bar x)\in [0,T]\times \rde$,
\begin{align}
\label{diff-I1-dbis}
&\left\Vert I_1(t,x) - I_1(\bar t, \bar x)\right\Vert_p 
\leq C(T) \Big\{ |t-\bar t|\left[c(b) + L(b) e^{2T\sqrt{L(b)}} \mathcal{N}_{2\sqrt{L(b)},p}(u)\right]  \\
& \; + L(b)\int_0^t \!\! ds\,  \Big( \sup_{|z_1-z_2|=|x-\bar x|} \Vert u(s,z_1)-u(s,z_2)\Vert_p
+ \!\!  \sup_{|z_1-z_2| \le |t-\bar t|} \Vert u(s,z_1)-u(s,z_2)\Vert_p\Big) \Big\},  \notag 
\end{align}
with $\mathcal{N}_{2\sqrt{L(b)},p}(u)$ satisfying \eqref{p-s3.1-e1}.
\end{prop}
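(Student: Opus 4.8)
The plan is to estimate $\Vert I_1(t,x) - I_1(\bar t,\bar x)\Vert_p$ by splitting the increment into a "time part" and a "space part" and controlling each separately, exactly as in the proof of Proposition \ref{s2-2-p1} but now using the measure-valued fundamental solution $G(t,dy)$ and the rescaling $G(t,dy)=t\,G(1,dy)$. Without loss of generality take $0\le \bar t\le t\le T$. Writing $I_1(t,x)=\int_0^t ds\int_{\rde} G(s,dy)\, b(u(t-s,x-y))$, I would decompose
\begin{align*}
I_1(t,x)-I_1(\bar t,\bar x) &= \int_{\bar t}^t ds\int_{\rde} G(s,dy)\, b(u(t-s,x-y)) \\
&\quad + \int_0^{\bar t} ds\int_{\rde} G(s,dy)\big[ b(u(t-s,x-y)) - b(u(\bar t-s,\bar x-y))\big].
\end{align*}
The first term is bounded, via Minkowski's inequality, $\int_{\rde} G(s,dy)=\tfrac{\omega_d}{?}s^{?}\sim s$ up to a $d$-dependent constant (in fact $\int_{\rde}G(s,dy)=s$ for $d=1,3$ and $=\pi s$ for... — in any case $\le Cs$ on $[0,T]$), and \eqref{c-L}, by $C(T)|t-\bar t|\big[c(b)+L(b)\sup_{(s,x)}\Vert u(s,x)\Vert_p\big]$, which is the first line of \eqref{diff-I1-d}.

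For the second term I would first handle the spatial shift $x\mapsto\bar x$ and the temporal shift $t-s\mapsto \bar t-s$ separately by the triangle inequality inside the $L^p$-norm, inserting the intermediate point $b(u(\bar t-s,x-y))$. The spatial-shift piece gives, after Minkowski and again $\int_{\rde}G(s,dy)\le Cs\le CT$,
\[
L(b)\,C(T)\int_0^{\bar t} ds\;\sup_{|z_1-z_2|=|x-\bar x|}\Vert u(\bar t-s,z_1)-u(\bar t-s,z_2)\Vert_p,
\]
which after the change of variable $s\mapsto \bar t-s$ is dominated by the first supremum term in the bracket of \eqref{diff-I1-d}. For the temporal-shift piece $\int_0^{\bar t}ds\int G(s,dy)\Vert b(u(\bar t-s,x-y))-b(u(\bar t-s',\dots))\Vert$ — more precisely the difference of $u$ at times $t-s$ and $\bar t-s$ — I would use $|(t-s)-(\bar t-s)|=|t-\bar t|$, so this is controlled by $L(b)C(T)\int_0^T ds\,\sup_{|z_1-z_2|\le|t-\bar t|}\Vert u(s,z_1)-u(s,z_2)\Vert_p$ once one notes that increments of $u$ in the time variable of size $|t-\bar t|$ are what appear; combined, these give the two supremum terms in \eqref{diff-I1-d}. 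This proves statement 1.

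Statement 2 is then immediate: under the hypotheses of Proposition \ref{p-s3.1}, \eqref{p-s3.1-sup} (equivalently \eqref{p-s3.1-e1}) gives $\sup_{(s,x)\in[0,T]\times\rde}\Vert u(s,x)\Vert_p\le e^{2T\sqrt{L(b)}}\,\mathcal{N}_{2\sqrt{L(b)},p}(u)$ for every $p$ in the stated range, so substituting this bound for $\sup_{(t,x)}\Vert u(t,x)\Vert_p$ in the first line of \eqref{diff-I1-d} yields \eqref{diff-I1-dbis}, while the supremum-of-increment terms are left untouched (they will be estimated later, in the induction that produces the analogue of Proposition \ref{s2-2-p1}). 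The main obstacle I anticipate is purely bookkeeping: one must be careful that after the rescaling $G(s,dy)=s\,G(1,dy)$ and the substitution $x-\tfrac{\bar t}{t}y$ used to compare $G(t)\ast\varphi$ with $G(\bar t)\ast\varphi$, the spatial increment induced on $u$ is genuinely of order $|x-\bar x|$ (and not of order $|x-\bar x|+|t-\bar t|$ mixed together in an uncontrolled way), and that the domain of integration in $y$ stays uniformly bounded so that $\int G(s,dy)$ contributes only a factor $C(T)$; splitting cleanly so that \emph{increments of $u$} rather than $u$ itself carry the small factors $|x-\bar x|$ and $|t-\bar t|$ is the delicate point, since $G$ is only a measure and one cannot differentiate it in space.
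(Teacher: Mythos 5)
Your treatment of the spatial increment (insert the intermediate point, Minkowski, $\int_{\rde}G(s,dy)=s\le T$) and of the piece $\int_{\bar t}^{t}ds$ is fine, and your reduction of statement 2 to statement 1 via $\sup_{(t,x)}\Vert u(t,x)\Vert_p\le e^{2T\sqrt{L(b)}}\mathcal{N}_{2\sqrt{L(b)},p}(u)$ is exactly what the paper does. The gap is in your handling of the temporal shift. Having written $I_1(t,x)=\int_0^t ds\int_{\rde}G(s,dy)\,b(u(t-s,x-y))$ and inserted $b(u(\bar t-s,x-y))$, the remaining piece involves $\Vert u(t-s,z)-u(\bar t-s,z)\Vert_p$, i.e.\ \emph{time} increments of $u$ at a fixed spatial point. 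You then claim these are ``controlled by'' $\sup_{|z_1-z_2|\le|t-\bar t|}\Vert u(s,z_1)-u(s,z_2)\Vert_p$, but that supremum is over \emph{spatial} increments at a fixed time; no inequality converting time increments of $u$ into space increments of the same size is available at this stage (establishing control of time increments of $u$ is precisely the goal of Theorem \ref{s3-3-p6}, which comes after this proposition), so this identification is unjustified and the proof as written does not yield \eqref{diff-I1-d}. The issue is not cosmetic: the right-hand side of \eqref{diff-I1-d} deliberately contains only spatial increments of $u$, because these are what the Gronwall argument of Proposition \ref{s3-3-p4} (leading to \eqref{s3-3.38}) can close; a bound involving time increments of $u$ would make the later argument circular.

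The paper avoids this by keeping the time argument of $u$ fixed and moving the increment onto the kernel: write $I_1(t,x)=\int_0^t ds\int_{\rde}G(t-s,dy)\,b(u(s,x-y))$, split off $\int_t^{\bar t}$, and for the common range compare $G(t-s,dy)$ with $G(\bar t-s,dy)$ through the scaling $G(r,dy)=r\,G(1,dz)$. Then the integrand difference is $(t-s)b(u(s,x-(t-s)z))-(\bar t-s)b(u(s,x-(\bar t-s)z))$; the difference of the prefactors produces the term $|t-\bar t|\,[c(b)+L(b)\sup\Vert u\Vert_p]$, while, since $\mathrm{supp}\,G(1,\cdot)\subset\overline{B(0;1)}$, the points $x-(t-s)z$ and $x-(\bar t-s)z$ differ by at most $|t-\bar t|$, so the remaining term is bounded by $L(b)\,T\int_0^t ds\,\sup_{|z_1-z_2|\le|t-\bar t|}\Vert u(s,z_1)-u(s,z_2)\Vert_p$ — a genuine spatial increment of $u$ of size $\le|t-\bar t|$. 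This ``transfer of increments'' step (which you gesture at in your final paragraph but do not actually use in your decomposition) is the missing idea; with it, your spatial-shift estimate and the $\int_{\bar t}^t$ estimate combine to give \eqref{diff-I1-d}, and part 2 follows as you indicate.
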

\begin{proof} {\it 1.} \; 
Fix $t\in[0,T]$ and $x, \bar x\in \rde$. 
The Minkoswki inequality, the Lipschitz continuity of $b$ and the property $\int_{\RR^d} G(t,dx)=t$  yield 
\begin{align}
\label{s3-3.15}
\big\Vert I_1(t,x) - I_1(t,\bar x)\big\Vert_p 
& \le L(b )\int_0^t ds \ (t-s) \Big[\sup_{|z_1-z_2|=|x-\bar x|} \left\Vert u(s,z_1)-u(s,z_2)\right\Vert_p\Big]\notag\\
&\le L(b ) \, T \int_0^t ds \ \sup_{|z_1-z_2|=|x-\bar x|} \left\Vert u(s,z_1)-u(s,z_2)\right\Vert_p.
\end{align}

 By the triangle inequality 
$\left\Vert I_1(\bar t,x) - I_1(t,x)\right\Vert_p\le T_1(p;t,\bar t,x) + T_2(p;t,\bar t,x)$, for any $0\le t\le \bar t\le T$, where
\begin{align*}
T_1(p;t,\bar t,x) & = \Big\Vert \int_0^t ds \int_{\rde} \big[G(\bar t-s, dy) - G(t-s, dy)\big] b(u(s,x-y))\Big\Vert_p,\\
T_2(p;t,\bar t,x) & = \Big\Vert \int_t^{\bar t} ds \int_{\rde} G(\bar t-s, dy ) b(u(s,x-y))\Big\Vert_p.
\end{align*}
By the scaling property 
of the fundamental solution $G(t)$, $T_1(p;t,\bar t,x)$ equals
\beqn
\Big\Vert \int_0^t  ds \int_{\rde}  G(1,dz) \left[(t-s)b(u(s,x-(t-s)z)) - (\bar t-s)b(u(s,x-(\bar t-s)z))\right]\Big\Vert_p.
\eeqn
Apply Minkowski's inequality and use the Lipschitz property of $b$ and \eqref{c-L}; this yields 
\begin{align}
\label{s3-3.16}
&T_1(p;t,\bar t,x) 
  \le |t-\bar t| \int_0^t  ds \int_{\rde}  G(1,dz) \big[c(b) + L(b) \Vert u(s,x-(\bar t-s)z))\Vert_p \big] \notag \\
&\quad + L(b) \int_0^t ds \int_{\rde}  G(1,dz) (t-s) \left\Vert u(s,x-(t-s)z)) - u(s,x-(\bar t-s)z))\right\Vert_p. 
\end{align}

Since the support of $G(1,dz)$ is included in the closed ball $\overline{B(0;1)}$, we have
\begin{align*}
 &\int_0^t ds\ (t-s)\int_{\rde}  G(1,dz)  \left\Vert u(s,x-(t-s)z)) - u(s,x-(\bar t-s)z))\right\Vert_p\\
&\qquad  \le T  \int_0^t ds \int_{\rde}  G(1,dz) \sup_{|z_1-z_2| \le |t-\bar t|}  \left\Vert u(s,z_1) - u(s,z_2)\right\Vert_p\\
&\qquad = T  \int_0^t  ds \, \sup_{|z_1-z_2| \le |t-\bar t|}  \left\Vert u(s,z_1) - u(s,z_2)\right\Vert_p.
 \end{align*}
 The first term on the right-hand side of  \eqref{s3-3.16} is bounded from above by
 \begin{equation} 	\label{4.39.1}
 |t-\bar t| \Big\{T c(b) +L(b) \int_0^t ds \sup_{x\in\rde}\left\Vert u(s,x)\right\Vert_p\Big\}.
 \end{equation}
 
Thus,
\begin{align}
\label{s3-3.17}
T_1(p;t,\bar t,x) \le
& \;  T \Big(L(b) \int_0^t \sup_{|z_1-z_2|\le |t-\bar t|}  \left\Vert u(s,z_1) - u(s,z_2)\right\Vert_p ds  \notag\\
& + |t-\bar t| \Big\{c(b)  +L(b) \sup_{(t,x)\in[0,T]\times \rde}\left\Vert u(t,x)\right\Vert_p\Big\}\Big).
 \end{align}

 With similar arguments, we deduce the following upper bounds for $T_2(p;t,\bar t,x)$:
 
 \begin{align}
 \label{s3-3.18}
 T_2(p;t,\bar t,x)&\le 
 c(b)  \int_t^{\bar t} ds \int_{\rde} G(\bar t-s,dy) + L(b)\int_t^{\bar t} ds (\bar t-s) \sup_{x\in\rde}\left\Vert u(s,x)\right\Vert_p\notag\\
&\le c(b)\frac{(\bar t - t)^2}{2} + L(b )\frac{(\bar t - t)^2}{2} \sup_{(t,x)\in[0,T]\times \rde}\left\Vert u(t,x)\right\Vert_p.
\end{align}
From \eqref{s3-3.17} and \eqref{s3-3.18}, we obtain \eqref{diff-I1-d} for $x=\bar x$. Along with 
\eqref{s3-3.15}, we obtain \eqref{diff-I1-d}.
\bigskip

\noindent {\it 2.}\ 
This claim follows from the definition of \eqref{norm} and Proposition \ref{p-s3.1}.
\end{proof}


\noindent{\em Space increments  of $I_2(t,x)$}
\smallskip

While keeping assumption {\bf (h1)}, we consider a strengthening of {\bf (h0)}, denoted by {\bf (h2)}. This is condition ($c^\prime$) in \cite[p. 367]{HHN} on the spectral measure $\mu$. 
\medskip

\noindent {\bf (h2)} 
There exists $\gamma \in (0,1)$ such that the Fourier transform of the
tempered measure $|\zeta|^{2\gamma} \mu (d\zeta)$ is a non-negative locally integrable function $g_\gamma$, and moreover,
 \begin{equation*} 
 \int_{\RR^d} \frac{\mu(d\zeta)}{1+|\zeta|^{2-2\gamma}} <\infty.
 \end{equation*}
 Set
 \beq
 \label{cmugamma}
  C_\mu^{(\gamma)}:= \frac{1}{(2\pi)^d} \int_{\RR^d} \frac{\mu(d\zeta)}{1+|\zeta|^{2-2\gamma}}.
  \eeq
%
\begin{prop} 
\label{s3-3-p3}

 Let $I_2(t,x)$, $(t,x)\in[0,T]\times \rde$ be as in \eqref{decom}.
 \smallskip
 
 \noindent 1. Assume that the hypotheses {\bf(he)}, {\bf(h1)} and {\bf(h2)} are satisfied.
Then, 
 for  any $p\in [2,\infty)$ and  $t\in [0,T]$, there exists a positive constant $C$  such that, for every $x,\bar{x}\in \RR^d$, $t\in [0,T]$, 
 \begin{align}
  \label{diff-I2-x-d}
\Vert I_2(t,x)  - &I_2(t,\bar x)\Vert_p^2 \le C p\,  (1+T^2) C_\mu L(\sigma)^2 
\Big(\int_0^t ds \sup_{|z_1-z_2|=|x-\bar x|}
 \Vert u(s,z_1)-u(s,z_2)\Vert_p^2\Big)\notag\\
&\; + Cp\, (T+T^3) C^{(\gamma)}_\mu |x-\bar x|^{2\gamma} \Big[c(\sigma)+L(\sigma) \sup_{(s,y)\in[0,T]\times \rde}\Vert u(s,y)\Vert_p\Big]^2,
  \end{align}
  where $C_\mu$, $C_\mu^{(\gamma)}$  are defined in \eqref{cmu}, \eqref{cmugamma}, respectively.
  \smallskip
  
  \noindent{2.} Assume that the hypotheses of Proposition \ref{p-s3.1} hold. Then, for any 
  $p\in\Big[2, \frac{\sqrt{L(b)}}{2^5 3 C_\mu L(\sigma)^2}\Big]$, 
  there exists a positive constant $C$  such that, for every $x,\bar{x}\in \RR^d$, $t\in [0,T]$, 
 \begin{align}
  \label{diff-I2-x-dbis}
\Vert I_2(t,x) - &I_2(t,\bar x)\Vert_p^2 \le C p\, (1+T^2) C_\mu L(\sigma)^2 
\Big(\int_0^t ds \sup_{|z_1-z_2|=|x-\bar x|} 
\Vert u(s,z_1)-u(s,z_2)\Vert_p^2\Big)\notag\\
&\; + Cp\, (T+T^3) C_\mu^{(\gamma)}  |x-\bar x|^{2\gamma} \left[c(\sigma)+L(\sigma) e^{2T\sqrt{L(b)}} \mathcal{N}_{2\sqrt{L(b)},p}(u)\right]^2,
  \end{align}
   where 
   $\mathcal{N}_{2\sqrt{L(b)},p}(u)$ satisfies \eqref{p-s3.1-e1}.
   \end{prop}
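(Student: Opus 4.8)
The plan is to estimate the space increment $I_2(t,x)-I_2(t,\bar x)$ by first applying the sharp Burkholder–Davies–Gundy inequality \eqref{s3.4} to the stochastic integral whose kernel is the \emph{difference} $G(t-s,x-dy)-G(t-s,\bar x-dy)$, and then splitting the resulting space-time integral into two pieces according to the algebraic identity
\[
\varphi(y)\varphi(z)-\varphi(\bar y)\varphi(\bar z)
=\tfrac12\big(\varphi(y)-\varphi(\bar y)\big)\big(\varphi(z)+\varphi(\bar z)\big)
+\tfrac12\big(\varphi(y)+\varphi(\bar y)\big)\big(\varphi(z)-\varphi(\bar z)\big),
\]
applied here to $\varphi=\sigma(u(s,\cdot))$ with the pairs coming from the two copies of $G$. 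Equivalently, one writes the squared-norm expression as a double integral against $G(t-s,x-dy)G(t-s,x-dz)f(y-z)$ plus cross and pure-translated terms; the standard way (following \cite{HHN}, \cite{DSS-Memoirs}) is to bound the whole thing by a sum of a term controlled by $C_\mu$ with the increments of $\sigma(u)$ integrated in, and a term where the translation of $G$ is absorbed into a factor $|x-\bar x|^{2\gamma}$ via hypothesis \textbf{(h2)}.

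First I would use Lemma \ref{l-a-1} to pass, under \textbf{(h1)}, from the double space integral against $f$ to the spectral-side integral against $\mu(d\xi)$ with the extra oscillatory factor $e^{-iz\cdot\xi}$ coming from the translation $x-\bar x$. The ``diagonal'' part (no net translation) is handled exactly as in the proof of Proposition \ref{p-s3.1}, using $|\cF G(t)(\zeta)|^2\le 2(1+t^2)/(1+|\zeta|^2)$ and the definition of $C_\mu$ in \eqref{cmu}; this produces the first term on the right-hand side of \eqref{diff-I2-x-d}, namely $Cp(1+T^2)C_\mu L(\sigma)^2\int_0^t\!ds\,\sup_{|z_1-z_2|=|x-\bar x|}\|u(s,z_1)-u(s,z_2)\|_p^2$, after invoking the Lipschitz bound \eqref{c-L} and the elementary inequality $2ab\le a^2+b^2$ as in \eqref{prendos}. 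For the ``translated'' part I would use $|1-e^{-i(x-\bar x)\cdot\xi}|\le 2\,(|x-\bar x||\xi|)\wedge 1\le 2|x-\bar x|^{\gamma}|\xi|^{\gamma}$, so that the factor $|\xi|^{2\gamma}$ appears inside the spectral integral; combining this with $|\cF G(t-s)(\zeta)|^2\le 2(1+(t-s)^2)/(1+|\zeta|^2)$ yields an integrand controlled by $|\xi|^{2\gamma}\mu(d\xi)/(1+|\xi|^2)$, which is finite and equals a constant multiple of $C_\mu^{(\gamma)}$ by \eqref{cmugamma} and \textbf{(h2)}. Bounding $\sigma(u(s,y))$ crudely by $c(\sigma)+L(\sigma)\sup_{(s,y)}\|u(s,y)\|_p$ and integrating $1+(t-s)^2$ over $s\in[0,t]$ gives the factor $T+T^3$ and the second term of \eqref{diff-I2-x-d}.

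The main obstacle will be the careful bookkeeping in the ``translated'' part: one must keep track of exactly where the factor $e^{-i(x-\bar x)\cdot\xi}$ sits after expanding $\sigma(u(s,y))\sigma(u(s,z))$ via the splitting identity above, ensuring that the increment factor $|x-\bar x|^{2\gamma}$ is correctly extracted from the cross term and does not contaminate the $C_\mu$ term, and that taking the $\|\cdot\|_{p/2}$ norm inside (via Minkowski) before applying Cauchy–Schwarz to $\|\sigma(u(s,y))\|_p\|\sigma(u(s,z))\|_p$ is legitimate — this requires $f\ge 0$ so that $G(t-s,x-dy)G(t-s,x-dz)f(y-z)$ is a genuine (positive) measure, which is guaranteed by \textbf{(h1)}. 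Once part 1 is established, part 2 is immediate: under the hypotheses of Proposition \ref{p-s3.1} one simply replaces $\sup_{(s,y)}\|u(s,y)\|_p$ by $e^{2T\sqrt{L(b)}}\,\mathcal{N}_{2\sqrt{L(b)},p}(u)$ using the definition \eqref{norm} of the seminorm, and the admissible range of $p$ shrinks to $[2,\sqrt{L(b)}/(2^5\,3\,C_\mu L(\sigma)^2)]$ accordingly, with $\mathcal{N}_{2\sqrt{L(b)},p}(u)$ further bounded by \eqref{p-s3.1-e1}.
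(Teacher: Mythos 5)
Your overall skeleton is the same as the paper's (which follows \cite{HHN} and \cite{DSS-Memoirs}): apply the Burkholder--Davies--Gundy inequality, transfer the spatial increment so that the kernels remain the positive measures $G(t-s,dy)G(t-s,dz)$, pass to the spectral side via Lemma \ref{l-a-1}, use $|1-e^{-i(x-\bar x)\cdot\zeta}|\le C|x-\bar x|^\gamma|\zeta|^\gamma$, and collect $C_\mu$ and $C_\mu^{(\gamma)}$; part 2 is indeed immediate from \eqref{norm} and Proposition \ref{p-s3.1}. However, there is a genuine gap in your treatment of the ``translated'' part. After Lemma \ref{l-a-1} the random field sits \emph{inside} the Fourier transform, i.e.\ the spectral integrand involves $\cF\big(\sigma(u(s,\cdot))G(t-s,\cdot)\big)(\zeta)$, and this cannot be bounded pointwise by $\sup_y|\sigma(u(s,y))|\,|\cF G(t-s)(\zeta)|$ (the transform of a product is a convolution, not a product). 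So the step ``combine $|1-e^{-i(x-\bar x)\cdot\zeta}|\le 2|x-\bar x|^\gamma|\zeta|^\gamma$ with $|\cF G(t-s)(\zeta)|^2\le 2(1+(t-s)^2)/(1+|\zeta|^2)$ and bound $\sigma(u(s,y))$ crudely'' is not legitimate as written. The correct mechanism (the paper's) is: truncate $\sigma(u)$ so that Lemma \ref{l-a-1} applies to bounded functions; extract $|x-\bar x|^{2\gamma}|\zeta|^{2\gamma}$; use Plancherel to rewrite $\int |\cF(\Sigma G(t-s))(\zeta)|^2\,|\zeta|^{2\gamma}\mu(d\zeta)$ as $\int g_\gamma(y)\big[(\Sigma G(t-s))*\widetilde{(\Sigma G(t-s))}\big](y)\,dy$; apply Minkowski's inequality for the $L^{p/2}(\Omega)$ norm with respect to the measure $g_\gamma(y)\,[G(t-s)*\widetilde{G(t-s)}](y)\,dy\,ds$ -- which is a nonnegative measure precisely because {\bf(h2)} asserts $g_\gamma\ge 0$ and locally integrable; pull out $\sup_{(s,y)}\|\sigma(u(s,y))\|_p^2$; and only then return to the spectral side to identify $\int_0^t ds\int|\zeta|^{2\gamma}|\cF G(t-s)(\zeta)|^2\mu(d\zeta)\le 2(T+T^3)C_\mu^{(\gamma)}$. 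Your plan invokes positivity only of $f$ (which indeed suffices for the diagonal term giving the $C_\mu$ contribution), but never uses the positivity of $g_\gamma$, which is the whole point of {\bf(h2)} and the only way to separate the random factor from the spectral integral in the translated terms.

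A second, smaller issue is the decomposition itself. Your two-term polarization identity acts only on the product $\sigma(u(s,y))\sigma(u(s,z))$, but once the translation is transferred off the kernels, the four expanded terms carry \emph{different} arguments of $f$ (namely $f(y-z)$, $f(y-z\pm\xi)$ with $\xi=x-\bar x$), so the regrouping cannot be done on the $\sigma(u)$ factors alone. The paper's four-term splitting $h_1,\dots,h_4$ puts the increment either on $\sigma(u)$ (term $h_1$, giving the $C_\mu$ part), on $f$ once (cross terms $h_2,h_3$), or as a second difference of $f$ ($h_4$); moreover the cross terms must be further split with $2\sqrt{ab}\le a+b$ so that they contribute to \emph{both} terms of \eqref{diff-I2-x-d}. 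Your sketch correctly flags the bookkeeping danger, but without the truncation, the Plancherel--Minkowski--Plancherel maneuver against $g_\gamma$, and the cross-term split, the argument does not close.
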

\begin{proof} 
To simplify the presentation, we will use the notation of  \cite[Theorem 3.1]{HHN} that we recall below.  
For $s\in[0,T]$ and $x, \bar x, y, z\in \rde$, set $\xi=x-\bar x$ and
\begin{align*}
& \Sigma_x(s,y) = \sigma(u(s,x-y)),\quad \Sigma_{x,\bar{x}}(s,y) = \sigma(u(s,x-y))-\sigma(u(s,\bar{x}-y)),\\
& h_1(s,y,z) = f(y-z) \Sigma_{x,\bar{x}}(s,y) \Sigma_{x,\bar{x}}(s,z), \\
& h_2(s,y,z) = \big[f(y-z+\xi)-f(y-z)\big] \Sigma_x(s,z) \Sigma_{x,\bar{x}}(s,y), \quad  h_3(s,y,z) = h_2(s,z,y), \\
& h_4(s,y,z) = \big[2f(y-z)-f(y-z+\xi)-f(y-z-\xi)\big] \Sigma_x(s,y) \Sigma_x(s,z).
\end{align*}
Fix $p\in [2,\infty)$ and apply the Burkholder-Davies-Gundy inequality to obtain
\begin{equation}			\label{I2x-I2x}
\left\Vert I_2(t,x) - I_2(t,\bar x)\right\Vert_p^2 \le 4p \sum_{i=1}^4 \Vert Q_i(t;x,\bar x)\Vert_{\frac{p}{2}},
\end{equation}
 where, using  the transfer of  increments strategy introduced in  
\cite[p. 19]{DSS-Memoirs} (see also in \cite[p. 374]{HHN}),  we set 
\beqn
Q_i(t;x,\bar x)=\int_0^t ds \int_{\RR^d} \int_{\RR^d} G(t-s, dy) G(t-s, dz) h_i(s,y,z), \quad  i=1, \ldots, 4.
\eeqn

In \cite[Theorem 3.2]{HHN}, upper bounds of the terms $\Vert Q_i(t;x,\bar x)\Vert_{\frac{p}{2}}$ 
are established. We sketch here their proofs, paying attention to the value of the relevant constants and checking 
that the arguments also hold for $d=2$. 
\smallskip

\noindent {\em Upper bound of $\Vert Q_1(t;x,\bar x)\Vert_{\frac{p}{2}}$}.
Using Minkowski's inequality,  then the Cauchy-Schwarz inequality, the Lipschitz property of $\sigma$ and \eqref{cmu} we obtain 
\begin{align}
\label{Q1}
\Vert Q_1(t;x,\bar x)\Vert_{\frac{p}{2}} 
 \le &L(\sigma)^2 \int_0^t ds \int_{\RR^d} \int_{\RR^d} G(t-s, dy) G(t-s, dz)
f(y-z)\notag\\
&\qquad \qquad\qquad \qquad \times\Big[ \sup_{|z_1-z_2|=|x-\bar x|} \Vert u(s,z_1)-u(s,z_2)\Vert_p^2\Big]\notag\\
& \le 2 L(\sigma)^2 (1+T^2) C_\mu \int_0^t ds \sup_{|z_1-z_2|=|x-\bar x|} \Vert u(s,z_1)-u(s,z_2)\Vert_p^2.
\end{align}

For the study of the remaining terms $\Vert Q_i(t;x,\bar x)\Vert_{\frac{p}{2}}$, $i=2,3,4$, in order to be in the setting of   Lemma \ref{l-a-1},
we use a truncation argument on the processes $\Sigma_x(s,y)$, $\Sigma_{x,\bar{x}}(s,y)$. 
For 
$k\geq 1$, set $\Sigma^k_x(s,y)=\Sigma_x(s,y) 1_{\{|\Sigma_x(s,y)|\leq k\}}$, 
$\Sigma^k_{x,\bar{x}}(s,y)=\Sigma_{x,\bar{x}}(s,y) 1_{\{|\Sigma_{x,\bar{x}} (s,y)|\leq k\}}$, and 
\beqn
Q_i^k(t;x,\bar x)=\int_0^t ds \int_{\RR^d} \int_{\RR^d} G(t-s, dy) G(t-s, dz) h_i^k(s,y,z), \ i=2,3,4,
\eeqn
where each $h_i^k(s,y,z)$ is defined as $h_i(s,y,z)$ by replacing $\Sigma_x(s,y)$ and $\Sigma_{x,\bar{x}}(s,y)$  by
$\Sigma^k_x(s,y)$ and $\Sigma^k_{x,\bar{x}}(s,y)$, respectively.

\noindent {\em Upper bound for $\Vert Q_2^k(t;x,\bar x)\Vert_{\frac{p}{2}}$}.
Apply Lemma \ref{l-a-1} to the bounded functions $\varphi(z)= \Sigma_x^k(s,z)$ and $\psi(y)=\Sigma_{x,\bar{x}}^k(s,y)$. Then, up to the constant 
$(2\pi)^{-d}$, $Q_2^k(t;x,\bar x)$ is equal to 
\beqn
\int_0^t ds \int_{\RR^d} \overline{\cF \big( \Sigma^k_x(s,.) G(t-s,.)\big)(\zeta)}\ 
 \cF\big(\Sigma^k_{x,\bar{x}}(s,.) G(t-s,.)\big)(\zeta) \left[e^{-i\xi.\zeta}-1\right] \mu(d\zeta),
\eeqn
where $\xi = x-\bar{x}$. Since for any 
$\gamma \in (0,1]$, $|e^{-i \xi.\zeta}-1| \leq C |\xi|^\gamma |\zeta|^\gamma$, and $2\sqrt{ab} \le (a+b)$ for 
 $a, b\ge 0$, computations similar to those in \cite[p. 368]{HHN} imply
\beq
\label{twoqus}
\Vert Q_2^k(t;x,\bar x)\Vert_{\frac{p}{2}}\le C\left(\Vert Q_2^{k,1}(t;x,\bar x)\Vert_{\frac{p}{2}}+ \Vert Q_2^{k,2}(t;x,\bar x)\Vert_{\frac{p}{2}}\right),
\eeq
where
\begin{align*}
 Q_2^{k,1}(t;x,\bar x) &: = |\xi|^{2\gamma} \int_0^t ds \int_{\rde} 
\left\vert\cF \big( \Sigma^k_x(s,.)  G(t-s,.)\big)(\zeta)\right\vert^2 |\zeta|^{2\gamma} \mu(d\zeta),\\
 Q_2^{k,2}(t;x,\bar x) &: = \int_0^t ds \int_{\rde} 
\left\vert \cF\big(\Sigma^k_{x,\bar{x}}(s,.) G(t-s,.)\big)(\zeta)\right\vert^2 \mu(d\zeta).
\end{align*}
Set 
$J^{(\gamma)}(t) = \frac{1}{(2\pi)^d} \int_{\rde} \mu(d\zeta) |\zeta|^{2\gamma}|\cF G(t)(\zeta)|^2$.
 A minor change in the proof of \eqref{cmu} yields 
 \beq
\label{s3-3.30}
J^{(\gamma)}(t)\le 2(1+t^2) C_\mu^{(\gamma)}<\infty,
\eeq
where $C_\mu^{(\gamma)}$ is defined in \eqref{cmugamma}, and we have used 
the assumption {\bf (h2)}.

 
Using the Plancherel identity, the Minkowski inequality with respect to the non-negative measure 
$[G(t-s,.)*G(t-s,.)](y) g_\gamma(y) \,dy\, ds$ and once more the Plancherel identity, since $\tilde{G}(s,.)$ is symmetric, we deduce
(as in \cite[p. 369]{HHN}), 
 \begin{align*}
 \Vert Q_2^{k,1} (t;x,\bar x) \Vert_{\frac{p}{2}} 
& \le C |x-\bar{x}|^{2\gamma} \! \sup_{(s,y)\in [0,T]\times \RR^d} \| \Sigma_x^k(s,y)\|_p^2 
 \int_0^t ds \int_{\RR^d} \mu(d\zeta)   |\zeta|^{2\gamma} |\cF G(t-s)(\zeta)|^2 \\
&   \le C |x-\bar{x}|^{2\gamma} T\,  (1+T^2) \, C_\mu^{(\gamma)} \sup_{(s,y)\in [0,T]\times \RR^d} \| \Sigma_x^k(s,y)\|_p^2 .
\end{align*}
From \eqref{c-L} we have
\[\sup_{(s,y)\in[0,T]\times\rde} \Vert\Sigma_x(s,y)\Vert_p  \le c(\sigma) + L(\sigma) \sup_{(s,y)\in[0,T]\times\rde} \Vert u(s,y)\Vert_p.\]
 Therefore, 
\beq
\label{s3-3.31}
\Vert Q_2^{k,1}(t;x,\bar x)\Vert_{\frac{p}{2}} \le C |x-\bar x|^{2\gamma}(T+T^3) C_\mu^{(\gamma)} 
\Big[c(\sigma) + L(\sigma) \sup_{(s,y)\in[0,T]\times\rde} \Vert u(s,y)\Vert_p\Big]^2.
\eeq
With similar arguments, we obtain 
\begin{align} \label{s3-3.32}
&\| Q_2^{k,2}(t;x,\bar{x})\|_{\frac{p}{2}} 
 \leq C \int_0^t ds \sup_{y\in \RR^d} \|\Sigma^k_{x,\bar{x}}(s,y)\|_p^2  \int_{\rde} 
\left\vert\cF( G(t-s,.))(\zeta)\right\vert^2 \mu(d\zeta) \nonumber \\
&\quad \leq C \int_0^t  \sup_{y\in \RR^d} \|\Sigma^k_{x,\bar{x}}(s,y)\|_p^2  J(t-s) ds
\le  C (1+T^2)\, C_\mu \int_0^t \sup_{y\in \RR^d} \|\Sigma^k_{x,\bar{x}}(s,y)\|_p^2 ds \nonumber \\
&\quad \le C (1+T^2) C_\mu L(\sigma)^2\int_0^t ds \sup_{|z_1-z_2|=|x-\bar x|}\Vert u(s,z_1)-u(s,z_2)\Vert_p^2,
\end{align}
where in the last inequality, we have used \eqref{cmu}, the Lipschitz property of $\sigma$ and  the upper estimate 
$\Vert \Sigma^k_{x,\bar x}(s,y)\Vert_p \le \Vert \Sigma_{x,\bar x}(s,y)\Vert_p$.

Summarising, \eqref{twoqus}, along with \eqref{s3-3.31} and \eqref{s3-3.32} imply 
\begin{align}
\label{s3-3.33}
\Vert Q_2^k&(t; x,\bar x)\Vert_{\frac{p}{2}}  \le  C (T+T^3) C_\mu^{(\gamma)}  |x-\bar x|^{2\gamma}
\Big[c(\sigma) + L(\sigma) \sup_{(s,y)\in[0,T]\times\rde} \Vert u(s,y)\Vert_p\Big]^2\notag\\
&\quad  + C (1+T^2) C_\mu  L(\sigma)^2\int_0^t ds \sup_{|z_1-z_2|=|x-\bar x|}\Vert u(s,z_1)-u(s,z_2)\Vert_p^2,
\end{align}
for  some universal positive  constant $C$. 

Notice that, since $|e^{-i \xi.\zeta}-1|=|e^{i \xi.\zeta}-1|$, swapping $y$ and $z$ we deduce that \eqref{s3-3.33} also holds for
$\Vert Q_3^k(t; x,\bar x)\Vert_{\frac{p}{2}}$.
\medskip

\noindent{\em Upper bound of $\Vert Q_4^k(t; x,\bar x)\Vert_{\frac{p}{2}}$}.
Applying Lemma \ref{l-a-1} with $\varphi=\psi=\Sigma_x^k(s,.)$ and then Plancherel's identity, we obtain 
\begin{align*}
| Q_4^k(t; x,\bar x) | &\le  \frac{1}{(2\pi)^d} \int_0^t \!ds\! \int_{\RR^d}\! dy\,  \big|2-e^{-i\xi.\zeta} -e^{i\xi.\zeta}\big| \; 
\big| \cF\big(\varphi G(t-s,.)\big) \big|^2\, \mu(d\zeta)\\
& \leq C |\xi|^{2\gamma} \int_0^t ds \int_{\RR^d}  dy\;  g_\gamma(y) \big[ \big( \Sigma^k_x(s,.) G(t-s,.)\big) * \widetilde{\big(\Sigma^k_x(s,.) G(t-s,.)}\big)
\big](y),  
\end{align*}
where in the last inequality, we have used that for $\gamma\in (0,1]$,
$\vert 1-\cos(\xi.\zeta)\vert\le C (|\xi||\zeta|)^{2\gamma}$.

Consider the non-negative measure $g_\gamma(y)\big[ G(t-s,.) * G(t-s,.)\big](y) ds\ dy$. The Minkowski inequality with respect to this measure, the Plancherel identity and \eqref{s3-3.30}  yield 
\begin{align*}
 \| Q_4^k(t; x,\bar x) \|_{\frac{p}{2}} & 
 \le C |\xi|^{2\gamma} \int_0^t ds \int_{\RR^d} dy\ g_\gamma(y) \big[ G(t-s,.) * G(t-s,.)\big](y)\\
 &\qquad \qquad\qquad\qquad \quad\quad\times \sup_{y,z\in \RR^d}
\|\Sigma_x^k(s,y) \Sigma_x^k(s,y+z)\|_{\frac{p}{2}} \\
& \le C  |\xi|^{2\gamma} \int_0^t ds \sup_{y\in \RR^d} \|\Sigma_x(s,y)\|_p^2 \, 2 (1+T^2) C_\mu^{(\gamma)}.
\end{align*}
Thus, an argument similar to that proving  \eqref{s3-3.31} implies 
\beq
\label{s3-3.34}
\Vert Q_4^k(t; x,\bar x)\Vert_{\frac{p}{2}}
 \le C (T+T^3) C_\mu^{(\gamma)}\,  |x-\bar x|^{2\gamma}  \Big[c(\sigma) 
 + L(\sigma) \sup_{(s,y)\in[0,T]\times\rde} \Vert u(s,y)\Vert_p\Big]^2.
\eeq
The upper estimates \eqref{I2x-I2x}, \eqref{Q1}, \eqref{s3-3.33} and \eqref{s3-3.34} conclude the proof of \eqref{diff-I2-x-d}. 
\smallskip

The statement in part 2  is an immediate consequence of the definition of $\mathcal{N}_{2\sqrt{L(b)},p}(u)$ and Proposition \ref{p-s3.1}.
 The proof of the proposition is complete.
\end{proof}
\medskip


From Propositions \ref{p-s3.1}-\ref{s3-3-p3}, we derive estimates on space increments of the random field \eqref{n3} for $d = 2, 3$. 
Later on, they will be used to deduce estimates on time increments of $I_2(t,x)$.
For its further use, set
\beq
\label{izero}
K_0(u_0, v_0) = \begin{cases}
\Vert v_0\Vert_{\gamma_2} + \Vert \nabla u_0\Vert_{\infty,R+T} + \Vert \nabla u_0\Vert_{\gamma_1}, & d=2,\\
\Vert v_0\Vert_{\gamma_2} + \Vert \nabla u_0\Vert_{\infty,R+T} + \Vert\Delta u_0\Vert_{\gamma_1}, & d=3.
\end{cases}
\eeq

\begin{prop}
\label{s3-3-p4}
We are assuming the following.
\begin{enumerate}
\item The initial value functions $u_0$ and $v_0$ satisfy the conditions of Proposition \ref{s3-3-p1} 
with some H\"older exponents $\gamma_1, \gamma_2 \in (0,1]$.
\item The coefficients $\sigma$ and $b$ are globally Lipschitz continuous functions.
\item The covariance measure $\Lambda$ of the noise $W$ satisfies {\bf(h1)}, and the corresponding spectral measure $\mu$ satisfies {\bf(h2)}.
\end{enumerate}
{\rm{(i)}} \ Fix $T, R>0$. Then, for any $p\in[2,\infty)$ and $\alpha>0$,
there exist positive constants  $c_1(T,R)$, $c_2(T)$ and $c_3(T)$ such that if 
\begin{align}
\label{ccs}
C_1 &:=  c_1(T,R)\ K_0(u_0, v_0),\notag\\ 
C_2 &: = c_2(T) \, \big( p C_\mu^{(\gamma)} \big)^{\frac{1}{2}}\Big[c(\sigma) +   L(\sigma) \sup_{(t,x)\in[0,T]\times \rde}\Vert u(t,x)\Vert_p \Big],\notag\\
C_3 &:=  c_3(T) \, \big[  L(b)^2  + p\, C_\mu \,  L(\sigma)^2 \big],
\end{align}
with $C_\mu$ and $C_\mu^{(\gamma)}$ defined in \eqref{cmu} and \eqref{cmugamma}, 
then for any $t\in[0,T]$, and $x, \bar x\in B(0;R)$,
\begin{align} \label{s3-3.37}
\sup_{|z_1-z_2|\le|x-\bar x|}\Vert u(t,z_1) - u(t,z_2)\Vert_p^2 
\le \exp(T C_3)\,  \big(C_1^2  |x-\bar x|^{2(\gamma_1\wedge \gamma_2)}
+ C_2^2 |x-\bar x|^{2\gamma} \big) .
\end{align} 
Consequently,
\beq
\label{s3-3.38}
\sup_{t\in[0,T]}\sup_{|z_1-z_2|\le|x-\bar x|}\Vert u(t,z_1) - u(t,z_2)\Vert_p \le \tilde C |x-\bar x|^{\nu_1},
\eeq
with  $\nu_1= \min(\gamma, \gamma_1, \gamma_2)$,  and 
$\tilde C = C(R)\left(C_1 + C_2\right)\exp(TC_3/2)$. 
\smallskip

{{\rm(ii)}}\ Suppose furthermore that the Lipschitz constants $L(b)$, $L(\sigma)$ do not vanish and are such that
$L(b)\geq \left( 2^{12}\, 3^2\,  C_\mu^2\,  L(\sigma)^4\right)\vee \frac{1}{4}$. 
Then, for 
$p\in \big[ 2, 
 \sqrt{L(b)}/ \big(  2^5\, 3 \, C_\mu \, L(\sigma)^2\big) \big]$ we have
\beqn
C_2  \le c_2(T) \,\big( p\, C_\mu^{(\gamma)} \big)^{\frac{1}{2}} \, \Big[ c(\sigma) +  L(\sigma)\,  e^{2T\sqrt{L(b)}}\mathcal{N}_{2\sqrt{L(b)},p}(u)\Big],
\eeqn
with $\mathcal{N}_{2\sqrt{L(b)},p}(u)$ satisfying \eqref{p-s3.1-e1}. \end{prop}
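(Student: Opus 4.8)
The proof relies on the decomposition $u=I_0+I_1+I_2$ of \eqref{decom}, on the space-increment bounds already obtained for the three terms in this subsection, and on Gronwall's lemma. Fix $p\in[2,\infty)$, $T,R>0$, and a pair $x,\bar x\in B(0;R)$; write $h=|x-\bar x|$ and, for $t\in[0,T]$, set
\beqn
\Psi(t):=\sup_{0\le|z_1-z_2|\le h}\Vert u(t,z_1)-u(t,z_2)\Vert_p,
\eeqn
which is finite, being bounded by $2\sup_{(t,x)\in[0,T]\times\rde}\Vert u(t,x)\Vert_p<\infty$ by Theorem \ref{randomfield-multi}. For a fixed pair $z_1,z_2$ with $|z_1-z_2|\le h$ I would start from $\Vert u(t,z_1)-u(t,z_2)\Vert_p^2\le 3\sum_{i=0}^2\Vert I_i(t,z_1)-I_i(t,z_2)\Vert_p^2$ and estimate the three summands: the $I_0$-increment is bounded using \eqref{s3-3.02} (for $d=2$) or \eqref{s3-3.07} (for $d=3$) of Proposition \ref{s3-3-p1}, giving $C(T,R)^2 K_0(u_0,v_0)^2\, h^{2(\gamma_1\wedge\gamma_2)}$; the $I_1$-increment is bounded by \eqref{s3-3.15} of Proposition \ref{s3-3-p2} together with the Cauchy--Schwarz inequality, giving $L(b)^2T^3\int_0^t\Psi(s)^2\,ds$; and the $I_2$-increment is bounded by \eqref{diff-I2-x-d} of Proposition \ref{s3-3-p3}, giving the sum of $Cp\,(1+T^2)C_\mu L(\sigma)^2\int_0^t\Psi(s)^2\,ds$ and $Cp\,(T+T^3)C_\mu^{(\gamma)}\,h^{2\gamma}\big[c(\sigma)+L(\sigma)\sup_{(s,y)\in[0,T]\times\rde}\Vert u(s,y)\Vert_p\big]^2$. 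In each estimate the suprema over pairs at distance exactly $|z_1-z_2|$ appearing in Propositions \ref{s3-3-p2} and \ref{s3-3-p3} are dominated by $\Psi(s)$, since $|z_1-z_2|\le h$.

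Since the resulting upper bound does not depend on the particular pair $z_1,z_2$, taking the supremum over all pairs with $|z_1-z_2|\le h$ yields
\beqn
\Psi(t)^2\le C_1^2\,h^{2(\gamma_1\wedge\gamma_2)}+C_2^2\,h^{2\gamma}+C_3\int_0^t\Psi(s)^2\,ds,\qquad t\in[0,T],
\eeqn
with $C_1,C_2,C_3$ as in \eqref{ccs} for suitable $c_1(T,R),c_2(T),c_3(T)$ absorbing the universal constants above (e.g. $c_1(T,R)=\sqrt3\,C(T,R)$, $c_2(T)$ a multiple of $(T+T^3)^{1/2}$, and $c_3(T)$ a multiple of $\max(T^3,1+T^2)$). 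As $\Psi$ is bounded on $[0,T]$, Gronwall's lemma applies and gives $\Psi(t)^2\le\big(C_1^2 h^{2(\gamma_1\wedge\gamma_2)}+C_2^2 h^{2\gamma}\big)\exp(C_3 t)$, which is \eqref{s3-3.37}. Taking square roots, using $\sqrt{a^2+b^2}\le a+b$ for $a,b\ge0$, noting that $h\le 2R$ so that $h^{\gamma_1\wedge\gamma_2}\le C(R)\,h^{\nu_1}$ and $h^{\gamma}\le C(R)\,h^{\nu_1}$ with $\nu_1=\min(\gamma,\gamma_1,\gamma_2)$, and finally taking the supremum over $t\in[0,T]$, I obtain \eqref{s3-3.38} with $\tilde C=C(R)(C_1+C_2)\exp(TC_3/2)$.

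For part (ii), it suffices to observe that by the definition \eqref{norm} of the seminorm, $\Vert u(t,x)\Vert_p\le e^{2\sqrt{L(b)}\,t}\,\mathcal{N}_{2\sqrt{L(b)},p}(u)\le e^{2T\sqrt{L(b)}}\,\mathcal{N}_{2\sqrt{L(b)},p}(u)$ for all $t\in[0,T]$ and $x\in\rde$, hence $\sup_{(t,x)\in[0,T]\times\rde}\Vert u(t,x)\Vert_p\le e^{2T\sqrt{L(b)}}\,\mathcal{N}_{2\sqrt{L(b)},p}(u)$. Substituting this into the expression of $C_2$ in \eqref{ccs} gives the claimed inequality, and Proposition \ref{p-s3.1} — whose conditions on $L(b)$, $L(\sigma)$ and on the range of $p$ coincide with those assumed in part (ii) — provides the bound \eqref{p-s3.1-e1} for $\mathcal{N}_{2\sqrt{L(b)},p}(u)$.

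The main obstacle, beyond keeping track of the numerous constants, is making the Gronwall step rigorous: one must know beforehand that $t\mapsto\Psi(t)$ is finite and locally bounded, which is precisely what $\sup_{(t,x)\in[0,T]\times\rde}\Vert u(t,x)\Vert_p<\infty$ from Theorem \ref{randomfield-multi} supplies. The remaining points — exchanging the supremum over pairs $(z_1,z_2)$ with the deterministic bounds, and reconciling the ``distance exactly $|x-\bar x|$'' form of Propositions \ref{s3-3-p2}, \ref{s3-3-p3} with the ``distance at most $h$'' quantity $\Psi$ — are routine monotonicity observations.
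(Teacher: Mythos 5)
Your proposal is correct and follows essentially the same route as the paper: decompose $u=I_0+I_1+I_2$, invoke the space-increment bounds \eqref{s3-3.02}/\eqref{s3-3.07}, \eqref{s3-3.15} (squared via Cauchy--Schwarz) and \eqref{diff-I2-x-d} to obtain the integral inequality for $\sup_{|z_1-z_2|\le|x-\bar x|}\Vert u(t,z_1)-u(t,z_2)\Vert_p^2$, then apply Gronwall's lemma, and deduce part (ii) from the definition of ${\mathcal N}_{2\sqrt{L(b)},p}(u)$ together with Proposition \ref{p-s3.1}. Your extra remarks on the finiteness of the supremum (needed to run Gronwall) and on passing from distance exactly $|x-\bar x|$ to distance at most $|x-\bar x|$ are sound and only make explicit what the paper leaves implicit.
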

\begin{proof} 
(i).  We first prove
\begin{align}
\label{s3-3.370}
\sup_{|z_1-z_2|\le|x-\bar x|}\Vert u(t,z_1) - &u(t,z_2)\Vert_p^2 \le C_1^2 |x-\bar x|^{2(\gamma_1\wedge \gamma_2)}+ C_2^2 |x-\bar x|^{2\gamma}
\notag\\
& \quad 
+ C_3\! \int_0^t \! ds \sup_{|z_1-z_2| \le |x-\bar x|}\Vert u(s,z_1) - u(s,z_2)\Vert_p^2.
\end{align}
Indeed, using \eqref{sol-1} and \eqref{decom}, the first term on the right-hand side comes from  \eqref{s3-3.02} and \eqref{s3-3.07}. 
The second one comes from the last term on the right-hand side of \eqref{diff-I2-x-d}.
Finally, the very last term is obtained by the sum of the upper bound \eqref{s3-3.15} 
and the first term on the right-hand side of  \eqref{diff-I2-x-d}. 

Apply Gronwall's lemma to the function 
 $ t\mapsto \sup_{|z_1-z_2|\le |x-\bar x|}\Vert u(t,z_1) - u(t, z_2)\Vert_p^2$ to obtain \eqref{s3-3.37}, and then 
\eqref{s3-3.38}.

 The claim (ii) follows form  the definition of $\mathcal{N}_{2\sqrt{L(b)},p}(u)$
 and Proposition \ref{p-s3.1}.
\end{proof}
\noindent{\em Time increments of $I_2(t,x)$}
\smallskip

In order to deduce $L^p$-estimates of increments in time of the stochastic integral term  $I_2(t,x)$, additional assumptions on the covariance of the noise are needed.
\smallskip

\noindent{\bf(h3)}  The spectral measure $\mu$ is such that there exists $\nu>0$ and $C>0$ for which
\beq
\label{h3}
\int_{\rde} \left\vert\cF G(t)(\zeta)\right\vert^2\ \mu(d\zeta)\le C t^\nu,\  {\text{for any}}\ t\in[0,T].
\eeq

\noindent{\bf(h4)} The covariance density function $f$ satisfies the following conditions:
\begin{enumerate}
\item  There exists $b>0$ and $C>0$  such that for any $h\in[0,T]$, 
\beq
\label{h41} 
\int_0^T\!\! ds\  s \int_{\rde} \!\! \int_{\rde}G(1,dy) G(1,dz)
\big\vert f(s(y+z)+h(y+z)) - f(s(y+z)+hz)\big\vert
\le C h^b.
\eeq
\item There exists $\bar b>0$ and $C>0$ such that for any $h\in[0,T]$, 
\begin{align}
\label{h42} 
&\int_0^T ds\ s^2 \int_{\rde} \int_{\rde} G(1,dy) G(1,dz)\notag \\
& \; \times  \big\vert f\big(s(y+z)+h(y+z)\big) 
 - f\big(s(y+z)+hy\big ) - f\big( s(y+z)+hz\big) 
+ f\big(s(y+z)\big)\big\vert \notag\\
&
\le C h^{\bar b}.
\end{align}
\end{enumerate}
\medskip

According to \eqref{s3.11Bis}--\eqref{cmu}, the left-hand side of \eqref{h3} is a function of $t$ uniformly bounded over bounded intervals. Assumption {\bf(h3)} provides a growth rate for this function.
\smallskip

Up to scalings, the assumption {\bf(h4)} is on estimates of one and two-dimensional increments of the covariance density in a $L^2$-type norm. We shall give in Section \ref{s4} examples where these conditions are satisfied.

\begin{prop}
\label{s3-3-p5}
Assume that the hypotheses (1)--(3) of Proposition \ref{s3-3-p4} hold.
Suppose also that
the hypotheses  {\bf(h3)} and  {\bf(h4)} on the covariance of the noise are satisfied. Then there exists a constant $C(T,\nu)$ 
such that for any $p\in[2,\infty)$, $t, \bar t\in[0,T]$ and  $x\in\rde$,
\begin{align}
\label{s3-3.680}
\Vert  & I_2(t,x) - 
 I_2(\bar t,x)\Vert_p^2 \leq C(T,\nu)\,  p\,  \Big( C_\mu L(\sigma)^2 \tilde C^2  |t-\bar t|^{2 \nu_1} \notag\\
& 
+ \Big[c(\sigma) + L(\sigma) \sup_{(t,x)\in[0,T]\times \rde}\Vert u(t,x)\Vert_p\Big]^2 \big\{ |t-\bar t|^{1+ \nu} 
+  \, |t-\bar t|^{\min( b+1,\bar b,\tilde \alpha)}  \big\}\notag\\
&
+ L(\sigma) \tilde C\Big[ c(\sigma) + L(\sigma)  \sup_{(t,x)\in[0,T]\times \rde}\Vert u(t,x)\Vert_p\Big]  |t-\bar t|^{\nu_1+\min(b,1)}\Big) , 
\end{align}
where  $\nu_1=\min(\gamma,\gamma_1,\gamma_2)$,   $\tilde C$ is defined in Proposition \ref{s3-3-p4},  
$\tilde \alpha =(1+\nu)\wedge 2$  if $\nu\ne 1$ and $\tilde \alpha <2$ if $\nu=1$. 
\medskip

If, as in Proposition \ref{p-s3.1}, the Lipschitz constants $L(b)$, $L(\sigma)$ are such that
$L(b)\geq \left( 2^{12}\, 3^2\,  C_\mu^2\,  L(\sigma)^4\right)\vee \frac{1}{4}$,  where $C_\mu$ is given in \eqref{cmu}, 
then there exists a constant $\overline{C(\nu,T)}$ such that  for 
$p\in \Big[ 2, \frac{  \sqrt{L(b)}}{ 2^5\, 3 \, C_\mu \, L(\sigma)^2}\Big]$, $t,\bar{t}\in [0,T]$ and $x\in \RR^d$,
\begin{align}
\label{s3-3.6800}
\Vert   I_2&(t,x) - 
 I_2(\bar t,x)\Vert_p^2 \leq \overline{C(\nu,T)}\, p\,  \Big(  C(T) C_\mu  L(\sigma)^2 \tilde C^2  |t-\bar t|^{2 \nu_1} \notag\\
& 
+ \Big[c(\sigma) + L(\sigma) e^{2T\sqrt{L(b)} }{\mathcal N}_{2\sqrt{L(b)},p}(u)\Big]^2 
\big\{ |t-\bar t|^{1+ \nu} + |t-\bar t|^{\min(b+1,\bar b,\tilde \alpha)}  \big\}\notag\\
&
+  L(\sigma) \tilde C\Big[ c(\sigma) + 
L(\sigma)  e^{2T\sqrt{L(b)} }{\mathcal N}_{2\sqrt{L(b)},p}(u)\Big] |t-\bar t|^{\nu_1+\min(b,1)}\Big) , 
\end{align}
with $\mathcal{N}_{2\sqrt{L(b)},p}(u)$ satisfying \eqref{p-s3.1-e1}. 

\end{prop}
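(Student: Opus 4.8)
\emph{Plan of proof.} Throughout I assume $0\le\bar t\le t\le T$ and set $h=t-\bar t$. The starting point is the decomposition of the stochastic convolution into the contribution of the time slab $[\bar t,t]$ and the contribution of the kernel increment on $[0,\bar t]$:
\[
I_2(t,x)-I_2(\bar t,x)=A_1+A_2,\qquad A_1=\int_{\bar t}^{t}\!\int_{\rde}G(t-s,x-dy)\,\sigma(u(s,y))\,W(ds,dy),
\]
and $A_2=\int_0^{\bar t}\!\int_{\rde}[G(t-s,x-dy)-G(\bar t-s,x-dy)]\,\sigma(u(s,y))\,W(ds,dy)$. To each term I apply the sharp Burkholder--Davis--Gundy inequality \eqref{s3.4}, bound $|\sigma(u(s,y))\sigma(u(s,z))|\le\tfrac12(\sigma(u(s,y))^2+\sigma(u(s,z))^2)$, use Minkowski's inequality and the Lipschitz bound \eqref{c-L}. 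First I treat $A_1$: with the notation \eqref{s3.11Bis} this yields $\|A_1\|_p^2\le Cp\,[c(\sigma)+L(\sigma)\sup_{(s,y)}\|u(s,y)\|_p]^2\int_{\bar t}^t J(t-s)\,ds$, and changing variables together with {\bf(h3)} gives $\int_{\bar t}^t J(t-s)\,ds=\int_0^{h}J(r)\,dr\le C\,h^{1+\nu}$, which produces the term $[c(\sigma)+L(\sigma)\sup\|u\|_p]^2\,|t-\bar t|^{1+\nu}$.

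The term $A_2$ is the heart of the matter. Using \eqref{s3.4} one reduces to estimating $E(\mathcal Q)^{p/2}$, where $\mathcal Q=\int_0^{\bar t}ds\iint[\Delta G](x-dy)[\Delta G](x-dz)f(y-z)\sigma(u(s,y))\sigma(u(s,z))$ and $\Delta G=G(t-s,\cdot)-G(\bar t-s,\cdot)$. Expanding the tensor square of $\Delta G$ into its four pieces and applying the scaling identity $\int\phi(x-y)G(\tau,dy)=\tau\int\phi(x-\tau w)G(1,dw)$ (valid for $d=1,2,3$, with $\tau\in\{t-s,\bar t-s\}$, and writing $r:=\bar t-s$, $a,a'\in\{r,r+h\}$), each piece becomes $\int_0^{\bar t}dr\,(r+h)^{j}r^{k}\iint G(1,dw_1)G(1,dw_2)\,f(a'w_2-aw_1)\,\sigma(u(s,x-aw_1))\sigma(u(s,x-a'w_2))$ with $j+k=2$. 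Following the transfer-of-increments device of \cite{DSS-Memoirs,HHN} (and Lemma \ref{l-a-1} to pass to the spectral side where needed), I would then: (i) replace, with controlled error, each $\sigma(u(s,x-(r+h)w_i))$ by $\sigma(u(s,x-rw_i))$ --- since $|(r+h)w_i-rw_i|\le h$ on $\mathrm{supp}\,G(1,\cdot)$, the error is bounded in $L^p$ by $L(\sigma)\sup_s\sup_{|z_1-z_2|\le h}\|u(s,z_1)-u(s,z_2)\|_p\le L(\sigma)\tilde C\,h^{\nu_1}$ by \eqref{s3-3.38}; (ii) reassemble the prefactors using $(r+h)^2-2(r+h)r+r^2=h^2$, etc.; (iii) group the four integrals against $f$ into a genuine second difference of $f$ with $h$-shifts plus lower-order first differences. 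The $r^2$-weighted second difference is exactly the object bounded by $Ch^{\bar b}$ in {\bf(h4)}\,(2); the $r$-weighted first differences, carrying an extra factor $h$ from the prefactor expansion, are bounded by $Ch^{b+1}$ via {\bf(h4)}\,(1); the purely prefactor-driven remainders combine with $J$-type bounds and {\bf(h3)} to give $Ch^{\tilde\alpha}$ with $\tilde\alpha=(1+\nu)\wedge2$ (the borderline $\nu=1$ forcing a slightly smaller exponent because of the logarithmic divergence of $\int_0^h r\,r^{\nu-1}dr$). The $\sigma$-replacement errors of step (i) produce, according to whether one or both $\sigma$'s are replaced, the terms $L(\sigma)\tilde C[c(\sigma)+L(\sigma)\sup\|u\|_p]\,|t-\bar t|^{\nu_1+\min(b,1)}$ and $C_\mu L(\sigma)^2\tilde C^2\,|t-\bar t|^{2\nu_1}$, the extra power $\min(b,1)$ in the cross term arising from a one-variable increment of $f$ (via {\bf(h4)}\,(1)) or, in its absence, from a residual factor $|t-\bar t|$ in the prefactor. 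Collecting all contributions and using \eqref{c-L} to bound $\sup\|\sigma(u(s,y))\|_p$ yields \eqref{s3-3.680}. One must also check that the computations of \cite{HHN}, written for $d=3$, carry over to $d=2$; they do, since only the scaling $G(\tau,dy)=\tau G(1,dy)$, the compact support of $G(1,\cdot)$, Lemma \ref{l-a-1}, and the bounds {\bf(h2)}--{\bf(h4)} are used.

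Finally, for \eqref{s3-3.6800} I would restrict $p$ to the interval $[2,\sqrt{L(b)}/(2^5\cdot3\,C_\mu L(\sigma)^2)]$ and replace $\sup_{(t,x)}\|u(t,x)\|_p$ by $e^{2T\sqrt{L(b)}}\,\mathcal{N}_{2\sqrt{L(b)},p}(u)$ using the definition \eqref{norm} of the seminorm, so that Proposition \ref{p-s3.1} applies and \eqref{p-s3.1-e1} gives the stated bound on $\mathcal{N}_{2\sqrt{L(b)},p}(u)$. The delicate point --- and the main obstacle --- is step (iii): organising the prefactor expansion and the four $f$-integrals so that every surviving piece matches one of the precise exponents $b+1$, $\bar b$, $\tilde\alpha$, $\nu_1+\min(b,1)$, $2\nu_1$, $1+\nu$ in \eqref{s3-3.680}, keeping track of the constants $C_\mu$ and $C_\mu^{(\gamma)}$, and correctly handling the scaling factors $r$ and $r+h$ (which multiply singular measures, so $G(1,\cdot)$ cannot be rescaled away) together with the borderline case $\nu=1$.
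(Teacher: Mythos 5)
Your plan is correct and follows essentially the same route as the paper's proof: the same splitting into the time-slab term (bounded via the sharp BDG inequality, \eqref{c-L} and {\bf(h3)}, giving $|t-\bar t|^{1+\nu}$) and the kernel-increment term treated by the transfer-of-increments device of \cite{DSS-Memoirs}, \cite{HHN}, with the four resulting pieces bounded exactly as you indicate --- the double $\Theta$-increment by \eqref{s3-3.38} and $C_\mu$ (exponent $2\nu_1$), the cross terms by {\bf(h4)}(1) plus a residual factor $h$ (exponent $\nu_1+\min(b,1)$), and the second-difference term by {\bf(h4)}(2), {\bf(h4)}(1) and {\bf(h3)} (exponents $\bar b$, $b+1$, $\tilde\alpha$, with the same treatment of the borderline $\nu=1$) --- and the second inequality obtained from Proposition \ref{p-s3.1} via the seminorm \eqref{norm}, just as in the paper. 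The only cosmetic difference is the labelling of the slab ($[\bar t,t]$ versus $[t,\bar t]$) and that your step (iii) is stated as a plan, but its announced organisation coincides with the paper's decomposition into $R_1,\dots,R_4$ and $\tilde I_1,\dots,\tilde I_4$.
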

\begin{proof}
For $0\le t\le \bar t\le T$ and $x\in \RR^d$, set
\beqn
I_{2,1}(t,\bar t;x)=\int_t^{\bar t} \int_{\rde} G(\bar t-s,x-dy) \sigma(u(s,y))\ W(ds,dy).
\eeqn
By applying  Burkholder-Davis-Gundy's inequality, and then Minkowski's inequality, 
\begin{align*}
\Vert I_{2,1}(t,\bar t;x)\Vert_p^2 
&\le 4p \int_t^{\bar t} ds J(\bar t-s) \sup_{y\in\rde}\Vert \sigma(u(s,y))\Vert_p^2\\
&\le p\ C |t-\bar t|^{1+\nu}\Big[c(\sigma) + L(\sigma)\sup_{(s,y)\in[0,T]\times \rde}\Vert u(s,y)\Vert_p\Big]^2,
\end{align*}
where $J$ is defined in \eqref{s3.11Bis}, and the last upper estimate is deduced from {\bf(h3)} (see \eqref{h3}).

Let 
\begin{equation} \label{I22}
I_{2,2}(t,\bar t; x) = \int_0^t \int_{\rde}
 [G(\bar t -s, x-dy) - G(t-s, x-dy)]\ \sigma(u(s,y))\ W(ds,dy).
 \end{equation}
 
 We study the $L_{p}$-norm of this term following the proof of \cite[Theorem 4.1]{HHN}. This uses the transfer of increments trick 
 introduced in \cite[Section 3.2]{DSS-Memoirs}. Applying the Burkholder-Davies-Gundy inequality, we obtain 
$ \left\Vert I_{2,2}(t,\bar t; x)\right\Vert_p^2 \le 4p\sum_{i=1}^4\left\Vert R_i(t,\bar t; x)\right\Vert_{\frac{p}{2}}$,
 where, letting $h:=\bar t - t$ and $\Theta_{t,x}(s,y) = \sigma(u(t-s, x-y))$, we set 
 \begin{align*}
 R_1(t,\bar t; x) &= \int_0^t ds\int_{\rde}\int_{\rde} G(1,dy) G(1, dz)(s+h)^2 f\big( (s+h)y-(s+h)z\big)\\
 & \quad\times \big[\Theta_{t,x}(s,(s+h)y) - \Theta_{t,x}(s,sy)\big] \big[\Theta_{t,x}(s,(s+h)z) - \Theta_{t,x}(s,sz)\big],\\
 R_2(t,\bar t; x) &= \int_0^t ds\int_{\rde}\int_{\rde} G(1,dy) G(1, dz)\\
 & \quad\times \big[ (s+h)^2 f\big( (s+h)y-(s+h)z\big) - s(s+h)f\big( sy-(s+h)z\big)\big]\\
 & \quad\times \big[\Theta_{t,x}(s,(s+h)z) - \Theta_{t,x}(s,sz)\big] \Theta_{t,x}(s,sy),\\
 R_3(t,\bar t; x) &= \int_0^t ds\int_{\rde}\int_{\rde} G(1,dy) G(1, dz)\\
 & \quad\times \big[(s+h)^2 f\big( (s+h)y-(s+h)z\big) - s(s+h)f\big( (s+h)y-sz\big)\big]\\
 & \quad\times \big[\Theta_{t,x}(s,(s+h)y) - \Theta_{t,x}(s,sy)\big] \Theta_{t,x}(s,sz),\\
 R_4(t,\bar t; x) &= \int_0^t ds\int_{\rde}\int_{\rde} G(1,dy) G(1, dz)\\
 & \quad\times \big[(s+h)^2 f\big((s+h)y - (s+h)z\big) - s(s+h)f\big(sy-(s+h)z\big) \\
 & \quad\qquad - s(s+h) f\big( (s+h)y - sz\big) + s^2 f\big(sy-sz\big)\big]\\
 & \quad\times \Theta_{t,x}(s,sy)\Theta_{t,x}(s,sz).
 \end{align*}
 Notice that the linear growth and Lipschitz continuity assumptions on $\sigma$ imply that for any $p\in[2,\infty)$, every $s,t\in[0,T]$ and $x,y,z\in\rde$,
  \beq
  \label{s3-3.590}
   \sup_{0\le s\le t\le T; (x,y)\in\rde} \Vert\Theta_{t,x}(s,y)\Vert_p \le c(\sigma) + L(\sigma)\sup_{(t,x)\in[0,T]\times\rde} \Vert u(t,x)\Vert_p,
   \eeq
   and
   \begin{align}
   \label{s3-3.591}
  \sup_{0\le s\le t\le T; (x,y,z)\in\rde,|y-z|\leq r} \!\!\Vert\Theta_{t,x}(s,y) - \Theta_{t,x}(s,z)\Vert_p 
  &\le  L(\sigma) \!
  \sup_{t\in[0,T], |y-z|\leq r} \!\!\Vert u(t,y)-u(t,z)\Vert_p\notag\\
  &
  \le L(\sigma) \tilde C r^{\nu_1},
  \end{align}
  where the last inequality follows from \eqref{s3-3.38}.
  \medskip
  
 \noindent{\em Upper bound of  $\Vert R_1(t,\bar t; x)\Vert_{\frac{p}{2}}$.}
 Apply the Minkowski and Cauchy-Schwarz inequalities. Then, using  \eqref{s3-3.591} we obtain 
 \begin{align}
  \label{s3-3.60}
 \Vert R_1(t,\bar t; x) &\Vert_{\frac{p}{2}} \le L(\sigma)^2 \tilde C^2 |t-\bar t|^{2 \nu_1}\notag\\
 & \quad \times\int_0^t ds \int_{\rde}\int_{\rde} G(1,dy) G(1, dz) (s+h)^2f\big( (s+h)y-(s+h)z \big).
 \end{align}
 Consider the change of variables $((s+h)y, (s+h)z)\mapsto (y,z)$; using the scaling property, 
  \eqref{s3.11Bis} and \eqref{cmu}, we deduce 
 \begin{align}
 \label{s3-3.61}
  &\int_{\rde}\int_{\rde} G(1,dy) G(1, dz) (s+h)^2f\big( (s+h)y-(s+h)z \big)\notag\\ 
  &\qquad \qquad= (2\pi)^{-d} \int_{\rde} \vert\cF G(s+h)(\zeta)\vert^2\ \mu(d\zeta) \le 2(1+(2T)^2)) C_\mu. 
  \end{align}
  Hence, \eqref{s3-3.60} and \eqref{s3-3.61} imply
  \beq
  \label{s3-3.62}
  \Vert R_1(t,\bar t; x)\Vert_{\frac{p}{2}} \le 2\, \big[ 1+(2T)^2 \big] \,  C_\mu L(\sigma)^2 \tilde C^2 |t-\bar t|^{2 \nu_1},
 \eeq
 where $\tilde C$ is defined in  Proposition \ref{s3-3-p4}.
  \bigskip
  
  \noindent{\em Upper bound of  $\Vert R_2(t,\bar t; x)\Vert_{\frac{p}{2}}$ and $\Vert R_3(t,\bar t; x)\Vert_{\frac{p}{2}}$.}
 We will only consider $\Vert R_2(t,\bar t; x)\Vert_{\frac{p}{2}}$, since $\Vert R_3(t,\bar t; x)\Vert_{\frac{p}{2}}$ is similar.
 Set 
 \begin{align*}
R_{2,1}(t,\bar t; x) &= \int_0^t ds\int_{\rde}\int_{\rde} G(1,dy) G(1, dz)\\
&\qquad \times s(s+h)\big[f\big( (s+h)y-(s+h)z\big) - f\big( sy-(s+h)z\big)\big]\\
&\qquad \times \big[\Theta_{t,x}(s,(s+h)z) - \Theta_{t,x}(s,sz)\big] \Theta_{t,x}(s,sy),
\end{align*}
Apply the change of variable $z\mapsto -z$ along with the Minkowski and Cauchy-Schwarz inequalities to obtain
\begin{align}
\label{s3-3.63}
 &\Vert R_{2,1}(t,\bar t; x)\Vert_{\frac{p}{2}} 
  \le \sup_{0\le s\le t\le T; (x,z_1,z_2)\in\rde, |z_1-z_2| \le h} \left(\Vert \Theta_{t,x}(s,z_1)-\Theta_{t,x}(s,z_2)\Vert_p\right) \notag \\
 &\quad \times  \sup_{0\le s\le t\le T; (x,y)\in\rde}\left(\Vert \Theta_{t,x}(s,y)\Vert_p\right)\notag\\
&\quad \times 
 \int_0^t ds\! \int_{\rde}\! \int_{\rde}\!  G(1,dy) G(1, dz)
 s(s+h) 
\big| f\big( (s(y+z)+h(y+z)\big) - f\big( s(y+z)+hz\big)\big| \notag\\
 &\; \le C T L(\sigma) \tilde C |t-\bar t|^{\nu_1+b}
 \Big[c(\sigma) + L(\sigma)  \sup_{(t,x)\in[0,T]\times \rde}\Vert u(t,x)\Vert_p\Big],
 \end{align}
 where we have used \eqref{s3-3.590},  \eqref{s3-3.591} and assumption {\bf (h4)} (see \eqref{h41}). 
 \smallskip
 
 Define
  \begin{align*}
R_{2,2}(t,\bar t; x) = \int_0^t ds\int_{\rde}\int_{\rde}& G(1,dy) G(1, dz)
 h(s+h)  f\big( (s+h)y-(s+h)z\big)\\
& \times\big[\Theta_{t,x}(s,(s+h)z) - \Theta_{t,x}(s,sz)\big] \Theta_{t,x}(s,sy),
\end{align*}
A computation similar to that  used to upper estimate $\| R_{2,1}(t,\bar t; x)\|_p$ implies 
\begin{align*}
 &\Vert R_{2,2}(t,\bar t; x)\Vert_{\frac{p}{2}} 
 \le C L(\sigma)\tilde C |t-\bar t|^{\nu_1} \Big[c(\sigma) + L(\sigma)  \sup_{(t,x)\in[0,T]\times \rde}\Vert u(t,x)\Vert_p\Big]\\
 &\qquad \qquad  \times  \int_0^t ds\int_{\rde}\int_{\rde} G(1,dy) G(1, dz) h(s+h)\big| f\big( (s+h)y-(s+h)z\big)\big| .
\end{align*}
Using the change of variables $((s+h)y,(s+h)z)\mapsto (y,z)$, the scaling property, \eqref{s3.11Bis} 
and {\bf (h3)}, we obtain 
\begin{align*}
 &\int_0^t ds\int_{\rde}\int_{\rde} G(1,dy) G(1, dz) h(s+h)\left[f((s+h)y-(s+h)z)\right]\\
&\quad  = (2\pi)^{-d} h \int_0^t \frac{ds}{s+h}\int_{\rde} \mu(d\zeta) \vert \cF G(s+h)(\zeta)\vert^2 
\le C \ h\int_0^t ds\ (s+h)^{\nu-1} \le C\, T^\nu \ h.
 \end{align*}
 Thus,
 \beq
 \label{s3-3.64}
  \Vert R_{2,2}(t,\bar t; x)\Vert_{\frac{p}{2}}
   \le C T^\nu  L(\sigma)\tilde C |t-\bar t|^{\nu_1+1} \Big[c(\sigma) + L(\sigma)  \sup_{(t,x)\in[0,T]\times \rde}\Vert u(t,x)\Vert_p\Big].
\eeq
Since $R_{2}(t,\bar t; x)=R_{2,1}(t,\bar t; x)+R_{2,2}(t,\bar t; x)$, from \eqref{s3-3.63} and \eqref{s3-3.64}, we deduce
 \begin{align}
 \label{s3-3.65}
  \Vert R_{2}(t,\bar t; x)\Vert_{\frac{p}{2}}
   \le & \; C (T+T^\nu) \, L(\sigma)\tilde C |t-\bar t|^{\nu_1+\min(b,1)} 
     \Big[c(\sigma) + L(\sigma)  \sup_{(t,x)\in[0,T]\times \rde}\Vert u(t,x)\Vert_p\Big].
\end{align}
\medskip

\noindent{\em Upper bound of  $\Vert R_4(t,\bar t; x)\Vert_{\frac{p}{2}}$.}
 Using Minkowski's inequality and  \eqref{s3-3.590}, we obtain
 \begin{align*}
&  \Vert R_{4}(t,\bar t; x)\Vert_{\frac{p}{2}}
  \le  \Big[c(\sigma) + L(\sigma)  \sup_{(t,x)\in[0,T]\times \rde}\Vert u(t,x)\Vert_p\Big]^2  \, I(t,h), 
 \end{align*}
where 
 \begin{align*}
I(t,h)=& \int_0^t ds\int_{\rde}\int_{\rde} G(1,dy) G(1, dz)
\Big| (s+h)^2 f\big( (s+h)(y-z)  \big) \\
&\quad - s(s+h)f\big(sy-(s+h)z\big)- s(s+h) f\big((s+h)y - sz\big) + s^2 f\big(s(y-z) \big)\Big| .
 \end{align*}
Use the  change of variable $z\mapsto -z$ to see that  $I(t,h)=\sum_{j=1}^4 \tilde{I}_j(t,h)$, with
 \begin{align*}
 \tilde{I}_1(t,h)= & \int_0^t ds\int_{\rde}\int_{\rde} G(1,dy) G(1, dz)s^2 \\
 &\times\big\vert f\big((s+h)(y+z) \big) - f\big(sy+ (s+h)z\big) 
 -f\big((s+h)y+sz\big)+f\big(s(y+z)\big)\big\vert,\\
 \tilde{I}_2(t,h)= & \int_0^t ds\int_{\rde}\int_{\rde} G(1,dy) G(1, dz)\ sh\\
 &\times\big\vert f\big( (s+h)y+(s+h) z)  \big)-f\big (sy-(s+h)z\big)\big\vert,\\
\tilde{ I}_3(t,h)=&  \int_0^t ds\int_{\rde}\int_{\rde} G(1,dy) G(1, dz)\ sh\\
 &\times\big\vert f\big((s+h)y + (s+h)z\big)-f\big((s+h)y+ sz\big)\big\vert,\\
 \tilde{I}_4(t,h)=&  \int_0^t ds\int_{\rde}\int_{\rde} G(1,dy) G(1, dz)\ h^2
 \big\vert f\big((s+h)y + (s+h)z \big)\big\vert.
\end{align*}
The hypothesis {\bf(h4)} implies $\tilde{I}_1(t,h) \le C\ h^{\bar b}$ and $\tilde{I}_2(t,h)+\tilde{I}_3(t,h) \le C\ h^{b+1}$,
(see \eqref{h42} and \eqref{h41}, respectively).  As for $\tilde{I}_4(t,h)$, we apply the change of variables
$((s+h)y,(s+h)z)\mapsto (y,z)$, the scaling property, \eqref{s3.11Bis} and  {\bf (h3)}; this yields 
\begin{align*}
\tilde{I}_4(t,h)& =  \int_0^t\!  \frac{ds}{(s+h)^2} h^2 \int_{\rde}\!\int_{\rde}\! G(s+h, dy) G(s+h, dz) f(y-z) 
\le C h^2 \int_0^t (s+h)^{\nu-2} ds.
\end{align*}
For $h\in (0,T]$ and $\varepsilon>0$  arbitrarily small, this yields  that,  up to some multiplicative constant,  $\tilde{I}_4(t,h)$
is upper estimated by $h^2 T^{\nu -1}$ if $\nu>1$ (respectively  by $h^{\nu+1}$ if $\nu<1$, and by $T^\epsilon h^{2-\varepsilon}$ if $\nu=1$).
Summarising the estimates above, we obtain 
\begin{align}
\label{s3-3.68}
\Vert& I_2(t,x)  - I_2(\bar t,x)\Vert_p^2 \le 2\left(\Vert I_{2,1}(t,\bar t;x)\Vert_p^2 +  \Vert I_{2,2}(t,\bar t,x)\Vert_p^2\right)\notag\\
&\le Cp  \Bigg( \Big[c(\sigma) + L(\sigma)  \sup_{(t,x)\in[0,T]\times \rde}\Vert u(t,x)\Vert_p\Big]^2 |t-\bar t|^{1+\nu} 
+ C_\mu L(\sigma)^2 \tilde C^2 |t-\bar t|^{2 \nu_1} \notag\\
&\quad + (T+T^\nu) L(\sigma) \tilde C\Big[c(\sigma) + L(\sigma)
 \sup_{(t,x)\in[0,T]\times \rde}\Vert u(t,x)\Vert_p \Big]|t-\bar t|^{\nu_1+\min(b,1)}\notag\\
&\quad + \tilde{C}(\nu,T) \Big[c(\sigma) + L(\sigma)  \sup_{(t,x)\in[0,T]\times \rde}\Vert u(t,x)\Vert_p\Big]^2|t-\bar t|^{\min(b+1,\bar b,\tilde \alpha)}
\Bigg),
\end{align}
where $\tilde \alpha =(1+\nu)\wedge 2$ 
if $\nu\ne 1$, while  $\tilde \alpha <2$   if $\nu=1$, and $\tilde{C}(\nu,T)$ is a positive constant. 
This completes the proof of \eqref{s3-3.680}. 

From \eqref{s3-3.680},  using Proposition \ref{p-s3.1}, we deduce \eqref{s3-3.6800}. This concludes the proof. 
\end{proof}

From Propositions \ref{s3-3-p1}--\ref{s3-3-p5} we  deduce Theorem \ref{s3-3-p6} below, 
which is the main ingredient towards obtaining uniform bounds on moments. 

The following constants $\nu_1$ and $\nu_2$ will be used in the next Theorem: 
\beq
\label{numbers}
\nu_1 = \min(\gamma, \gamma_1, \gamma_2), \quad 
\nu_2 = \min\left(\nu_1, \frac{1}{2}[\nu_1+ \min(b,1)] , \frac{1+\nu}{2}, \frac{b+1}{2}, \frac{\bar b}{2}, \frac{\tilde\alpha}{2}\right).
\eeq
We recall that  $\gamma_1$, $\gamma_2$, are the H\"older exponents of the initial values (see Proposition \ref{s3-3-p1}), $\gamma$ is the parameter in the assumption {\bf(h2)}, $\nu$ is defined in {\bf(h3)}, $b$ and $\bar b$ in  {\bf(h4)}, and $\tilde \alpha$ in the last part of the proof of Proposition \ref{s3-3-p5}. 
\smallskip

Let
\beq
\label{izeroplus}
\bar K_0(u_0, v_0) = \begin{cases}
\Vert v_0\Vert_{\gamma_2} +  \Vert \nabla u_0\Vert_{\infty,R+T} + \Vert \nabla u_0\Vert_{\gamma_1} + \Vert v_0\Vert_{\infty,R+T} , & d=2,\\
\Vert v_0\Vert_{\gamma_2} + \Vert \nabla u_0\Vert_{\infty,R+T} + \Vert\Delta u_0\Vert_{\gamma_1}, & d=3.
\end{cases}
\eeq
Comparing this definition with \eqref{izero}, we see that $K_0(u_0,v_0)\le \bar K_0(u_0, v_0)$.
\begin{theorem}
\label{s3-3-p6}
Suppose that the hypotheses (1)--(3) of Proposition \ref{s3-3-p4} hold, 
and that
the  conditions  {\bf(h3)} and {\bf(h4)} on the covariance of the noise are satisfied. Fix $T,R>0$. Then the following holds.

1. For any $p\in[2,\infty)$, there exists a constant $C(p,T,R)$ such that, for any 
$t, \bar t\in[0,T]$, $x, \bar x \in B(0; R)$ and $\alpha>0$, 
\begin{align}
\label{s3-3.72bis}
\frac{\Vert u(t,x) - u(\bar t, \bar x)\Vert_p}{|x-\bar x|^{\nu_1} + |t-\bar t|^{\nu_2}}
& \le C(p,T,R) \Big[ {\mathcal M}_1 + {\mathcal M}_2 + {\mathcal M}_3\ e^{T\alpha  } {\mathcal N}_{\alpha,p}(u)\Big],
\end{align}
 where
\begin{align}
\label{defmss}
\mathcal{M}_1 &= \bar K_0(u_0,v_0)\Big\{1+\Big[ L(b) + \sqrt p  \big(1+\sqrt{C_\mu}\, \big) 
 L(\sigma)\Big] \exp\Big(\frac{TC_3}{2}\Big)\Big\},\notag\\
\mathcal{M}_2 &= c(b) +\sqrt p c(\sigma) 
\Big\{1+(C_\mu^{(\gamma)})^{1/2}\left[ L(b) + \sqrt p \big(1+\sqrt{C_\mu}\, \big)  L(\sigma)\right] 
\exp\Big(\frac{TC_3}{2}\Big)\Big\},\notag\\
\mathcal{M}_3 & = \left[L(b) + \sqrt p  \big(1+\sqrt{C_\mu}\, \big)  L(\sigma)\right] 
\Big\{1+(p C_\mu^{(\gamma)})^{1/2}L(\sigma) 
\exp\Big(\frac{TC_3}{2}\Big)\Big\},
\end{align}
with $\bar K_0(u_0,v_0)$ and $C_3$ given in \eqref{izeroplus} and \eqref{ccs}, respectively. 
\medskip

2. Suppose further that $L(b)$ and $L(\sigma)$ do not vanish, and $L(b)\geq \big( 2^{12} 3^2 C_\mu^2 L(\sigma)^4\big)\vee \frac{1}{4}$.
 Then, for any $p\in \Big[2,\frac{\sqrt{L(b)}}{2^5 3 C_\mu L(\sigma)^2}\Big]$ and 
 $\mathcal{T}_0$ defined in \eqref{p-s3.1-e2}, we have 
 \begin{align}
\label{s3-3.72tris}
&\frac{\Vert u(t,x) - u(\bar t, \bar x)\Vert_p}{|x-\bar x|^{\nu_1} + |t-\bar t|^{\nu_2}}
 \le C(p,T,R)\notag \\
&\qquad \qquad \times \left[ {\mathcal M}_1 + {\mathcal M}_2 + {\mathcal M}_3\ e^{2 T\sqrt{L(b)}} 
\left(\mathcal{T}_0 + \frac{c(b)}{L(b)} + \frac{c(\sigma)}{L(\sigma)}\right)
\right].
\end{align} 
\end{theorem}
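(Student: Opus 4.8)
The plan is to write $u=I_0+I_1+I_2$ as in \eqref{sol-1}--\eqref{decom}, bound $\Vert u(t,x)-u(\bar t,\bar x)\Vert_p$ by $\sum_{i=0}^{2}\Vert I_i(t,x)-I_i(\bar t,\bar x)\Vert_p$, and handle each $I_i$-increment by splitting it into a spatial increment at fixed time plus a temporal increment at fixed space. For $I_0$ I would apply directly \eqref{diff-I0-2d} (if $d=2$) or \eqref{diff-I0-3d} (if $d=3$): the prefactor there is dominated by a constant multiple of $\bar K_0(u_0,v_0)$, and, since $\gamma_1\wedge\gamma_2\ge\nu_1\ge\nu_2$ while $x,\bar x\in B(0;R)$ and $t,\bar t\in[0,T]$, one has $|x-\bar x|^{\gamma_1\wedge\gamma_2}\le C(R)|x-\bar x|^{\nu_1}$ and $|t-\bar t|^{\gamma_1\wedge\gamma_2}\le C(T)|t-\bar t|^{\nu_2}$.

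For $I_1$ I would use \eqref{diff-I1-d}, replacing $\sup_{(t,x)}\Vert u(t,x)\Vert_p$ by $e^{\alpha T}{\mathcal N}_{\alpha,p}(u)$ and bounding the integrals of $\sup_{|z_1-z_2|=|x-\bar x|}\Vert u(s,z_1)-u(s,z_2)\Vert_p$ and $\sup_{|z_1-z_2|\le|t-\bar t|}\Vert u(s,z_1)-u(s,z_2)\Vert_p$ by $T\tilde C|x-\bar x|^{\nu_1}$ and $T\tilde C|t-\bar t|^{\nu_1}$ via \eqref{s3-3.38}; since $|t-\bar t|$ and $|t-\bar t|^{\nu_1}$ are both at most $C(T)|t-\bar t|^{\nu_2}$, this yields a bound $C(T)\big[c(b)+L(b)\big(e^{\alpha T}{\mathcal N}_{\alpha,p}(u)+\tilde C\big)\big]\big(|x-\bar x|^{\nu_1}+|t-\bar t|^{\nu_2}\big)$. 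For the spatial increment of $I_2$ I would take square roots in \eqref{diff-I2-x-d}, bound the integral term through \eqref{s3-3.38} (squared, then integrated) by $T\tilde C^2|x-\bar x|^{2\nu_1}$, and replace $|x-\bar x|^{\gamma}$ by $C(R)|x-\bar x|^{\nu_1}$ (legitimate because $\nu_1\le\gamma$ and $|x-\bar x|\le 2R$), obtaining $C(p,T,R)\big[\sqrt{C_\mu}\,L(\sigma)\tilde C+\sqrt{C_\mu^{(\gamma)}}\big(c(\sigma)+L(\sigma)e^{\alpha T}{\mathcal N}_{\alpha,p}(u)\big)\big]|x-\bar x|^{\nu_1}$. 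For the temporal increment of $I_2$ I would take square roots in \eqref{s3-3.680} and observe that, by the very definition of $\nu_2$ in \eqref{numbers}, each exponent of $|t-\bar t|$ that occurs---namely $\nu_1$, $\tfrac{1+\nu}{2}$, $\tfrac{\min(b+1,\bar b,\tilde\alpha)}{2}$ and $\tfrac{\nu_1+\min(b,1)}{2}$---is $\ge\nu_2$, so the corresponding power is $\le C(T)|t-\bar t|^{\nu_2}$, the single cross term being absorbed by $2\sqrt{AB}\le A+B$.

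At this stage every contribution has the form $C(p,T,R)\big(|x-\bar x|^{\nu_1}+|t-\bar t|^{\nu_2}\big)$ times one of: $\bar K_0(u_0,v_0)$; $c(b)$ or $\sqrt p\,c(\sigma)$; $\big[L(b)+\sqrt p(1+\sqrt{C_\mu})L(\sigma)\big]\tilde C$; or $\big[L(b)+\sqrt p(1+\sqrt{C_\mu})L(\sigma)\big]e^{\alpha T}{\mathcal N}_{\alpha,p}(u)$. I would then substitute $\tilde C=C(R)(C_1+C_2)\exp(TC_3/2)$ from Proposition \ref{s3-3-p4}, with $C_1\le c_1(T,R)\bar K_0(u_0,v_0)$ and $C_2=c_2(T)\big(pC_\mu^{(\gamma)}\big)^{1/2}\big[c(\sigma)+L(\sigma)e^{\alpha T}{\mathcal N}_{\alpha,p}(u)\big]$, expand, and regroup: the terms proportional to $\bar K_0(u_0,v_0)$ assemble into $\mathcal{M}_1$, those proportional to $c(b)+\sqrt p\,c(\sigma)$ into $\mathcal{M}_2$, and those carrying the factor ${\mathcal N}_{\alpha,p}(u)$ into $\mathcal{M}_3\,e^{\alpha T}{\mathcal N}_{\alpha,p}(u)$, exactly as in \eqref{defmss}---the leading ``$1$'' in each $\mathcal{M}_i$ coming from the direct $I_0/I_1/I_2$ contribution and the remaining summand from the $\tilde C$-terms---after absorbing bounded $T$- and $R$-dependent factors into $C(p,T,R)$. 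This proves \eqref{s3-3.72bis}. Part 2 is then immediate: under $L(b)\ge\big(2^{12}\,3^2\,C_\mu^2\,L(\sigma)^4\big)\vee\tfrac{1}{4}$ Proposition \ref{p-s3.1} applies, so choosing $\alpha=2\sqrt{L(b)}$ and inserting \eqref{p-s3.1-e1} for ${\mathcal N}_{2\sqrt{L(b)},p}(u)$ into \eqref{s3-3.72bis} gives \eqref{s3-3.72tris}.

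The main obstacle is not any single estimate---all of them are already recorded in Propositions \ref{s3-3-p1}--\ref{s3-3-p5}---but the bookkeeping in the last two paragraphs: keeping precise track of which of $\bar K_0(u_0,v_0)$, $c(b)$, $c(\sigma)$, $L(b)$, $L(\sigma)$, $C_\mu$, $C_\mu^{(\gamma)}$, $\tilde C$, $\exp(TC_3/2)$ and ${\mathcal N}_{\alpha,p}(u)$ multiplies each term, and checking (routinely) that all the time-increment exponents generated along the way are $\ge\nu_2$ and all the space-increment ones are $\ge\nu_1$, so that the final bound indeed collapses onto $|x-\bar x|^{\nu_1}+|t-\bar t|^{\nu_2}$.
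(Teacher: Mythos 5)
Your proposal is correct and follows essentially the same route as the paper: the paper likewise splits the space--time increment into a time increment at fixed $x$ (estimated via the $I_0$, $I_1$, $I_2$ increment bounds of Propositions \ref{s3-3-p1}, \ref{s3-3-p2} and \ref{s3-3-p5}, with the cross term absorbed by $\sqrt{AB}\le\frac12(A+B)$) plus a space increment controlled by $\tilde C|x-\bar x|^{\nu_1}$ from \eqref{s3-3.38}, then substitutes $\tilde C$ and regroups into $\mathcal{M}_1,\mathcal{M}_2,\mathcal{M}_3$, and obtains Part 2 by taking $\alpha=2\sqrt{L(b)}$ and invoking Proposition \ref{p-s3.1}. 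The only (immaterial) difference is that you reassemble the spatial increment of $u$ from the separate $I_i$ bounds with \eqref{s3-3.38} plugged into the nested terms, whereas the paper applies \eqref{s3-3.38} once to the whole spatial increment.
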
 
\begin{proof}
Fix $x\in B(0;R)$ and consider the time increment $\Vert u(t,x)-u(\bar t,x)\Vert_p$, with $t,\bar{t}\in [0,T]$. 
Using the  estimates \eqref{s3-3.01}, \eqref{s3-3.05}  for the increments of $I_0$ in dimension $d= 2, 3$, respectively, 
 then \eqref{s3-3.17}, \eqref{s3-3.18} and the definition of \eqref{norm} for the  increments of $I_1$,
  and finally \eqref{s3-3.6800}  
 for the increments of $I_2$, we obtain
\begin{align}
\label{incrtime}
\Vert u(t,x)&-u(\bar t,x)\Vert_p \le C(T,R)\Big\{ \bar K_0(u_0,v_0) |t-\bar t|^{\min(\gamma_1, \gamma_2)} \notag\\
&
+\left[c(b)+L(b)  e^{T\alpha}  {\mathcal N}_{\alpha,p}(u) \right] |t-\bar t|\notag\\
& + \tilde{C} \left( \left[L(b) + \sqrt{p} \sqrt{C_\mu}  L(\sigma)\right] |t-\bar t|^{\nu_1} 
+  \sqrt{p} L(\sigma)|t-\bar t|^{\frac{1}{2} [\nu_1 + \min(b,1)]}\right)\notag\\
& +  \sqrt{p}\left[c(\sigma) + L(\sigma)  e^{T\alpha } {\mathcal N}_{\alpha,p}(u)  \right]\notag\\
&\qquad\times  
\left[|t-\bar{t}|^{\frac{1}{2} [\nu_1 + \min(b,1)]} + |t-\bar{t}|^{\frac{1+\nu}{2}} + |t-\bar{t}|^{\frac{1}{2}\min(b+1,\bar{b},\tilde{\alpha})}\right]
\Big\},
\end{align}
where the constant $\tilde{C}$ is the same as in \eqref{s3-3.38}.  Here we
have applied  the inequality $\sqrt{AB}\le \frac{1}{2}(A+B)$ to the product of constants
$A:= \tilde C L(\sigma)$ and $B:=\Big[ c(\sigma) + L(\sigma)  e^{T\alpha} {\mathcal N}_{\alpha,p}(u)\Big]$
in the last line of \eqref{s3-3.6800} (with $\alpha$ instead of $2\sqrt{L(b)}$ ). 

Since by \eqref{s3-3.38} we have $\sup_{t\in [0,T]} \|u(t,x)-u(t,\bar{x})\|_p \leq \tilde{C} |x-\bar{x}|^{\nu_1}$ for $x,\bar{x}\in B(0;R)$, we deduce that the $L^p$ norm or space-time increment $\Vert u(t,x) - u(\bar t, \bar x)\Vert_p$ is bounded from above by the sum of the left-hand side of \eqref{incrtime} and   $\tilde C |x-\bar x|^{\nu_1}$. Using the definition of $\tilde C$  and grouping terms, we obtain the inequality \eqref{s3-3.72bis}.

Part 2. follows from Proposition \ref{p-s3.1} (see  \eqref{p-s3.1-e1}). This concludes the proof.
\end{proof}
\smallskip

With an  approach similar to that used in Section \ref{s2}, from part 2. of Theorem \ref{s3-3-p6}  and the Kolmogorov continuity
 lemma (\cite[Theorem A.3.1]{Da-SS-Book}), we deduce the
uniform $L^p$ moment estimates stated in the next Proposition. 
They are essential in the proof  of existence and uniqueness 
of a global random field solution to \eqref{n3}. 
Set 
\beq
\label{KinKolmogorov}
\mathcal{K}(c(b), c(\sigma), L(b), L(\sigma)) = {\mathcal M}_1 + {\mathcal M}_2 + {\mathcal M}_3 e^{2T\sqrt{L(b)}} 
\left(\mathcal{T}_0 + \frac{c(b)}{L(b)} + \frac{c(\sigma)}{L(\sigma)}\right),
\eeq
where ${\mathcal M}_j$, $j=1,2,3$ and $\mathcal{T}_0$ are defined in \eqref{defmss} and  \eqref{p-s3.1-e2}, respectively. 
Observe that, up to a constant factor, $\mathcal{K}(c(b), c(\sigma), L(b), L(\sigma))$ equals the right-hand side
 of \eqref{s3-3.72tris}. 
\begin{prop}
\label{Kol-d}
Suppose that the hypotheses (1)--(3) of Proposition \ref{s3-3-p4} hold,
and the hypotheses {\bf(h3)} and  {\bf(h4)} on the covariance of the noise are satisfied. Let 
$\nu_1$ and $\nu_2$ be the parameters defined in \eqref{numbers}. 
Suppose that the Lipschitz coefficients $L(b)$ and $L(\sigma)$ are non null and satisfy 
$L(b)\geq \big( 2^{12} 3^2 C_\mu^2 L(\sigma)^4\big)\vee \frac{1}{4}$ and
\begin{equation} 
\label{Lipschitz-d}
\frac{\sqrt{L(b)}}{2^5 3 C_\mu L(\sigma)^2} >  \frac{1}{\nu_1}+\frac{d}{\nu_2}, \quad d= 2,3.
\end{equation}
Fix $T, R >0$. Then, for any $p\in \left(\frac{1}{\nu_1}+\frac{d}{\nu_2}, 
\sqrt{L(b)} / \big( 2^5 3 C_\mu L(\sigma)^2\big) \right]$, there exists  positive constants  $C_1$ and $C_2(p,T,R)$  such that  
\begin{equation} 
\label{s3-3-maxp}
E\Big( \sup_{(t,x)\in [0,T]\times B(0;R)} |u(t,x)|^p\Big) \leq 2^{p-1} C_1 + C_2(p,T,R) \mathcal{K}(c(b), c(\sigma), L(b), L(\sigma)),
\end{equation}
with $\mathcal{K}(c(b), c(\sigma), L(b), L(\sigma))$ defined in \eqref{KinKolmogorov}.
\end{prop}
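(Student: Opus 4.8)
\emph{Plan of proof.} The argument parallels that of Proposition~\ref{Kol-1d}, with the one-dimensional increment estimate replaced by its $d=2,3$ analogue. First I would invoke part~2 of Theorem~\ref{s3-3-p6}: under the present assumptions on $L(b),L(\sigma)$ and for $p\in\big[2,\sqrt{L(b)}/(2^5\,3\,C_\mu\,L(\sigma)^2)\big]$, the space--time increment bound \eqref{s3-3.72tris} holds, and its right-hand side is, up to a factor $C(p,T,R)$, the quantity $\mathcal{K}(c(b),c(\sigma),L(b),L(\sigma))$ of \eqref{KinKolmogorov}. Raising \eqref{s3-3.72tris} to the $p$-th power gives, for all $t,\bar t\in[0,T]$ and $x,\bar x\in B(0;R)$,
\[
E\big(|u(t,x)-u(\bar t,\bar x)|^p\big)\le \big[C(p,T,R)\,\mathcal{K}(c(b),c(\sigma),L(b),L(\sigma))\big]^p\,\big(|x-\bar x|^{\nu_1}+|t-\bar t|^{\nu_2}\big)^p,
\]
with $\nu_1,\nu_2\in(0,1]$ as in \eqref{numbers}.

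Next I would apply the Kolmogorov continuity lemma \cite[Theorem A.3.1]{Da-SS-Book} on $I\times J=[0,T]\times B(0;R)$, using the H\"older exponent $\nu_2$ in the time variable and $\nu_1$ in each of the $d$ space variables. The assumption \eqref{Lipschitz-d} ensures the interval $\big(\tfrac{1}{\nu_1}+\tfrac{d}{\nu_2},\,\sqrt{L(b)}/(2^5\,3\,C_\mu\,L(\sigma)^2)\big]$ is nonempty, and for $p$ in this interval both constraints are met simultaneously: the upper bound is the one required by Proposition~\ref{p-s3.1} (hence by Theorem~\ref{s3-3-p6}), while the lower bound exceeds the integrability threshold of the Kolmogorov lemma. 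The lemma then produces a version of $u$, still denoted $u$, with locally H\"older-continuous sample paths jointly in $(t,x)$, and its quantitative form (cf.\ \cite[(2.8.50)]{Da-SS-Book}) yields
\[
E\Big(\sup_{(t,x)\in[0,T]\times B(0;R)}|u(t,x)|^p\Big)\le 2^{p-1}\,E\Big(\sup_{x\in B(0;R)}|u(0,x)|^p\Big)+C_2(p,T,R)\,\mathcal{K}(c(b),c(\sigma),L(b),L(\sigma))^p.
\]
Since $u(0,x)=u_0(x)$ by \eqref{wave-1}, the first expectation equals $\Vert u_0\Vert_{\infty,R}^p$, which I would take as $C_1$ (up to the $p$-th power), and \eqref{s3-3-maxp} follows.

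I do not expect a genuine analytic obstacle at this stage; the remaining work is bookkeeping. The two points requiring care are: (a) verifying that the admissible window for $p$ is nonempty, which is precisely the content of \eqref{Lipschitz-d}; and (b) checking that the exponents $\nu_1,\nu_2$ defined in \eqref{numbers} indeed lie in $(0,1]$ under \textbf{(h2)}--\textbf{(h4)} and the H\"older hypotheses on $u_0,v_0$, so that the H\"older framework of \cite[Theorem A.3.1]{Da-SS-Book} applies. One should also trace through \eqref{defmss} and \eqref{p-s3.1-e2} that $C_1$ is independent of $(c(b),c(\sigma),L(b),L(\sigma))$ and that the entire dependence on these four constants is carried by $\mathcal{K}$ --- this separation is exactly what makes the estimate usable in the truncation scheme of Section~\ref{s2-3} and its $d=2,3$ counterpart, where one needs the right-hand side of \eqref{s3-3-maxp} to be $\mathrm{o}(N^p)$ as $N\to\infty$.
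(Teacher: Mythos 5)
Your argument is correct and is precisely the route the paper intends: Proposition \ref{Kol-d} is stated with proof omitted as ``analogous to that of Proposition \ref{Kol-1d}'', and your proposal reproduces exactly that analogue — the increment bound \eqref{s3-3.72tris} of Theorem \ref{s3-3-p6} raised to the $p$-th power, Kolmogorov's lemma \cite[Theorem A.3.1]{Da-SS-Book} with exponents $\nu_1$ in space and $\nu_2$ in time on $[0,T]\times B(0;R)$, the quantitative estimate \cite[(2.8.50)]{Da-SS-Book}, the choice $C_1=\Vert u_0\Vert_{\infty,R}^p$, and the nonemptiness of the $p$-interval guaranteed by \eqref{Lipschitz-d}. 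The only (cosmetic) divergence is that your final bound carries $\mathcal{K}(c(b),c(\sigma),L(b),L(\sigma))^p$ rather than $\mathcal{K}$ as written in \eqref{s3-3-maxp}; this is the natural outcome of the Kolmogorov step (compare \eqref{norm-unif-1d}) and is immaterial for the use made of the estimate in the truncation argument of Theorem \ref{wp-d}, since with $\delta<\tfrac12$ both $\mathcal{K}$ and $\mathcal{K}^p$ are $\mathrm{o}(N^p)$.
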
 

The proof is analogous to that of Proposition \ref{Kol-1d}; it  is omitted.

\subsection{Existence and uniqueness of a global solution} \label{Wp-d}
In this section, we consider the equation \eqref{n3} in spatial dimensions $d =2, 3$. We assume that the coefficients $b$ and $\sigma$ 
satisfy the hypothesis {\bf (Cs)} of Section \ref{s2-3}, thereby having superlinear growth. We also assume that $b$ dominates $\sigma$, in the terms expressed by the
condition {\bf(Cd)} below.
\medskip

\noindent {\bf (Cd)} The parameters $\delta$ and $a$ in \eqref{super-coeff} satisfy one of the properties:

(1) $\delta > 4a$;

(2) $\delta = 4a$ and  $\theta_2$ and $\sigma_2$ are such that 
$\theta_2 > 2^{12} 3^2 C_\mu^2 \sigma_2^4 \big( \frac{1}{\nu_1} + \frac{d}{\nu_2}\big)^2$, $d=2,3$,

\noindent where $C_\mu$ is defined in \eqref{cmu} and $\nu_1$, $\nu_2$ are given in \eqref{numbers}. 
\medskip

The next theorem 
is the main result of this section.
\begin{theorem}
\label{wp-d}
The hypothesis are as follows.
\begin{itemize}
\item{(i)}The initial values $u_0$ and $v_0$ are functions satisfying the hypotheses of Proposition \ref{s3-3-p1} with some H\"older exponents $\gamma_1, \gamma_2\in(0,1)$.
\item {(ii)} The coefficients $b$ and $\sigma$ satisfy {\bf (Cs)} 
and {\bf (Cd)}  with $\delta < \frac{1}{2}$. 
\item {(iii)} The covariance of the noise satisfies conditions  {\bf(h1)}, {\bf(h2)}, {\bf(h3)} and {\bf(h4)}.
\end{itemize}
\begin{enumerate}
\item For any $M>0$, there exists a random field solution to \eqref{n3} in $B(0; M)$, $\big(u(t,x),\break (t,x)\in[0,T]\times B(0; M))$. This solution is unique and satisfies
\beq
\label{bd-as}
\sup_{(t,x)\in[0,T]\times B(0; M)}\vert u(t,x)\vert < \infty, \ a.s.
\eeq
\item Suppose that the initial conditions $u_0$, $v_0$ are functions with compact support included in $B(0; \rho)$,  
for some $\rho>0$, and $b(0)=\sigma(0)=0$. Then there exists a random field solution $\big(u(t,x),\ (t,x)\in[0,T]\times \RR)$ to \eqref{n3}.
This solution is unique and satisfies
\beq
\label{bd-as-bis}
\sup_{(t,x)\in[0,T]\times B(0,\rho+T)}\vert u(t,x)\vert < \infty, \ a.s.
\eeq
Equivalently,
\beq
\label{bd-as-bis-bis}
\sup_{(t,x)\in[0,T]\times \RR^d}\vert u(t,x)\vert < \infty, \ a.s.
\eeq
\end{enumerate}

\end{theorem}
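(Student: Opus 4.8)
The plan is to follow, in spatial dimensions $d=2,3$, the scheme of the proof of Theorem~\ref{wp-d=1}, replacing its one-dimensional ingredients by the counterparts built in Section~\ref{s3}. As there, the notion of ``random field solution to \eqref{n3} in $B(0;M)$'' is the one made rigorous in Section~\ref{s-app}, and in the setting of statement~2 the support (finite propagation speed) property for compactly supported initial data with $b(0)=\sigma(0)=0$ turns \eqref{bd-as-bis} into the equivalent assertion \eqref{bd-as-bis-bis}. Thus I would first reduce both statements to producing, for an arbitrary fixed $R>0$, an almost sure bound for $\sup_{(t,x)\in[0,T]\times B(0;R)}|u(t,x)|$, and then apply this with $R=M$ for statement~1 and with $R=\rho+T$ for statement~2.

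Next I would introduce the globally Lipschitz truncations $b_N$, $\sigma_N$ of \eqref{b_N}. Since $b$ and $\sigma$ are locally Lipschitz and (i), (iii) hold, hypothesis {\bf (he)} is satisfied, so Theorem~\ref{randomfield-multi} yields a unique random field solution $u_N$ of \eqref{n3} with coefficients $b_N$, $\sigma_N$; by {\bf (Cs)}, for $N\ge2$ one has $c(b_N)=\theta_1$, $c(\sigma_N)=\sigma_1$, $L(b_N)=\theta_2(\ln(2N))^\delta$, $L(\sigma_N)=\sigma_2(\ln(2N))^a$, with $\theta_1=\sigma_1=0$ in the setting of statement~2 (this is \eqref{s2-3.1}). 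The crucial step is to verify that, for $N$ large, $L(b_N)$ and $L(\sigma_N)$ meet the hypotheses of Proposition~\ref{Kol-d}: $L(b_N)\ge\big(2^{12}\,3^2\,C_\mu^2\,L(\sigma_N)^4\big)\vee\frac{1}{4}$ and the separation inequality \eqref{Lipschitz-d}. Here $\sqrt{L(b_N)}/\big(2^5\,3\,C_\mu L(\sigma_N)^2\big)$ equals a fixed multiple of $(\ln(2N))^{\delta/2-2a}$; under {\bf (Cd)}(1) ($\delta>4a$) this tends to $\infty$, so any fixed $p>\frac{1}{\nu_1}+\frac{d}{\nu_2}$ becomes admissible once $N$ is large, whereas under {\bf (Cd)}(2) ($\delta=4a$) it equals the constant $\sqrt{\theta_2}/\big(2^5\,3\,C_\mu\sigma_2^2\big)$, which the lower bound on $\theta_2$ in {\bf (Cd)}(2) renders $>\frac{1}{\nu_1}+\frac{d}{\nu_2}$ for all $N\ge2$, and in this case one fixes such a $p$ once and for all; a similar computation gives $L(b_N)\ge2^{12}\,3^2\,C_\mu^2 L(\sigma_N)^4$, and $L(b_N)\ge\frac{1}{4}$ holds for $N$ large since $L(b_N)\to\infty$. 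Proposition~\ref{Kol-d} then provides, for this $p$ and $N$ large, the bound \eqref{s3-3-maxp}, i.e. $E\big(\sup_{[0,T]\times B(0;R)}|u_N|^p\big)\le 2^{p-1}\|u_0\|_{\infty,R}^p+C_2(p,T,R)\,\mathcal K\big(\theta_1,\sigma_1,L(b_N),L(\sigma_N)\big)$, with $\mathcal K$ as in \eqref{KinKolmogorov} and ${\mathcal M}_1,{\mathcal M}_2,{\mathcal M}_3,\mathcal T_0$ evaluated at the constants \eqref{s2-3.1}.

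Then I would assemble the global solution exactly as in Theorem~\ref{wp-d=1}: with $\tau_N:=\inf\{t>0:\sup_{|x|\le R}|u_N(t,x)|\ge N\}\wedge T$, uniqueness and the local property of the stochastic integral give $u_N=u_{N+1}$ on $\{t\le\tau_N\}$, so $(\tau_N)_N$ is a.s. nondecreasing and bounded by $T$; granting $\sup_N\tau_N=T$ a.s., the field $u$ defined by $u:=u_N$ on $\{t\le\tau_N\}$ is well posed and solves \eqref{n3} (the stochastic integral understood, as in the $d=1$ case, through its a.s. extension), and is unique since each $u_N$ is unique and $u=u_N$ on $\{t\le\tau_N\}$. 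It would then remain to show $\lim_{N\to\infty}P(\tau_N<T)=0$, which, by Chebyshev's inequality applied to the moment bound above, reduces to proving $C_2(p,T,R)\,\mathcal K\big(\theta_1,\sigma_1,L(b_N),L(\sigma_N)\big)=o(N^p)$ for the admissible exponent $p$ chosen above.

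I expect the main, indeed the only genuinely delicate, point to be this last estimate, and it is here that $\delta<\frac{1}{2}$ is decisive. Substituting \eqref{s2-3.1} into \eqref{defmss}, \eqref{ccs} and \eqref{p-s3.1-e2}, one checks that $\mathcal T_0$, $\theta_1/L(b_N)$ and $\sigma_1/L(\sigma_N)$ stay bounded in $N$, that the prefactors ${\mathcal M}_1,{\mathcal M}_2,{\mathcal M}_3$ grow only polynomially in $\ln N$, and that the exponential growth is carried by $e^{2T\sqrt{L(b_N)}}=\exp\big(2T\theta_2^{1/2}(\ln(2N))^{\delta/2}\big)$ together with $\exp\big(\frac{T}{2}C_3\big)$, where $C_3=c_3(T)\big[\theta_2^2(\ln(2N))^{2\delta}+p\,C_\mu\sigma_2^2(\ln(2N))^{2a}\big]$ comes from the Gronwall constant of Proposition~\ref{s3-3-p4} (this Gronwall step in the space variable, absent when $d=1$, is exactly why the condition $\delta<2$ of the one-dimensional case must be strengthened to $\delta<\frac{1}{2}$). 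Since $\delta<\frac{1}{2}$ forces $2\delta<1$ and $\delta/2<1$, and since $2a<\delta/2$ under {\bf (Cd)}(1) while $2a=\delta/2$ under {\bf (Cd)}(2), both exponents are $o(\ln N)$, whence $\mathcal K\big(\theta_1,\sigma_1,L(b_N),L(\sigma_N)\big)=N^{o(1)}=o(N^p)$. This gives $\lim_{N}P(\tau_N<T)=0$, so $\sup_N\tau_N=T$ a.s., and the field $u$ constructed above is the desired solution; taking $R=M$ yields statement~1 with \eqref{bd-as}, and taking $R=\rho+T$, together with the support property for the $d=2,3$ wave equation with compactly supported data recalled in Section~\ref{s-app}, yields statement~2, which completes the proof.
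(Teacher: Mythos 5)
Your proposal is correct and follows essentially the same route as the paper: truncate the coefficients as in \eqref{b_N}, invoke Theorem \ref{randomfield-multi} and Proposition \ref{Kol-d} for the truncated solutions $u_N$, use the stopping times \eqref{tauN} with the localization argument of Theorem \ref{wp-d=1}, and conclude via Chebyshev once $\mathcal{K}(c(b_N),c(\sigma_N),L(b_N),L(\sigma_N))=o(N^p)$, which is exactly where $\delta<\tfrac12$ (through the $L(b_N)^2$ term in the Gronwall constant $C_3$) enters. Your explicit verification of the admissibility of $p$ under {\bf (Cd)}(1) versus {\bf (Cd)}(2) and of the $o(N^p)$ estimate in fact spells out details the paper only sketches.
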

\begin{proof}
We refer to Section \ref{s-app} for details on the settings of the claims. 
The proof uses the same approach as in the proof of Theorem \ref{wp-d=1}. First, for $g=b, \sigma$, we consider the truncated globally Lipschitz functions $b_N$, $\sigma_N$, defined in \eqref{b_N}. The assumption {\bf(Cs)} imply that \eqref{s2-3.1} holds. Moreover, by {\bf(Cd)}, we see that the Lipschitz coefficients $L(b_N)$, $L(\sigma_N)$ satisfy the hypotheses of Proposition \ref{Kol-d}.

Let $u_N=\left(u_N(t,x), (t,x)\in[0,T]\times\RR^d\right)$ be the unique global random field solution to \eqref{n3} with coefficients $b_N$, $\sigma_N$. 
Fix $R>0$. Under the standing hypotheses, we can apply Proposition \ref{Kol-d} to the stochastic process $u_N$ to deduce that, for any 
$p\in \Big(  \frac{1}{\nu_1}+\frac{d}{\nu_2}, \frac{\sqrt{L(b_N)}}{2^5 3 C_\mu L(\sigma_N)^2}\Big]$ (and $N$ large enough if necessary), there exist  positive constants  $C_1$ and $C_2(p,T,R)$, not depending on $N$,  such that  
\begin{align} 
\label{s3-3-maxp_N}
E\Big( \sup_{(t,x)\in [0,T]\times B(0;R)} &|u_N(t,x)|^p\Big) \leq   \; 2^{p-1} C_1
+ C_2(p,T,R) \mathcal{K}(c(b_N), c(\sigma_N), L(b_N), L(\sigma_N)).
\end{align}
Here, $\mathcal{K}(c(b_N), c(\sigma_N), L(b_N), L(\sigma_N))$ is given by \eqref{KinKolmogorov}, with $c(b)$, $c(\sigma)$, $L(b)$, $L(\sigma)$ replaced by $c(b_N)$, $c(\sigma_N)$, $L(b_N)$, $L(\sigma_N)$. Recall that 
\beqn
c(b_N)=\theta_1, \quad c(\sigma_N)=\sigma_1, \quad L(b_N)=\theta_2 (\ln (2N))^\delta, \quad L(\sigma_N)=\sigma_2 (\ln (2N))^a, 
\eeqn
(see \eqref{s2-3.1}).
Because of {\bf(Cd)}, and since $\max(a,\delta) = \delta <\frac{1}{2}$, we have
\beq
\label{order}
 \mathcal{K}(c(b_N), c(\sigma_N), L(b_N), L(\sigma_N)) = o(N^p).
 \eeq

Consider the sequence of increasing stopping times defined in \eqref{tauN}. Using  \eqref{order}, we see that $\sup_N \tau_N = T$, a.s.
By the standard localization argument (see the details of the proof of Theorem \ref{wp-d=1}), we finish the proof.
\end{proof}

\section{Examples of covariance densities}
\label{s4}

In this section, we give three examples of covariances which satisfy the assumptions of section {\bf(h0)}-{\bf(h4)} of section \ref{s3} and for each of them, we identify the values of the parameter $\nu_1$, $\nu_2$ in \eqref{numbers}. For $d=3$, the same examples are studied in \cite{HHN}.
 
\subsection{Riesz kernels.}  For  $\beta\in(0,d)$, let $f_{\beta}:\RR^d \to [0,+\infty]$ be defined by 
$f_{\beta}(x) = |x|^{-\beta}$  for $x\in\rde\setminus\{0\}$, and  $f_\beta(0)=+\infty$. The inverse Fourier transform, 
is 
\beq
\label{inv-fourier}
\left(\mathcal{F}^{-1}f\right)(\zeta) = c_{d,\beta} f_{d-\beta}(\zeta)\ d\zeta, \ c_{d,\beta}= 2^{-\beta+d/2}\frac{\Gamma\left(\frac{d-\beta}{2}\right)}
{\Gamma\left(\frac{\beta}{2}\right)},
\eeq
where $\Gamma$ denotes the Euler Gamma function (see \cite[Chapter V]{Stein}). 

Let $\Lambda$ be the non-negative definite tempered distribution given by $\Lambda(dx) = f_{\beta}(x)\ dx$. According to \eqref{inv-fourier}, its spectral measure is  $\mu_\beta(d\zeta)= c_{d,\beta} f_{d-\beta}(\zeta)\ d\zeta$.
Observe that the integral $\int_{\RR^d}\frac{\mu_\beta(d\zeta)}{1+|\zeta|^2}$ converges if and only if  $\beta\in(0, 2\wedge d)$. 
 \smallskip
 
In the remaining of this section, we consider the dimensions $d=2,3$, and assume that $\beta\in(0, 2)$. From the previous discussion, we obtain that $\mu_{\beta}$ satisfies condition {\bf(h0)}.
 Since $f_\beta$ is a lower semicontinuous function, from  Remark \ref{rk4.1} we see that it satisfies {\bf(h1)}.
 
 Let $\gamma\in(0,1)$. Using polar coordinates if $d=2$ and spherical coordinates if $d=3$, we have
\beqn
\int_{\RR^d} \frac{\mu_\beta(d\zeta)}{1+ |\zeta|^{2-2\gamma}} = C_{\beta,d} \int_0^\infty \frac{\rho^{\beta-1}}{1+\rho^{2-2\gamma}} d\rho.
\eeqn
The integral on the right-hand side is finite if and only if  $\gamma < (2-\beta)/2$. Since
$|\zeta|^{2\gamma} \mu_\beta(d\zeta) = c_{d,\beta} |\zeta|^{-(d-\beta-2\gamma)} d\zeta$,
and the Fourier transform of this measure is
$g_\gamma(x)=\tilde c(\beta,d) |x|^{-(\beta +2\gamma)}$ (for some positive constant $\tilde c(\beta,d)$), if $\beta +2\gamma <d$, the function $|\zeta|^{2\gamma} \mu_\beta(d\zeta)$ is locally integrable. 
Therefore, $\mu_\beta$ satisfies the condition {\bf(h2)} for any $\gamma \in(0, (2-\beta \big)/2)$. 
\medskip

Apply the change of variable $\eta=t\zeta$ to deduce 
\begin{align*}
\int_{\rde} \left\vert\cF G(t)(\zeta)\right\vert^2\ \mu_\beta(d\zeta) = c_{d,\beta}\int_{\rde}\frac{\sin^2(t|\zeta|)}{|\zeta|^2} |\zeta|^{-d+\beta} d\zeta
= c_{d,\beta}\ t^{2-\beta} \int_{\rde} \frac{\sin^2(|\eta|)}{|\eta|^{2+d-\beta}} d\eta.
\end{align*}
Since the integral $I_{d,\beta}:= \int_{\rde} \frac{\sin^2(|\eta|)}{|\eta|^{2+d-\beta}} d\eta$ is finite, $\mu_\beta$ satisfies the condition {\bf(h3)} with $\nu=2-\beta$ and $C:=c_{d,\beta}\ I_{d,\beta}$.
\medskip


The function $f_{\beta}$ satisfies the condition  {\bf(h4)}(1)
for any $b\in (0, \min(2 -\beta,1))$. Indeed, the proof relies on \cite[Lemma 2.6, p. 10]{DSS-Memoirs} (which holds in any dimension $d\ge 1$) as follows.  Choose $b>0$ satisfying $0<\beta+b<d$. Letting $a:=d-(\beta+b)$, we have $a+b\in(0,d)$. Then, by applying Lemma 2.6 (a) in \cite{DSS-Memoirs}, we have
\begin{align*}
f_\beta(s(y+z)+h(y+z)) - f_\beta(s(y+z)+hz) &= |h|^b \int_{\rde} dw |s(y+z)+hz-hw|^{-(\beta+b)}\\
&\qquad\times\left[|w+y|^{-(d-b)}-|w|^{-(d-b)}\right].
\end{align*}
Consequenly, {\bf(h4)}(1) will be established if we prove
\begin{align}
\label{h41-r}
&\int_0^T ds \ s \int_{\rde} \int_{\rde} G(1,dy) G(1,dz)\notag\\
& \qquad\qquad \times \int_{\rde} dw \big|s(y+z)+hz-hw\big|^{-(\beta+b)}
\left\vert|w+y|^{-(d-b)}-|w|^{-(d-b)}\right\vert <\infty.
\end{align}
A small modification of the proof of Lemma 6.4 in \cite{DSS-Memoirs} shows that \eqref{h41-r} holds for  $d=2,3$, and 
for any $b$ such that $b\in (0, \min(2 -\beta,1))$.
 We refer also to \cite[Proposition 5.3, p. 383-385]{HHN} for a proof of \eqref{h41-r} 
in dimension $d=3$.
Going through the details of the proof of this proposition, 
we see that it can be extended to $d=2$, thanks to Lemma \ref{l-a-1}.
This completes the proof of the validity of {\bf(h4)}(1) for $f_\beta$, with  $b\in (0, \min(2 -\beta,1))$.
\medskip


Finally, we prove that $f_{\beta}$, satisfies the condition {\bf(h4)}(2)
 for any $\bar b\in (0,2 -\beta)$.  The proof relies on \cite[Lemma 2.6, p. 10]{DSS-Memoirs}. 
 As in the proof of {\bf(h4)}(1), we choose $\bar b>0$ satisfying $0<\beta+\bar b<d$. Letting $a:=d-(\beta+\bar b)$, we have $a+\bar b\in(0,d)$.
  Then,   Lemma 2.6 (e) in \cite{DSS-Memoirs} implies 
\begin{align*}
&f_{\beta}\big(s(y+z)+h(y+z)\big) - f_{\beta}\big(s(y+z)+h y\big) - f_{\beta}\big(s(y+z)+hz \big)
+ f_\beta\big(s(y+z)\big)\\
&\quad \le |h|^{\bar b} \int_{\rde} dw\ |y-hz|^{-(\beta+\bar b)}\\
&\qquad \quad \times\left[\vert w+hy+hz \vert^{-(d-\bar b)} - \vert w+hy\vert^{-(d-\bar b)} - \vert w+hz\vert^{-(d-\bar b)} 
+|w|^{-(d-\bar b)}\right].
\end{align*}
Consequently, {\bf(h4)}(2) will follow from
\begin{align}
\label{h42-r}
&\int_0^T ds \ s^2\int_{\rde} \int_{\rde} G(1,dy) G(1,dz)
\int_{\rde} dw  |y-hz|^{-(\beta+\bar b)} \nonumber \\
&\qquad  \times\left\vert\vert w+hy+hz \vert^{-(d-\bar b)} - \vert w+hy\vert^{-(d-\bar b)} - \vert w+hz\vert^{-(d-\bar b)} 
+|w|^{-(d-\bar b)}\right\vert<\infty.
\end{align}

With a slight modification (and simplification) of the proof of Lemma 6.5 in \cite{DSS-Memoirs}, 
we can check that \eqref{h42-r} holds for $d=2,3$ and for any $\bar b\in(0,2-\beta)$. 
Using ideas introduced in the proof of this lemma, \cite[Proposition 5.3, p. 385-386]{HHN} provides also
a proof of \eqref{h42-r} in dimension $d=3$ with $\bar b\in(0,2-\beta)$. 
Going through the details of the proof of this proposition, we see that it can be extended to $d=2$, thanks once more to Lemma \ref{l-a-1}.
Therefore, $f_\beta$ satisfies {\bf(h4)}(2)  with 
$\bar b\in (0,2 -\beta)$. 
\medskip

\noindent{\em Conclusion.}  Let $d=2,3$ and $\beta\in(0,2)$. For spatially homogeneous Gaussian noises with covariance function given by \eqref{s3.1} with $\Lambda (dx) = f_\beta(x)\ dx$, the parameters in \eqref{numbers} are $\nu_1=\nu_2=\min(\gamma, \gamma_1,\gamma_2)$, with $\gamma < \frac{2-\beta}{2}$.
Hence, from \eqref{s3-3.72bis} we deduce that, almost all sample paths of the solution to \eqref{n3} are  
locally H\"older continuous, jointly in $(t,x)$, with exponent $\theta\in]0, \min((2-\beta)/2, \gamma_1,\gamma_2[$. 
For $d=3$, this is \cite[Theorem 4.11, p. 48]{DSS-Memoirs}. 
Moreover, the critical exponent $\min((2-\beta)/2, \gamma_1,\gamma_2)$ is sharp in both dimensions, $d=2,3$ (see \cite{DSS-Memoirs}, 
\cite{DaSS-Hitting}).

\bigskip

\subsection{Bessel kernels}
\label{bessel}
For any $\kappa>0$, the Bessel kernel is the function defined by 
$\tilde{f}_\kappa(x)=\int_0^\infty w^{\frac{\kappa -d-2}{2}} e^{-w} e^{-\frac{|x|^2}{4w}} dw$ for $x\in \RR^d\setminus \{0\}$,  
$\tilde{f}_\kappa(0)= \infty$  if $0<\kappa\le d$, and $\tilde{f}_\kappa(0)= c(d,\kappa)$  if $\kappa>d$, where $0<c(d,\kappa)<\infty$ (see 
\cite[Chapter V]{Stein}) and  \cite{A}). The inverse Fourier transform is
\beq
\label{Fourier-B}
\left(\mathcal{F}^{-1}\tilde{f}_\kappa\right)(\zeta) = C_{d,\kappa} \big(1+|\zeta|^2\big)^{-\frac{\kappa}{2}}.
\eeq
Let $\Lambda$ be the measure defined by $\Lambda(dx) = \tilde{f}_\kappa(x)\ dx$. 
From \eqref{Fourier-B}, we have that the corresponding spectral measure is 
 $\tilde{\mu}_\kappa (d\zeta)=C_{d,\kappa} \big(1+|\zeta|^2\big)^{-\frac{\kappa}{2}}\ d\zeta$.
 
Throughout this section, we consider the case $d=2, 3$, and we assume $\kappa > d-2$. Our aim is to prove that 
hypotheses  {\bf (h0)}--{\bf (h4)} are satisfied.

Since $\tilde{f}_\kappa$ is lower semicontinuous, the condition {\bf (h1)} holds (see Remark \ref{rk4.1}).
 
 
Fix $\gamma\ge 0$. Using polar coordinates for $d=2$ and spherical coordinates for $d=3$, we obtain
  \beqn
 \int_{\RR^d} \frac{\tilde{\mu}_\kappa(d\zeta)}{1+|\zeta|^{2-2\gamma}} = 
 C(d,\kappa) \int_0^\infty r^{d-1} \Big( \frac{1}{1+r^2}\Big)^{\frac{\kappa}{2}} \frac{1}{1+r^{2-2\gamma}}\ dr.
 \eeqn
  The integral on the right-hand side is finite if and only if $2\gamma < \kappa -d + 2$. Take $\gamma=0$ to deduce that {\bf (h0)} holds.
Furthermore, for $\gamma \in \big(0, \min\big( \frac{\kappa -d+2}{2},  1\big)\big)$, the constant $C^{(\gamma)}_{\tilde{\mu}_\kappa}$ 
defined in \eqref{cmugamma} is finite and therefore, {\bf (h2)} holds. 

\medskip

We next check {\bf (h3)}. For  any $t>0$ we have 
\beqn
\int_{\RR^d} |{\mathcal F} G(t) (\zeta)|^2 \tilde{\mu}_\kappa(d\zeta)=
C_{d,\kappa} \int_{\RR^d} \frac{\sin^2(t|\zeta|)}{|\zeta|^2\left(1+|\zeta|^2\right)^{\frac{\kappa}{2}}} \ d\zeta  = \tilde C_{d,\kappa} \int_0^\infty r^{d-1} \frac{\sin^2(t r)}{r^2}  \big(1+r^2 \big)^{-\frac{\kappa}{2}}  dr.
\eeqn
 Splitting the last integral, for any $t\in (0,T]$ we obtain 
\begin{align*} \int_0^\infty r^{d-1} \frac{\sin^2(t r)}{r^2} \big(1+r^2 \big)^{-\frac{\kappa}{2}}  dr & \leq
\int_0^1 t^2 r^{d-1} dr + \int_1^{\frac{T}{t}} t^2 r^{d-1-\kappa} dr + \int_{\frac{T}{t}}^\infty r^{d-1} r^{-2} r^{-\kappa} dr \\
&\leq \frac{t^2}{d} + \int_1^{\frac{T}{t}} t^2 r^{d-1-\kappa} dr  + \frac{1}{\kappa -d+2} \Big(\frac{t}{T}\Big)^{\kappa -d+2}.
\end{align*} 
Furthermore, by setting $I(t): =\int_1^{\frac{T}{t}} t^2 r^{d-1-\kappa} dr$, we have
 \beqn
 I(t)\le \begin{cases}
 \frac{t^2}{d-\kappa} \Big( \frac{T}{t}\Big)^{d-\kappa},& \qquad \mbox{\rm if}\quad  d-2< \kappa <d, \\
 t^2 \ln \Big( \frac{T}{t}\Big),&  \qquad \mbox{\rm if} \quad \kappa =d, \\
  \frac{t^2}{\kappa-d} ,& \qquad \mbox{\rm if} \quad d<\kappa.
  \end{cases}
  \eeqn 
   This implies 
 \[ \int_{\RR^d} |{\mathcal F} G(t) (\zeta)|^2 \tilde{\mu}_\kappa(d\zeta) \leq C(d,\kappa,T)\;  t^\nu,\ t\in [0,T],\]
 where $C(d,\kappa,T)$ is some positive constant and $\nu < \min(2, \kappa-d+2)$.
  Therefore,  {\bf (h3)} holds with  $\nu < \min(2, \kappa-d+2)$.
 \medskip
 
 
For $d=3$, the validity of {\bf (h4)} is proved in \cite[Section 5.3]{HHN}. Going through the arguments of this reference, we see that they also hold for $d=2$. For the sake of completeness we give some details.
 
Let us first focus on {\bf (h4)}(1). 
Set
\[ I_1 = \int_0^T ds \;  s \int_{\RR^d}\int_{\RR^d} G(1,dy) G(1, dz) \big| \tilde{f}_\kappa \big( (s+h)(y+z)\big) - \tilde{f}_\kappa\big( (s+h)z+sy\big) \big|.\]
Fix $y,z\in\RR^d$, $s, h\in [0,T]$ and $b\in (0,1)$. By writing the inequality in \cite[p. 390, line 3]{HHN} with $x:= (s+h)z+sy$ and $\xi:=y$, we see that 
\begin{align*}
&\left| e^{-\frac{|(s+h) (y+z)|^2}{4w}} - e^{-\frac{|(s+h)z+sy|^2}{4w}} \right| \\
&\qquad \leq   C \, \left(\frac{h}{\sqrt{w}} \right)^b
 \int_0^1 d\lambda\ \left(e^{-\frac{|(s+h)z-(s+\lambda h)y|^2}{8w}}
+ e^{-\frac{|(s+h)(y+z)|^2}{4w}} 
+ e^{-\frac{|(s+h)z +sy|^2}{4w}}\right) .
\end{align*}
Hence, by the definition of $\tilde{f}_\kappa$,
\begin{align*}
\big| \tilde{f}_\kappa \big( (s+h)(y+z)\big) - &\tilde{f}_\kappa \big((s+h)z+sy\big)\big| \leq C h^b \int_0^1 d\lambda \int_0^\infty dw 
\; w^{\frac{\kappa -b -d-2}{2}} \, e^{-w} \\ 
& \times \left(e^{-\frac{|(s+h)z-(s+\lambda h)y|^2}{8w}}
+ e^{-\frac{|(s+h)(y+z)|^2}{4w}} + e^{-\frac{|(s+h)z +sy|^2}{4w}}\right), 
\end{align*}
and therefore,
\[ I_1 \leq C\, h^b\, \int_0^\infty dw \; w^{\frac{\kappa -b-d-2}{2}} \; e^{-w} \sum_{i=1}^3 T_i(w),\]
where
\begin{align*}
T_1(w)=&\int_0^1  d\lambda \int_0^T s \, ds \int_{\RR^d} \int_{\RR^d} G(1,dy) G(1, dz)  e^{-\frac{|(s+h)z-(s+\lambda h)y|^2}{8w}}, \\
T_2(w)=& \int_0^T s \, ds \int_{\RR^d} \int_{\RR^d} G(1,dy) G(1, dz) e^{-\frac{|(s+h)(y+z)|^2}{4w}}, \\
T_3(w)=& \int_0^T s \, ds \int_{\RR^d} \int_{\RR^d} G(1,dy) G(1, dz) e^{-\frac{|(s+h)z +sy|^2}{4w}}.
\end{align*}
Apply the change of variables $x=(s+h)z$, $\bar x = (s+\lambda h)y$, and then Parseval's identity, to deduce
\begin{align}
\label{P-arg}
&\int_{\RR^d} \int_{\RR^d} G(1,dy) G(1, dz)  e^{-\frac{|(s+h)z-(s+\lambda h)y|^2}{8w}}\notag\\
&\qquad\quad= \frac{1}{(s+h)(s+\lambda h)} \int_{\RR^d} \int_{\RR^d} G(s+h,dx) G(s+\lambda h,d\bar x) e^{-\frac{|x-\bar x|^2}{8w}}\notag\\
&\qquad\quad= \left(\frac{2}{\pi}\right)^{\frac{d}{2}}\frac{1}{(s+h)(s+\lambda h)}\int_{\RR^d} w^{\frac{d}{2}} e^{-2w |\zeta|^2} \left(\frac{\sin\big( (s+h)|\zeta|\big)}{|\zeta|}\right)
\left(\frac{\sin\big( (s+\lambda h)|\zeta|\big)}{|\zeta|}\right) d\zeta.
\end{align}
This yields
\begin{align}
 \label{T_1-bessel}
T_1(w)
&\;\le \left(\frac{2}{\pi}\right)^{\frac{d}{2}}  w^{\frac{d}{2}} \, \int_0^1 d\lambda \int_0^T ds\ \frac{s}{(s+h)^{1-\epsilon}(s+\lambda h)^{1-\epsilon}}  
\int_{\RR^d}  e^{-2w |\zeta|^2} \, |\zeta |^{2\epsilon -2} \, d\zeta , 
\end{align}
where in the last inequality, $\epsilon \in (0,1)$, and we have used the estimate $\frac{|\sin(a|x|)|}{a|x|} \leq 1$, $a>0$, which implies 
  $\frac{|\sin(a|x|)|}{a|x|}\leq\big( \frac{\sin(a|x|)}{a|x|}\big)^{1-\epsilon} 
 \leq a^{\epsilon-1} |x|^{\epsilon -1}$.

Since $\epsilon \in (0,1)$, for every $\lambda \in (0,1)$ we have $\int_0^T \frac{s}{(s+h)^{1-\epsilon}(s+\lambda h)^{1-\epsilon}} ds \leq
\int_0^T s^{-1+2\epsilon} ds <\infty$.

Applying the change of variables $\eta = \sqrt{w}\zeta$, we obtain 
\begin{align}
\label{upper-T1-bessel}
T_1(w)\leq &\;  C\; w^{\frac{d}{2}} \int_{\RR^d} e^{-2|\eta|^2} \; |\eta|^{2\epsilon -2}\; w^{1-\epsilon} \; w^{-\frac{d}{2}} d\eta \nonumber  \\
\leq & \; C w^{1-\epsilon} \int_0^\infty e^{-2 \rho^2} \; \rho^{2(\epsilon -1)}\; \rho^{d-1}\; d\rho \leq C w^{1-\epsilon}.
\end{align}
Indeed, the last integral  is finite if and only if $2\epsilon >2-d$; since $d=2,3$ and $\epsilon >0$ this constraint is
satisfied. 
Similarly, for any $\epsilon \in (0,1)$,
\[ T_2(w)+T_3(w) \leq C w^{1-\epsilon}.\]

Thus, for any $h\in [0,T]$ and $b\in(0,1)$, we have proved 
\beq
\label{I1}
I_1 \leq C h^b \int_0^\infty w^{\frac{\kappa -b-d-2}{2}}  e^{-w} w^{1-\epsilon}\ dw.
\eeq
By taking $\varepsilon$ arbitrarily close to zero, we see that this integral is finite if $b<\kappa-d+2$. Consequently, 
 {\bf (h4)}(1) is satisfied for $b<\min(\kappa -d+2, 1)$. 
\medskip


Finally, we address the validity of  {\bf (h4)}(2), by using a similar approach as for {\bf (h4)}(1). Set
\begin{align*}
I_2 = & \int_0^T ds\ s^2 \int_{\RR^d} \int_{\RR^d} G(1, dy) G(1,dz)\\
&\quad \times 
\big|  \tilde{f}_\kappa \big( (s+h)(y+z)\big) - \tilde{f}_\kappa\big( (s+h) y +sz\big)
- \tilde{f}_\kappa\big( sy+(s+h)z\big) + \tilde{f}_\kappa\big( s(y+z)\big)|.
\end{align*}
Apply the first inequality in \cite[p. 392]{HHN} with $x:=s(y+z)$, $\xi:=y$ and $\eta:=z$, to see that, for any $y,z\in\RR^d$, $s,h\in[0,T]$ and $\bar{b}\in (0,2)$,
\beqn
\left\vert e^{-\frac{|(s+h)(y+z)|^2}{4w}} - e^{-\frac{|s(y+z)+hy|^2}{4w}} - e^{-\frac{|sy+(s+h)z|^2}{4w}} + e^{-\frac{|s(y+z)|^2}{4w}}\right\vert
\le Ch^{\bar{b}} \, w^{-\frac{\bar{b}}{2}} q_{h,y,z}(s,w),
\eeqn
where 
\begin{align}
\label{def-q}
q_{h,y,z}(s,w) = \int_0^1 d\lambda \int_0^1 d\mu &\left(e^{-\frac{|(s+\lambda h) y+ (s+\mu h) z|^2}{8w}} + 
 e^{-\frac{|(s+h)(y+z) |^2}{8w}} +  e^{-\frac{|s(y+z) +  h y |^2}{4w}}\right. \notag\\
 &\left.\quad  +  e^{-\frac{|s(y+z) +  hz|^2}{4w}} +  e^{-\frac{|s(y+z)|^2}{4w}} \right) .
\end{align}
Therefore, 
\begin{align}
\label{I2}
I_2 
&\leq  C\, h^{\bar{b}} 
 \int_0^\infty dw\ w^{\frac{\kappa -\bar{b} -d-2}{2}} \, e^{-w} \int_0^T ds\  s^2
\int_{\RR^d} \int_{\RR^d} G(1, dy) G(1,dz) q_{h,y,z}(s,w)\notag\\
& = C\, h^{\bar{b}}  \int_0^\infty dw\ w^{\frac{\kappa -\bar{b} -d-2}{2}} \, e^{-w} \sum_{i=1}^5 S_i.
\end{align}
In the last expression,
\beqn
S_1= \int_0^1 d\lambda \int_0^1 d\mu \int_0^T ds\  s^2 \int_{\RR^d} \int_{\RR^d} G(1, dy) G(1,dz)e^{-\frac{|(s+\lambda h) y+ (s+\mu h) z|^2}{8w}},
\eeqn
and $S_i$, $i=2,\ldots,5$, are defined in a similar way, by taking each of the remaining exponential terms in \eqref{def-q}.

As in \eqref{P-arg}, 
\begin{align*}
& \int_{\RR^d} \int_{\RR^d} G(1, dy) G(1,dz) e^{-\frac{|(s+\lambda h) y+ (s+\mu h) z|^2}{8w}}\\
&\qquad \qquad \quad \le \left(\frac{2}{\pi}\right)^{\frac{d}{2}} w^{\frac{d}{2}} \frac{1}{(s+\lambda h)^{1-\epsilon} (s+\mu h)^{1-\epsilon}} 
\int_{\RR^d}  e^{-2w|\zeta|^2} |\zeta|^{2\epsilon -2}\ d\zeta,
\end{align*}
implying
\beqn
S_1 \le \left(\frac{2}{\pi}\right)^{\frac{d}{2}} w^{\frac{d}{2}}  \int_0^1 d\lambda \int_0^1 d\mu \int_0^T ds\
\frac{s^2}{(s+\lambda h)^{1-\epsilon} (s+\mu h)^{1-\epsilon}}\int_{\RR^d}  e^{-2w|\zeta|^2} |\zeta|^{2\epsilon -2}\ d\zeta,
\eeqn
for any $\varepsilon\in(0,1)$. 

Observe the analogy between this inequality and \eqref{T_1-bessel}. Arguing as in the analysis of the right-hand side of \eqref{T_1-bessel}, we obtain $S_1\le C(T) w^{1-\epsilon}$, and, with similar arguments, also $S_i\le C(T) w^{1-\epsilon}$ for any  $i=2,\ldots, 5$. These estimates along with \eqref{I2} yield
\beqn
 I_2 \leq  C  h^{\bar{b}} \int_0^\infty w^{\frac{\kappa -\bar{b}-d-2}{2}} e^{-w} w^{1-\epsilon}\  dw.
 \eeqn
As in \eqref{I1}, by taking $\varepsilon$ arbitrarily close to zero, we see that the last integral converges if 
$\bar b< \kappa-d+2$, thereby proving that {\bf (h4)}(2) holds with $\bar{b} < \min (\kappa -d+2 , 2)$.
\bigskip


\noindent{\em Conclusion.}  Let $d=2,3$, $\kappa>d-2$. For spatially homogeneous Gaussian noises with covariance function given by \eqref{s3.1} with $\Lambda (dx) = \tilde f_\kappa(x)\ dx$, the parameters $\nu_1$,
$\nu_2$ defined in \eqref{numbers} are
 $\nu_1 = \nu_2=\min(\gamma, \gamma_1,\gamma_2)$, with $\gamma< \min\big( \frac{\kappa -d+2}{2}, 1\big)$.
  
Thus, we deduce that almost all sample paths of the solution to \eqref{n3} are  locally H\"older continuous, jointly in $(t,x)$, 
with exponent $\theta\in\left]0, \min(\frac{\kappa-d+2}{2}, 1, \gamma_1,\gamma_2)\right[$. When $d=3$, we recover the results in \cite[p. 393]{HHN}. Whether this H\"older exponent is sharp seems to be an open question.

\bigskip 


\subsection{Fractional kernels} Let $d=2,3$ and $H=(H_i)_{1\le i\le d}$, with 
$\frac{1}{2} < H_i <1$.
Let $\bar{f}_H(x)=C_H \prod_{i=1}^d |x_i|^{2H_i-2}$, when $\prod_{i=1}^d x_i \neq 0$, where $C(H)=\prod_{i=1}^d H_i (2H_i-1)$, 
and $\bar{f}_H(x)=+\infty$ otherwise. The inverse Fourier transform of $\bar{f}_H$ is 
$\left(\mathcal{F}^{-1}\bar{f}_H\right)(\zeta) = C_H \prod_{i=1}^d |\zeta_i|^{1-2H_i}$, 
where $C_H$ is some positive constant depending only on $H$.

Consider the non-negative definite tempered distribution $\Lambda(dx) =\bar{f}_H(x) dx$, whose spectral measure is 
$\bar{\mu}_H(\zeta) = C_H \prod_{i=1}^d |\zeta_i|^{1-2H_i} \, d\zeta$. 
In this section, we prove that $\bar{\mu}_H$ satisfies the conditions {\bf (h0)}--{\bf (h4)}.

Since the function $\bar{f}_H$ is lower semi-continuous, condition {\bf(h1)} holds, by 
Remark \ref{rk4.1}. 

We next check Condition {\bf (h0)}. For any $H$ as above, we have 
\[ I=\int_{\RR^d} \frac{\bar{\mu}_H(d\zeta)}{1+|\zeta|^2} = C_H \int_{\RR^d} \frac{\prod_{i=1}^d |\zeta_i|^{1-2H_i}}{1+|\zeta|^2} d\zeta = I(d)\tilde{I}(d),\]
where using polar (resp. spherical) coordinates when $d=2$ (resp. $d=3$),  we have
\[ I(d)=\int_0^\infty r^{d-1} r^{d-2\sum_{i=1}^d H_i} (1+r^2)^{-1} dr, \]
and 
\begin{align} 			\label{J(2)}
\tilde{I}(2)= & 4 \int_0^{\frac{\pi}{2}} |\cos(\theta)|^{1-2H_1}  |\sin(\theta)|^{1-2H_2} d\theta, \\
\tilde{I}(3)=& \; 2\, \tilde{I}(2)\; \int_{0}^{\frac{\pi}{2}} |\sin (\phi)|
|\sin(\phi)|^{2-2(H_1+H_2)} |\cos(\phi)|^{1-2H_3} d\phi.		\label{J(3)}
\end{align}
The change of variable $u=\sin(\theta)$ on the interval $(0,\frac{\pi}{2})$ and the constraints on  $H_i$  imply 
\[ \tilde{I}(2)=C \int_0^1 u^{1-2H_2} (1-u^2)^{-H_1} du \leq  C \int_0^1 u^{1-2H_2} (1-u)^{-H_1} du<\infty.\]
Similarly, the change of variable $v=\sin(\phi)$ on the interval $(0,\frac{\pi}{2})$ yields
\[ \int_{0}^{\frac{\pi}{2}} 
|\sin(\phi)|^{3-2(H_1+H_2)} |\cos(\phi)|^{1-2H_3} d\phi = 2\int_0^1 v^{3-2(H_1+H_2)} (1-v^2)^{-H_3} dv <\infty;\]
hence $\tilde{I}(3)<\infty$. 

Finally, it  is easy to see that $I(d)<\infty$ if and only if $2d-1-2\sum_{i=1}^d H_i >-1$ and $2d-3-2\sum_{i=1}^d H_i <-1$. 
Since $H_i<1$, the first constraint is satisfied. The second one is equivalent to $\sum_{i=1}^d H_i > d-1$. 
Hence, if  $\sum_{i=1}^d H_i > d-1$, condition {\bf (h0)} is satisfied.

Let us now check that condition {\bf (h2)} is satisfied. Given $\gamma >0$, for 
$\tilde{I}(d)$ defined by \eqref{J(2)} and \eqref{J(3)} for $d=2, 3$, we have 
\[ \int_{\RR^d} \frac{\bar{\mu}(d\zeta)}{1+|\zeta|^{2-2\gamma}} = 
\tilde{I}(d) \int_0^\infty r^{d-1} r^{d-2\sum_{i=1}^d H_i} \big( 1+r^{2-2\gamma}\big)^{-1} dr.\]
Set
\begin{equation}			\label{kappa_bar}
\bar{\kappa}=\sum_{i=1}^d H_i -(d-1) > 0.
\end{equation}
A  computation similar to that used to check {\bf (h0)} shows that this last integral is finite if and only if 
$2d-3 -2\sum_{i=1}^d H_i+ 2\gamma<-1$,
that is $\gamma <\bar{\kappa}$, where $\bar{\kappa}$ is defined in \eqref{kappa_bar}.

To check condition {\bf (h3)}, we use similar arguments to deduce that for $t\in [0,T]$, $\tilde{I}(d)$ defined as above 
\begin{align*}
\int_{\RR^d} \big| {\mathcal F} G(t) (\zeta) \big|^2 \bar{\mu}_H(d\zeta) = & C_H \int_{\RR^d} \Big| \frac{\sin(t|\zeta|)}{|\zeta|}\Big|^2 
\prod_{i=1}^d |\zeta_i|^{1-2H_i} d\zeta \\
&= C_ H \tilde{I}(d) \int_0^\infty r^{d-1} \Big| \frac{\sin(tr)}{r}\Big|^2 r^{d-\sum_{i=1}^d H_i}  dr \\
& = C \Big[ \int_0^{t^{-1}} t^2 r^{2d-1-2\sum_{i=1}^d H_i} dr + \int_{t^{-1}}^\infty r^{d-1-2+d-2\sum_{i=1}^d H_i} dr \\
&= C\Big[ t^{2-2 (d-\sum_{i=1}^d H_i)} + t^{2\sum_{i=1}^d H_i -2d+2}\Big].
\end{align*} 
We deduce $\int_{\RR^d} \big| {\mathcal F} G(t) (\zeta) \big|^2 \bar{\mu}_H(d\zeta) \leq C t^{2\bar{\kappa}}$, that
is \eqref{h3} holds with $\nu = 2\bar{\kappa}$,  where $\bar{\kappa}$ is defined in \eqref{kappa_bar}.

The proof of {\bf (h4)} is similar to that of a similar condition in \cite{HHN}. We sketch it below for the sake of completeness.
We start with  {\bf (h4)}(1). Apply the inequality in \cite[p. 395, bottom]{HHN} to see that if $d=3$, 
\begin{align*}
|\bar{f}_H(x+hy)-\bar{f}_H(x)| \leq &  C_H \big| |x_1+hy_1|^{2H_1-2} - |x_1|^{2H_1-2}\big| |x_2+hy_2|^{2H_2-2} |x_3+h y_3|^{2H_3-2} \\
&+ |x_1|^{2H_1-2} \big| |x_2+hy_2|^{2H_2-2} - |x_1|^{2H_2-2}\big|  |x_3+h y_3|^{2H_3-2} \\
&+ |x_1|^{2H_1-2} |x_2|^{2H_2-2} \big| |x_3+hy_3|^{2H_3-2} - |x_1|^{2H_2-2}\big|,
\end{align*}
while for $d=2$
\begin{align*}
|\bar{f}_H(x+hy)-\bar{f}_H(x)| \leq & C_H \big| |x_1+hy_1|^{2H_1-2} - |x_1|^{2H_1-2}\big| |x_2+hy_2|^{2H_2-2} \\
&+  |x_1|^{2H_1-2} \big| |x_2+hy_2|^{2H_2-2} - |x_1|^{2H_2-2}\big|.
\end{align*} 

In both cases, we have $|\bar{f}_H(x+hy)-\bar{f}_H(x)|= \sum_{i=1}^d F_{d,i}(x,y,h)$. In the sequel, we only describe the computations in the
case $d=3$; the case $d=2$ is easier and dealt with in a similar way.

Using the identity (2.11) in \cite{DSS-Memoirs} for $d:=1$, $a=2H_1-1-\rho$, $b=\rho$ for some positive 
$\rho <  (2\bar{\kappa}) \wedge \min(2H_i-1;  i=1, \cdots, d)$, we deduce since $a+b=2H_1-1\in (0,1)$, 
\begin{align*}
\big| |x_1-hy_1|^{2H_1-2} - |x_1|^{2H_1-2}\big| =&\Big| \int_{\RR} dw |x_1-w|^{2H_1-2-\rho} |w+hy_1|^{\rho-1} \\
&\qquad\qquad 
- \int_{\RR} dw |x_1-w|^{2H_1-2-\rho} |w|^{\rho -1} \Big| \\
=&  \Big| \int_{\RR} h\ dw |x_1-hw|^{2H_1-2-\rho} h^{\rho-1} |w+y_1|^{\rho-1} \\
&\qquad\qquad 
- \int_{\RR} h\ dw |x_1-hw|^{2H_1-2-\rho} h^{\rho-1} |w|^{\rho -1} \Big| .
\end{align*}
Hence, changing $y_1$ into $-y_1$, we have 
$ \int_{\RR^d} \int_{\RR^d} G(1,dy) G(1,dz) F_{3,1}((s+h)z+sy,y,h) \leq h^\rho \sum_{j=1}^3 T_{1,j}(s,h)$, where
\begin{align*}
T_{1,1}(s,h)=&  \int_{\RR^d} \!\! \int_{\RR^d} G(1,dy) G(1,dz) \int_{\{|w|\leq 3\}} |(s+h)z_1+sy_1-hw|^{2H_1-2-\rho} |w-y_1|^{\rho -1}\\
&\qquad \times  |(s+h)(z_2+y_2)|^{2H_2-2} 
 |(s+h)(z_3+y_3)|^{2H_3-2} dw , \\
 T_{1,2}(s,h)=& \int_{\RR^d}\!\!  \int_{\RR^d} G(1,dy) G(1,dz) \int_{\{|w|\leq 3\}} |(s+h)z_1+sy_1-hw|^{2H_1-2-\rho} |w|^{\rho -1}\\
&\qquad \times  |(s+h)(z_2+y_2)|^{2H_2-2} 
 |(s+h)(z_3+y_3)|^{2H_3-2} dw , \\
 T_{1,3}(s,h)=&  \int_{\RR^d}\!\!  \int_{\RR^d} G(1,dy) G(1,dz) \int_{\{|w| > 3\}} |(s+h)z_1+sy_1-hw|^{2H_1-2-\rho} \\
&\qquad \times \big| |w-y_1|^{\rho -1} - |w|^{\rho -1}\big| \;  |(s+h)(z_2+y_2)|^{2H_2-2} 
 |(s+h)(z_3+y_3)|^{2H_3-2} dw.
\end{align*}
To prove an upper estimate of $T_{1,1}(s,h)$, let $\tilde{w}=w-y_1$; then using the scaling property \eqref{scaling} 
we obtain
\begin{align*}
T_{1,1} &(s,h)=   \int_{\RR^d} \!\! \int_{\RR^d} G(1,dy) G(1,dz) \int_{\{|\tilde{w}|\leq 4\}} |(s+h)(z_1+y_1)-h\tilde{w}|^{2H_1-2-\rho} |\tilde{w}|^{\rho -1}\\
&\qquad\qquad \qquad  \times  |(s+h)(z_2+y_2)|^{2H_2-2} 
 |(s+h)(z_3+y_3)|^{2H_3-2} dw , \\
 =& \frac{1}{(s+h)^2} \int_{\RR^d} \!\! \int_{\RR^d}\!\! G(s+h,dy) G(s+h,dz) \!\! \int_{\{|\tilde{w}|\leq 4\}}\!\!
  \bar{f}_{H_1-\frac{\rho}{2}, H_2,H_3}\big(y+z-(h\tilde{w},0,0)\big)  
 |\tilde{w}|^{\rho-1} d\tilde{w}\\
 =&   \frac{C}{(s+h)^2} \int_{\{|w|\leq 4\}} dw\, \int_{\RR^d}  \Big| \frac{\sin\big((s+h)|\zeta|\big)}{|\zeta|}\Big|^2 \big| e^{ihw\zeta_1}\big| |\zeta_1|^{1-2H_1+\rho}
 |\zeta_2|^{1-2H_2}  |\zeta_3|^{1-2H_3} d\zeta.
\end{align*}
Since $\frac{1}{2}<H_1-\frac{\rho}{2}<1$, the computations made to check {\bf (h3)} imply 
 \[ \int_0^T s \, T_{1,1}(s,h) ds \leq C \int_0^T (s+h)^{2\bar{\kappa}-1} ds \leq C(T) <\infty\] 
  for all $h\in [0,T]$. 

We next upper estimate $T_{1,2}(s,h)$ using once more the scaling property; this yields 
\begin{align*}
T_{1,2}&(s,h) =   \int_{\RR^d} \!\! \int_{\RR^d} G(1,dy) G(1,dz) \int_{\{|w|\leq 3\}} \left| (s+h)z_1+ sy_1-h{w}\right|^{2H_1-2-\rho} |{w}|^{\rho -1}\\
&\qquad \qquad \qquad \times  |(s+h)(z_2+y_2)|^{2H_2-2} 
 |(s+h)(z_3+y_3)|^{2H_3-2} dw , \\
 =&  \frac{1}{(s+h)^2} \int_{\RR^d} \!\! \int_{\RR^d} G(s+h,dy) G(s+h,dz) \int_{\{|w|\leq 3\}}
  \left|z_1+ \frac{s}{s+h} y_1-h{w}\right|^{2H_1-2-\rho} |{w}|^{\rho -1} \\
 & \qquad \times  |(s+h)(z_2+y_2)|^{2H_2-2} 
 |(s+h)(z_3+y_3)|^{2H_3-2} dw . 
\end{align*}
Let $\psi(y)=\big( \frac{s}{s+h} y_1,y_2,y_3\big)$ and $G^\psi(s+h)$ denote the image of the measure $G(s+h, dy)$ by $\psi$. Then
Fubini's theorem implies for $s>0$ 
\begin{align*} 
T_{1,2}&(s,h) = \frac{1}{(s+h)^2} \int_{\{|w|\leq 3\}} \!\! |w|^{\rho-1} dw \!\! \int_{\RR^d}\!\! 
\big[ G^\psi(s+h) * G(s+h)\big](dx) \\
&\qquad \qquad \qquad \times |x_1|^{2H_1-\rho-2} |x_2|^{2H_2-2}
|x_3|^{2H_3-2} \\
\leq & \; C   \frac{1}{(s+h)^2} \int_{\{|w|\leq 3\}} |w|^{\rho-1} dw \int_{\RR^d} \big| {\mathcal F}\big[  G^\psi(s+h) * G(s+h)\big](\zeta) |e^{i \zeta_1 w}|  \\
&\qquad\qquad \times| \zeta_1|^{1+\rho-2H_1}
|\zeta_2|^{1-2H_2} |\zeta_3|^{1-2H_3} d\zeta \\
\leq &\;  C  \frac{1}{(s+h)^2}\int_{\{|w|\leq 3\}} |w|^{\rho-1} dw \int_{\RR^d}  \Big| \frac{\sin\big( (s+h) |\zeta|\big)}{|\zeta|}\Big| 
\Big| \frac{\sin\big( (s+h)\big| \big( \frac{s}{s+h} \zeta_1, \zeta_2,\zeta_3\big) \big| \big)}{\big| \big(\frac{s}{s+h} \zeta_1, \zeta_2,\zeta_3\big)\big|}\Big| \\
&\qquad\qquad \times 
|\zeta_1|^{1+\rho-2H_1}
|\zeta_2|^{1-2H_2} |\zeta_3|^{1-2H_3} d\zeta \\
\leq & \; C \int_{\{|w|\leq 3\}} |w|^{\rho-1} dw \Big[ \int_0^1  r^{d-1} r^{d+\rho-2\sum_{i=1}^d H_i} dr  \\
&\qquad \qquad \qquad +
 \frac{1}{(s+h)^2}\, \frac{s+h}{s} \int_1^\infty r^{d-1}  r^{d+\rho-2-2\sum_{i=1}^d H_i} dr \Big].
\end{align*}
These integrals are convergent since $\sum_{i=1}^d H_i<d$ implies $2d+\rho-1-2\sum_{i=1}^d H_i>-1$, and since  $\rho < 2\bar{\kappa}$ implies
$2d-3+\rho-2\sum_{i=1}^d H_i<-1$. Therefore, $ \int_0^T s T_{1,2}(s,h)  ds \leq C(T)$ for all $h\in [0,T]$. 

Finally, 
\[\big| |w+y_1|^{\rho-1} - |w|^{\rho-1}\big| \leq \int_0^1 \frac{\partial}{\partial \lambda}\big( |w+\lambda y_1|^{\rho-1}\big) 
d\lambda \leq C |w|^{\rho-2}.\]
Notice that since $\rho<2H_1-1<1$, we have $\rho-2<-1$. Hence   arguments similar to that used to upper estimate $T_{1,2}(s,h)$ imply 
$\int_0^T s \, T_{1,3}(s,h) ds \leq C(T) <\infty$ for all $h\in [0,T]$. 

A similar computation implies 
\[ \int_0^T ds\, s  \int_{\RR^d} \int_{\RR^d} G(1,dy) G(1, dz) \sum_{i=1}^d F_{3,i}((s+h)z+sy,y,h) 
\leq C h^\rho.\] 
This shows that \eqref{h41} holds with $b < (2\bar{\kappa}) \wedge \min( 2 H_i-1 ; i=1, \cdots, d)$.

To prove {\bf (h4)}(2), we use the inequality
\begin{align*}
 &\big|\bar{f}_H\big( (s+h)(y+z) \big) - \bar{f}_H\big( (s+h)y+sz\big) - \bar{f}_H\big( (s+h)z+sy\big) + \bar{f}_H \big( s(y+z)\big)  \big| \\
 &\qquad  \leq \big| \bar{f}_H\big( (s+h)(y+z) \big) - \bar{f}_H\big( (s+h)z+sy\big)
\big| + \big|  \bar{f}_H\big( (s+h)y+sz\big) + \bar{f}_H\big( s(y+z)\big)  \big|.
\end{align*}
The first difference is estimated by {\bf (h4)}(1) and the second one is dealt with in a similar way. Therefore, \eqref{h42} is satisfied with
$\bar{b} < (2\bar{\kappa}) \wedge \min( 2 H_i-1 ; i=1, \cdots, d)$.
This completes the proof. 

 \noindent{\em Conclusion.} 
 Let $d=2,3$ and  $H=(H_i)_{1\le i\le d}$, where 
$\frac{1}{2} < H_i <1$. 
For spatially homogeneous Gaussian noises with covariance function given by \eqref{s3.1} with $\Lambda (dx) = f_H(x)\ dx$, 
the parameters $\nu_1$, $\nu_2$ in \eqref{numbers} are 
\beqn
\nu_1=\nu_2=\min\left(\gamma_1, \gamma_2, \bar{\kappa}, \min\left(H_i-1/2; i=1, \cdots, d\right) \right), \ 
 {\text{with}}\ \bar{\kappa} =\sum_{i=1}^d H_i -(d-1)>0.
\eeqn

As a consequence, from \eqref{s3-3.72bis} we deduce that  almost all sample paths of the solution to \eqref{n3} 
are  locally H\"older continuous, jointly in $(t,x)$, with exponent 
\beqn
\theta\in\left]0, \min\left(\gamma_1, \gamma_2, \bar\kappa, \min\left(H_i-1/2; i=1, \cdots, d\right) \right)\right[. 
\eeqn
For $d=3$, this is \cite[Theorem 6.1]{HHN}. However, following \cite[Theorem 6.2]{HHN}, the critical exponent must be $\min(\gamma_1,\gamma_2, \bar\kappa)$, and therefore the result is not optimal.

\bigskip

 
 \section{Appendix: some elements on stochastic wave equations}
 \label{s-app}
 In this section, we give some basic elements relative to the stochastic wave equations considered in this article.
 
 Throughout the section, $(u(t,x),\ (t,x)\in[0,T]\times \RR^d)$ denotes the random field solution to the stochastic wave equation in each one of the settings:
 \begin{itemize}
 \item {{\bf (s1)}}\ $d=1$, $W$ is a space-time white noise, and the conditions of Proposition \ref{Proposition II.3} hold;
 \item {{\bf (s2)}}\  $d=2,3$, $W$ is a noise white in time and coloured in space, and the hypotheses of Proposition \ref{randomfield-multi} are satisfied.
 \end{itemize}
 
 For any bounded Borel set $\mathcal{O}\subset \RR^d$ and $\varepsilon_0$,  $\mathcal{O}^{(\varepsilon_0)}$ denotes the closed $\varepsilon_0$-neighborhood of $\mathcal{O}$, that is, 
 \beqn
 \mathcal{O}^{(\varepsilon_0)} = \{z\in\RR^d, \ d(x,\mathcal{O})\le \varepsilon_0\}.
 \eeqn

  \subsection{Propagation of support}
  \label{s-app-s1}
  For the homogeneous (deterministic) wave equation there is a well-known {\em compact support property} saying that if the initial conditions have compact support then the solution in the classical sense has also compact support. The proposition below tells us that, under suitable conditions, this property extends to the stochastic wave equation in dimensions $d\in\{1,2,3\}$. As for the deterministic case, this relies on the fact that the support of the fundamental solution of the wave equation $G(t, \cdot)$, $t\in [0,T]$, is included in the closed ball $\overline{B(0; t)}$.

  \begin{prop}
 \label{app-p1} 
 Consider the two cases {\bf (s1)} and {\bf (s2)} described above. Assume that 
 \begin{itemize}
 \item{(i)} the initial conditions $u_0$, $v_0$ are functions with compact support $K\subset \RR^d$;
\item{(ii)} the coefficients $b$ and $\sigma$ satisfy $b(0)=\sigma(0)=0$.
\end{itemize}
Then, for any $t\in[0,T]$, 
\beq
\label{app.1}
u(t,x) = 0, \quad {\text{for any}}\quad x\notin K^{(t)}.
\eeq
Hence, the support of the sample paths of the solution $(u(t,x),\ (t,x)\in[0,T]\times \RR^d)$ is included in $[0,T]\times K^{(T)}$.
\end{prop}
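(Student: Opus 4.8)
The plan is to prove \eqref{app.1} by a Picard iteration argument, exploiting the finite speed of propagation encoded in the support of $G(t,\cdot)$ together with the hypothesis $b(0)=\sigma(0)=0$, which guarantees that the zero function is a fixed point of the relevant portions of the mild formulation outside the influence region of the initial data.

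First I would recall that, in both settings \textbf{(s1)} and \textbf{(s2)}, the random field solution is obtained as the $L^2(\Omega)$-limit (uniformly on $[0,T]\times\RR^d$, in the $\NN$ seminorms) of the Picard iteration scheme $u^{(0)}(t,x)=I_0(t,x)$ and
\begin{align*}
u^{(n+1)}(t,x) &= I_0(t,x) + \int_0^t ds\,[G(s)\ast b(u^{(n)}(t-s,\cdot))](x)\\
&\quad + \int_0^t\int_{\RR^d} G(t-s,x-dy)\,\sigma(u^{(n)}(s,y))\,W(ds,dy).
\end{align*}
I claim, by induction on $n$, that for each $n$ and each $t\in[0,T]$ one has $u^{(n)}(t,x)=0$ a.s.\ for every $x\notin K^{(t)}$. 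The base case follows from the explicit form of $I_0$: since $\operatorname{supp} G(t,\cdot)\subset\overline{B(0;t)}$, the convolutions $[G(t)\ast v_0](x)$ and $\frac{\partial}{\partial t}[G(t)\ast u_0](x)$ vanish whenever $B(x;t)\cap K=\emptyset$, i.e.\ whenever $x\notin K^{(t)}$ (for $d=3$ one uses the surface-measure representation \eqref{Green-3} and the formula for $\frac{d}{dt}(G(t)\ast u_0)$ quoted in the paper, whose support is likewise contained in $\overline{B(0;t)}$). For the inductive step, fix $t$ and $x\notin K^{(t)}$. In the drift term, $G(s,x-dy)$ is supported on $\{|x-y|\le s\}$; for such $y$ and $r:=t-s$ we have $|y|\ge |x|-s > (t+\mathrm{diam}\text{-bound}) $... more precisely, if $y$ lies in the support of $G(s,x-\cdot)$ then $\operatorname{dist}(y,K)\ge\operatorname{dist}(x,K)-s> t-s=r$, so $y\notin K^{(r)}$ and by the induction hypothesis $u^{(n)}(r,y)=0$, hence $b(u^{(n)}(r,y))=b(0)=0$. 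The same computation applies pointwise in $(s,y)$ inside the stochastic integral: the integrand $G(t-s,x-dy)\sigma(u^{(n)}(s,y))$ is supported on $\{|x-y|\le t-s\}$, and on that set $\operatorname{dist}(y,K)\ge\operatorname{dist}(x,K)-(t-s)>s$, so $y\notin K^{(s)}$, whence $\sigma(u^{(n)}(s,y))=\sigma(0)=0$ and the stochastic integral vanishes a.s. Therefore $u^{(n+1)}(t,x)=I_0(t,x)=0$, completing the induction.

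Passing to the limit, since $u^{(n)}(t,x)\to u(t,x)$ in $L^2(\Omega)$ for each fixed $(t,x)$, and each $u^{(n)}(t,x)=0$ a.s.\ for $x\notin K^{(t)}$, we get $u(t,x)=0$ a.s.\ for each such $(t,x)$; this gives \eqref{app.1}. The final statement about the support of sample paths then follows: by the joint (local) H\"older continuity of a suitable version of $u$ — available from Proposition \ref{Kol-1d} in case \textbf{(s1)} and from the first statement of Proposition \ref{s3-3-p6} (via \eqref{s3-3.72bis}) in case \textbf{(s2)} — the a.s.\ equality $u(t,x)=0$ valid on the dense set $\{(t,x): x\notin K^{(t)}\}$ extends by continuity to its closure, and in particular to the complement of $[0,T]\times K^{(T)}$ after noting $K^{(t)}\subset K^{(T)}$ for $t\le T$; hence a.s.\ the sample path vanishes off $[0,T]\times K^{(T)}$.

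The main obstacle is handling the stochastic integral term rigorously: one must argue that $\int_0^t\int_{\RR^d} G(t-s,x-dy)\sigma(u^{(n)}(s,y))W(ds,dy)=0$ a.s., not merely in expectation, which requires knowing that the integrand itself vanishes $ds\times dy$-a.e.\ on the stochastic cone $\{(s,y):|x-y|\le t-s\}$ — and this uses the induction hypothesis in its pointwise-a.s.\ form, together with the fact that the It\^o-type isometry (or the Burkholder–Davis–Gundy bound \eqref{s3.4}) controls the $L^p(\Omega)$-norm of the integral by an integral of the integrand against a measure supported on that cone. A minor technical point, especially for $d=3$ where $G(t,\cdot)$ is a measure, is to be careful that "supported on $\overline{B(0;t)}$" is the relevant statement for the convolution against the $I_0$ part and for the translated measures $G(t-s,x-dy)$; this is standard (the cited references \cite{Dal}, \cite{DSS-Memoirs} contain analogous support arguments) but should be stated explicitly.
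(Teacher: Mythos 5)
Your proof is correct and follows essentially the same route as the paper: induction on the Picard iterations, using the support property $\operatorname{supp} G(t,\cdot)\subset\overline{B(0;t)}$ together with the triangle-inequality propagation fact (the paper's property {\bf (PS)}) and $b(0)=\sigma(0)=0$, then passing to the limit of the iterations; your extra remarks on the a.s.\ vanishing of the stochastic integral and the continuity argument for the support of sample paths only make explicit points the paper leaves implicit.
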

 \begin{proof}
 We will extend the arguments in \cite[p. 925]{MiMo} in dimension $d=2$ to any $d\in\{1,2,3\}$.
  
 First, notice that since the mapping $[0,T]\ni t\mapsto K^{(t)}$ is increasing, the last statement is an immediate consequence of \eqref{app.1}. 
 
 Next, we prove that \eqref{app.1} holds with $u(t,x)$, replaced by $u^n(t,x)$--the $n$-th Picard iteration of $u$ defined by
 \beq
 \label{app.2}
 \begin{cases}
 u^0(t,x)&=\quad I_0(t,x),\\
 u^n(t,x)&=\quad I_0(t,x) + \int_0^t ds \int_{\RR^d} dy\ G(t-s, x-y)b(u^{n-1}(s,y))\\ 
 &\quad \quad\quad\quad  + \int_0^t\int_{\RR^d} G(t-s,x-y) \sigma(u^{n-1}(s,y))\ W(ds,dy),\ n\ge 0.
 \end{cases}
 \eeq
Indeed, fix $t\in[0,T]$ and let $n=0$. If $x\notin K^{(t)}$, we have  $|x-y|>t$ for any $y\in K$. Therefore, the integrals defining $I_0(t,x)$ (see \eqref{decom}) vanish, because of the above mentioned property on the support of $G(t, \cdot)$.

In the next induction step, we will make use of the following fact:
\smallskip

{\em {\bf (PS)} Let  $x\notin K^{(t)}$. Then for all $s\in[0,t]$ and all $y\in\RR^d$ such that $|x-y|\le t-s$, we have $y\notin K^{(s)}$}.
\smallskip

Indeed, if $x\notin K^{(t)}$ then for any $z\in\RR^d$, $|z-x|>t$. By the triangle inequality, this implies
$|z-y|\ge |z-x| - |x-y|>t-(t-s) = s$. Thus, $y\notin K^{(s)}$.

Assume that \eqref{app.1} holds with $u(t,x)$ replaced by $u^l(t,x)$, $l=0,\ldots,n-1$. We observe that the integrands in \eqref{app.2} (with $n:=n-1$) vanish if $|x-y|>t-s$. If on the contrary, $x-y\le t-s$, from {\bf (PS)} and the induction assumption, we have that $u^{(n-1)}(s,y)=0$. Hence, $b(u^{n-1}(s,y))= \sigma(u^{n-1}(s,y))=0$, by assumption (ii), which implies \eqref{app.1} with $u(t,x)$ replaced by $u^n(t,x)$.

In Propositions \ref{Proposition II.3} and \ref{randomfield-multi} the random field solutions $(u(t,x),\ (t,x)\in [0,T]\times\RR^d)$ are obtained as limits of the Picard iterations \eqref{app.2}. Thus, the {\em propagation of support property} \eqref{app.1} holds.
 \end{proof}
 
 \subsection{The solution of the stochastic wave equation restricted to a bounded domain}
 \label{s-app-s2}

 Let $d\in\{1,2,3\}$ and $T>0$.  Following the setting of \cite[Chapter 4]{DSS-Memoirs}, we consider a bounded domain $D\subset \RR^d$ and the associated past light cone
 \beqn
 D^{(T-t)} = \{z\in \RR^d: d(z,D) \le T-t\},\quad t\in[0,T].
 \eeqn
 For any $(t,x)\in[0,T]\times \RR^d$, set
\begin{align}
\label{inD}
u(t,x)1_{D^{(T-t)}}(x) & =1_{D^{(T-t)}}(x) \left([G(t) \ast v_0](x)  + \frac{\partial}{\partial t} \big[ G(t)\ast u_0\big] (x)\right)\notag\\
 &+ 1_{D^{(T-t)}}(x) \int_0^t ds\int_{\RR^d} dy\  G(t-s,x-y) b(u(s,y)1_{D^{(T-s)}}(y)) \notag\\
& +1_{D^{(T-t)}}(x) \int_0^t \int_{\RR^d} G(t-s,x-y) \sigma(u(s,y)1_{D^{(T-s)}}(y))\ W(ds,dy).
\end{align}
By the triangle inequality, if 
$x\in D^{(T-t)}$ and $x-y\in \overline{B(0;t-s)}$ then $y\in D^{(T-s)}$ for all  $0\le s\le t\le T$
(compare with property {\bf (PS)} in Section \ref{s-app-s1}). Hence, \eqref{inD} is coherent in the following sense: when $x\notin D^{(T-t)}$,  it is a trivial equation, while when $x\in D^{(T-t)}$, the stochastic process $\left(u(t,x)1_{D^{(T-t)}}(x),\ (t,x)\in [0,T]\times \RR^d\right)$ satisfies \eqref{n3} (with $(t,x)\in[0,T]\times D^{(T-t)}$).
In \cite{DSS-Memoirs}, this process is called a solution to the wave equation ``in D''.

Observe that $[0,T]\ni t\mapsto D^{(T-t)}$ is decreasing and  $\cap_{t\in[0,T]}D^{(T-t)}=D$. 

\begin{prop}
 \label{app-p2} 
 Consider the two cases {\bf (s1)}, {\bf (s2)} described above.
 
 Let $u=(u(t,x),\ (t,x)\in [0,T]\times\RR^d)$ be the respective random field solutions given in Propositions \ref{Proposition II.3} and \ref{randomfield-multi}.

Let $D\subset\RR^d$ be a bounded domain. Then, almost surely, $u(t,x) = u(t,x)1_{D^{(T-t)}}(x)$ for all $(t,x)\in[0,T]\times D$, where $\left(u(t,x)1_{D^{(T-t)}}(x),\ (t,x)\in [0,T]\times \RR^d\right)$ is the random field solution to \eqref{inD}.

Therefore, the support of the sample paths of the random field $u=(u(t,x),\ (t,x)\in [0,T]\times D)$ is included in $[0,T]\times D^{(T)}$. 
\end{prop}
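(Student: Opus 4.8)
The plan is to reduce Proposition \ref{app-p2} to two facts: a uniqueness statement for equation \eqref{n3} localized to the light cone, and the observation that the process $v(t,x) := u(t,x)1_{D^{(T-t)}}(x)$, where $u$ is the global solution of \eqref{n3} from Proposition \ref{Proposition II.3} or \ref{randomfield-multi}, satisfies \eqref{inD}. Once the latter is established, the statement that $u(t,x) = u(t,x)1_{D^{(T-t)}}(x)$ for $(t,x)\in[0,T]\times D$ follows because $D \subset D^{(T-t)}$ for every $t\in[0,T]$ (indeed $d(x,D)=0\le T-t$), so the indicator is identically $1$ there, and hence the identity is trivially true. The real content is therefore that $v$ solves \eqref{inD}, together with the uniqueness of the solution to \eqref{inD} — but since the statement we must prove is an identity involving the given $u$, uniqueness of \eqref{inD} is only needed to justify calling $v$ ``the'' random field solution to \eqref{inD}; it can be quoted from \cite[Chapter 4]{DSS-Memoirs}.

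First I would verify that $v$ satisfies \eqref{inD}. Fix $(t,x)\in[0,T]\times\RR^d$. If $x\notin D^{(T-t)}$, both sides of \eqref{inD} vanish by the factor $1_{D^{(T-t)}}(x)$, so there is nothing to prove. If $x\in D^{(T-t)}$, I multiply the mild formulation \eqref{n3} for $u$ by $1_{D^{(T-t)}}(x)=1$ and must match it term by term with \eqref{inD}. The $I_0$ term is immediate. For the drift term, the key point is that the support of $G(t-s,\cdot)$ is contained in $\overline{B(0;t-s)}$, so the spatial integral only sees $y$ with $x-y\in\overline{B(0;t-s)}$; by the triangle inequality, recalled just before the proposition, such $y$ lie in $D^{(T-s)}$, hence $1_{D^{(T-s)}}(y)=1$ and $b(u(s,y)) = b(u(s,y)1_{D^{(T-s)}}(y))$ on the effective domain of integration. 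The same reasoning applies to the stochastic integral term, using that the integrand $G(t-s,x-dy)\sigma(u(s,y)\cdots)$ is supported (in $y$, for each $s$) in $x-\overline{B(0;t-s)}\subset D^{(T-s)}$; here one should phrase the argument at the level of the stochastic integral — for $d=1$ via the explicit Walsh integral, and for $d=2,3$ via the martingale-measure / approximation construction of \cite{Dal}, \cite{Da-SS-Book} — noting that modifying the integrand on a set of the form $\{(s,y): y\notin D^{(T-s)}\}$, which is disjoint from the support of $G(t-s,x-\cdot)$, does not change the value of the integral almost surely.

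Then the final assertion about the support follows: since $u(t,x)=u(t,x)1_{D^{(T-t)}}(x)$ on $[0,T]\times D$ and, more to the point, the argument above actually shows $u(t,x)=u(t,x)1_{D^{(T-t)}}(x)$ for all $(t,x)$ with $x\in D^{(T-t)}$ while the right side manifestly vanishes for $x\notin D^{(T-t)}$, we get that for each fixed $t$ the restriction of $u$ to $D$ is, as a function on $[0,T]\times D$, supported in $D^{(T)}$ because $D^{(T-t)}\subset D^{(T)}$ for all $t\in[0,T]$ (the map $t\mapsto D^{(T-t)}$ being decreasing, with maximum $D^{(T)}$ at $t=0$). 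One small care point: the identity between $u$ and its localization holds ``almost surely, for all $(t,x)$'' rather than ``for all $(t,x)$, almost surely'', which requires using the continuous (Hölder) versions of both processes — available from Propositions \ref{Kol-1d} and \ref{Kol-d} and from \cite[Chapter 4]{DSS-Memoirs} — so that a single null set works for all space-time points simultaneously.

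The main obstacle is the careful handling of the stochastic integral term: one cannot simply say ``$b$ and $\sigma$ evaluated at $u$ or at $u\cdot 1$ agree'' pointwise in $\omega$ without first arguing that the discrepancy lives off the support of the kernel $G(t-s,x-\cdot)$, and making this rigorous for the coloured-noise stochastic integral requires invoking the isometry/extension properties of the integral in \cite{Dal}, \cite{Da-SS-Book} rather than a naive pointwise manipulation. Everything else is bookkeeping with the finite-propagation-speed property of $G$ and the triangle inequality already spelled out in the paragraph preceding the proposition.
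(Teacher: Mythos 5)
Your proposal is correct and takes essentially the same route as the paper: both rest on the finite-propagation-speed property of $G$ (the triangle-inequality fact recorded just before the proposition) combined with the existence and uniqueness results for \eqref{inD} quoted from \cite[Chapter 4]{DSS-Memoirs} and their extensions under {\bf (h0)}--{\bf (h1)} and to $d=1,2$, followed by the continuity upgrade to indistinguishability. The only difference is that you spell out the term-by-term verification (drift and stochastic integral, via the support of $G(t-s,x-\cdot)$ and the isometry for the coloured-noise integral) that $u(t,x)1_{D^{(T-t)}}(x)$ satisfies \eqref{inD}, a step the paper leaves implicit behind its appeal to uniqueness.
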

  \begin{proof}
 For $d=3$ and covariance densities of the noise belonging to a class that include Riesz kernels, the existence and uniqueness of a random field solution to \eqref{inD} is established in \cite[Chapter 4, ]{DSS-Memoirs} (see, in particular Proposition 4.3 and Theorem 4.6 there). With covariance densities satisfying the conditions ${\bf (h0)}$ and ${\bf (h1)}$ of Section \ref{s3-0}, the results are still valid (\cite{nq-2007}, \cite{dalang-quer-2011}, \cite{HHN}). 
With similar but simpler arguments, those results can be extended to $d=1$ (with space time white noise) and to $d=2$ with covariance densities satisfying the conditions ${\bf (h0)}$ and ${\bf (h1)}$ of Section \ref{s3-0}.

Because of the uniqueness of solution assertions, for all $(t,x)\in[0,T]\times D$, $u(t,x) = u(t,x)1_{D^{(T-t)}}(x)$ a.s. Actually, since the sample paths of both processes are continuous (and even locally H\"older continuous), the processes $(u(t,x),\ (t,x)\in[0,T]\times D)$ and $(u(t,x)1_{D^{(T-t)}}(x),\ (t,x)\in[0,T]\times D)$ are indistinguishable.
  \end{proof}
  


\section*{ Acknowledgements}
\noindent Annie Millet's research  has been conducted within the FP2M federation (CNRS FR 2036).\\
Marta Sanz-Sol\'e was supported by the grant MTM2015-65092-P and the research project MdM-2014-0445
from the Ministerio de Ciencia, Innovaci\'on y Universidades (Spain).\\  
Annie Millet wants to thank the University of Barcelona for the hospitality during her stay. \\
The authors gratefully acknowledge the support and excellent conditions provided by the {\em Research in Pairs} programme of the Mathematisches Forschunginstitut Oberwolfach in June 2018.\\

\end{document}